\newcommand\N{\ensuremath{\mathbb{N}}}
\newcommand\R{\ensuremath{\mathbb{R}}}
\newcommand\Z{\ensuremath{\mathbb{Z}}}
\newcommand\C{\ensuremath{\mathbb{C}}}
\renewcommand\a{\mathfrak{a}}
\newcommand{\hkl}[1]{\ensuremath{\mathcal{H} #1}}
\newcommand{\sign}{\ensuremath{\text{sgn}}}
\theoremstyle{plain}
\newtheorem{theorem}{Theorem}[section]
\newtheorem{corollary}[theorem]{Corollary}
\newtheorem{lemma}[theorem]{Lemma}
\newtheorem{proposition}[theorem]{Proposition}
\theoremstyle{definition}
\newtheorem{definition}[theorem]{Definition}
\theoremstyle{remark}
\newtheorem{remark}[theorem]{Remark}
\newtheorem{example}[theorem]{Example}
\numberwithin{equation}{section}
\dedicatory{Dedicated to Tom Koornwinder on the occasion of his 80th birthday}
\title{Multiresolution Analysis on Spectra of Hermitian Matrices}
\author{Lukas Langen and Margit R\"osler}
\address{Institut f\"ur Mathematik, Universit\"at Paderborn, Warburger Str. 100, D-33098 Paderborn, Germany}
\email{llangen@math.upb.de,  roesler@math.upb.de}
\subjclass[2020]{Primary 42C40, 43A90; Secondary 33C52, 43A85, 43A62}
\keywords{Multiresolution analysis, spherical functions, analysis on matrix spaces, Schur functions, generalized translations}
\thanks{The authors were supported by the  German Research Foundation (DFG), via the grants RO 1264/4-1 and SFB-TRR  358/1 2023-491392403.}
\begin{document}

\date{\today}

\begin{abstract}
    We establish a multiresolution analysis on the space $\text{Herm}(n)$ of $n\times n$ complex Hermitian matrices which is adapted to invariance  under conjugation by the unitary group $U(n).$ The orbits under this action are parametrized by the possible ordered spectra of Hermitian matrices, which constitute a closed Weyl chamber of type $A_{n-1}$ in $\mathbb R^n.$ The space $L^2(\text{Herm}(n))^{U(n)}$ of radial, i.e. $U(n)$-invariant $L^2$-functions on $\text{Herm}(n)$ is naturally identified with a certain weighted $L^2$-space on this chamber. 
    
    The scale spaces of our multiresolution analysis are obtained by usual dyadic dilations as well as generalized translations of a scaling function, where the generalized translation is a hypergroup translation which respects the radial geometry. We provide a concise criterion to characterize orthonormal wavelet bases and show that such bases always exist. They provide natural orthonormal bases of the space $L^2(\text{Herm}(n))^{U(n)}.$ 
    Furthermore, we show how to obtain  radial scaling functions from classical scaling functions on $\R^{n}$. Finally, generalizations related to the Cartan decompositions for general compact Lie groups are indicated.
\end{abstract}

\maketitle

\section{Introduction}

Suppose we are given a discrete subgroup $\Gamma\subseteq\text{GL}(V)\ltimes V$ of the affine group of a Euclidean vector space $V$ generated by translations coming from a lattice subgroup of $V$ and dilations that arise as integer powers of an expansive automorphism which leaves the lattice invariant. 

It is a classical problem in wavelet analysis  to determine whether there are functions $\psi^{1},\ldots, \psi^{r}\in L^{2}(V), $ often called a wavelet set, such that 
\begin{align}\label{wav_basis}
  \{\gamma.\psi^{i} : \gamma\in\Gamma,\ 1\leq i\leq r\} 
\end{align}
constitutes an orthonormal Hilbert basis of $L^{2}(V)$. Here $L^2(V)$ is with respect to the Lebesgue measure on $V$ and $\Gamma$ acts on $L^{2}(V)$ via $\gamma.\psi(x) = \psi(\gamma^{-1}x).$ 
The standard approach to this problem is to obtain the wavelet basis \eqref{wav_basis} from a multiresolution analysis (see e.g. \cite{baggett1999generalized, madych1993some, wojtaszczyk}), and in general the size $r$ of the wavelet set depends on the determinant of the expansive automorphism. In non-Euclidean settings, such as on manifolds, 
concepts of multiresolution analysis are often less natural.  There is a broad literature on wavelet analysis and multiresolution on spheres, see e.g. \cite{FNS18} and \cite{FFP16} for a more general background about wavelet methods on manifolds.  Let us also  mention \cite{Pap11} among further  concepts of non-Euclidean multiresolution analyses and \cite{OOR06} for a non-Euclidean  (continuous) wavelet transform on rectangular matrix spaces.  In \cite{roslerRauhut}  a radial multiresolution for $SO(3)$-invariant functions on $\R^{3}$ was introduced. It was  based on the natural hypergroup translation  on the orbit space $(\mathbb R^3)^{SO(3)}\cong [0,\infty[,$ and the intimate connection of the characters of this hypergroup (certain Bessel functions) to the Tchebychef polynomials of second kind played a crucial role.
For structural reasons, this concept cannot be generalized to $SO(n)$-invariant functions on $\R^{n}$
for $n >3.$ 

In the present paper,  we consider the space $V=\text{Herm}(n)$ of Hermitian $n\times n$-matrices as a Euclidean space, with the trace form $\langle X,Y\rangle = \text{tr}(XY)$ as scalar product. It is naturally acted upon by the unitary group $U(n)$ via conjugation, i.e. $(u, X)\mapsto u.X=uXu^{-1}$. Note that these transformations are orthogonal with respect to the trace form. 
In various analytic contexts, such as random matrix theory, one is interested in the space $L^{2}(\text{Herm}(n))^{U(n)}$ consisting of  functions $f\in L^2(\text{Herm}(n))$ which are invariant under this action, i.e. depend only on the eigenvalues of their argument. It 
seems natural to exploit the additional geometric invariance also in wavelet analysis, in order to obtain a discrete wavelet decomposition 
of $L^{2}(\text{Herm}(n))^{U(n)}.$ 
Identifying $U(n)$-orbits in $\text{Herm}(n)$ with their ordered spectrum via the spectral theorem will also reduce the dimension of the underlying space from $n^{2}$ to $n$. In a closely related  way, one could consider functions on the space $\text{SHerm}(n)= \{X \in \text{Herm}(n): \text{tr}\, X =0\},$ which are radial in the sense of conjugation invariance under $SU(n).$ As we  shall describe in the appendix (Section \ref{appendix}), the setting of \cite{roslerRauhut}  just corresponds to  $SU(2)$-invariant analysis on $\text{SHerm}(2)$. 

The goal of the present paper is to generalize the concepts of \cite{roslerRauhut} to higher rank, namely to  analysis on $\text{Herm}(n)$ which is radial in the sense of $U(n)$-invariance. 
We shall introduce a radial multiresolution on $\text{Herm}(n)$ and characterize radial orthonormal wavelet bases. A key ingredient will be the definition of a generalized translation operator on $L^{2}(\text{Herm}(n))^{U(n)}$, as classical translations of $U(n)$-invariant function need not to be $U(n)$-invariant again. We identify the orbit space of the action of $U(n)$ on $\text{Herm}(n)$ with the closed cone 
$$ \overline{\a_{+}}=\{x\in\R^n : x_{1}\geq \ldots\geq x_{n}\}$$
of ordered spectra of Hermitian matrices via $\{u.X: u \in U(n)\} \mapsto \sigma(X) \in \overline{\a_{+}}.$ Here $\sigma(X)$ denotes the set of eigenvalues of $X,$ ordered by size. The cone $\overline{\a_{+}}$ is a closed  Weyl chamber of type $A_{n-1}.$ It carries a natural hypergroup structure, where the convolution of point measures $\epsilon_x\ast \epsilon_y$ is a compactly supported probability measure on $\overline{\a_{+}}$ which describes the possible spectra of sums of Hermitian matrices $X+Y$ with given spectra $\sigma(X)=x, \sigma(Y)=y.$ For generic $x$ and $y$ the measure $\epsilon_x\ast \epsilon_y$ is absolutely continuous with respect to the Lebesgue measure in a certain affine plane in $\R^{n}$ with an explicit formula for the density. This is a  consequence of  results in \cite{graczykSawyer_complex},    see also the survey \cite{graczykSawyer_survey}. The generalized translation of suitable functions on $\overline{\a_{+}}$ is then given by 
$T_xf(y) \coloneqq \epsilon_x\ast \epsilon_y(f),$ and harmonic analysis of $L^2(\text{Herm}(n))^{U(n)}$ will play out as harmonic analysis on the $L^{2}$-space $L^{2}(\overline{\a_{+}}, \omega)$ of this hypergroup, where
$ \omega(x) = \prod_{i<j} |x_j-x_j|^2.$  
This is due to the fact that by the Weyl integration formula, the classical Fourier transform of $U(n)$-invariant functions on $\text{Herm}(n)$ coincides with a  Hankel transform with respect to the spherical functions of the Cartan motion group $U(n)\ltimes\text{Herm}(n),$ which are multivariate Bessel functions.  
The concept of a radial multiresolution in $\text{Herm}(n)$ will thus be that of a multiresolution in $L^2(\overline{\a_{+}}, \omega)$. The scale spaces $(V_{j})_{j\in\Z}$ are obtained by (classical) dyadic dilations from $V_{0}$, which is in turn spanned by generalized translations along lattice points of a so-called radial scaling function $\phi\in L^2(\overline{\a_{+}}, \omega)$. 
Similarly to \cite{roslerRauhut}, and in contrast to classical notions of multiresolution analysis, while still being characterized by a two-scale relation, the scaling function $\phi$ itself is not contained in $V_{0}$ and the scale spaces are not translation invariant with respect to the generalized translation.
While the authors in \cite{roslerRauhut} explicitly construct a set of orthonormal wavelets from a given multiresolution consisting (due to dimensionality reasons) of only one wavelet, this approach is not feasible in our higher-rank situation as the number of wavelets needed will grow with $n$.
Instead, we will give a concise criterion characterizing orthonormal wavelet bases of $L^2(\overline{\a_{+}},\omega)$ by checking whether a certain matrix-valued function is almost everywhere unitary. This result is analogous to a well-known criterion in classical wavelet theory (\cite{wojtaszczyk}) and will allow us to show that given a multiresolution analysis, associated orthonormal wavelet bases always exist and that they in fact consist of $2^{n}-1$ wavelets. 
 As functions on $\text{Herm}(n)$, these wavelets reflect the underlying radial symmetry and require reduced computational effort as classical multiresolution on the vector space $\text{Herm}(n)$ would need $2^{n^{2}}-1$ wavelets.
We are furthermore able to relate radial scaling functions to classical permutation-invariant scaling functions on $\R^{n}$, yielding a simple example of a radial wavelet basis analogous to the classical Shannon wavelets.

The paper is organized as follows: In Section \ref{radial} we recall facts about radial analysis on $\text{Herm}(n)$. We will introduce the generalized translation operators on $L^2(\overline{\a_{+}}, \omega)$ and study their properties. This will be crucial for Section \ref{mra}, where we introduce a radial multiresolution analysis on $\text{Herm}(n)$. In Section \ref{orthoWave}, orthonormal wavelet bases are discussed and Section \ref{scaling} then describes how to obtain radial scaling functions from classical scaling functions in $\mathbb R^{n}$. Finally, Section \ref{appendix} is dedicated to the discussion of generalizations of the previous results. In fact,
many arguments remain valid upon replacing $U(n)$ with an arbitrary connected compact Lie group, the space $V=\text{Herm}(n)$ by $\mathfrak{p}=i\text{Lie}(K)\subseteq\C\otimes_{\R}\text{Lie}(K),$ and by considering the adjoint action of $K$ on $\mathfrak{p}$. In this setting one can see the results of \cite{roslerRauhut} as a special case of rank $1$, as the situation corresponds to that of $SU(2)$ acting on $\text{SHerm}(2)$. However, in the general higher-rank case, it would  not be clear how to obtain radial scaling functions from classical scaling functions.

%----------------------------------------------------------------------------------------
%	Radial Analysis
%----------------------------------------------------------------------------------------

\section{Radial Analysis and Generalized Translation}\label{radial}

The unitary group $U(n),\ n\geq 2$ acts naturally on Hermitian matrices $\text{Herm}(n)$ by conjugation. 
By the spectral theorem, the orbit space $\text{Herm}(n)^{U(n)}$ of this action can be  (actually topologically) identified with the closed cone
$$ \overline{\a_{+}}:= \{x\in \mathbb R^n: x_1 \geq \ldots \geq x_n\}$$
via $U(n).X \mapsto \sigma(X),$ where $\sigma(X)\in \mathbb R^n$ denotes the ordered spectrum of $X$.
We note that $U(n)\ltimes \text{Herm}(n)$ is the Cartan motion group 
of the complex Lie group $G=GL_n(\mathbb C)$ with maximal compact subgroup $K=U(n),$ coming from the Cartan decomposition $\mathfrak{g}= \mathfrak{k}\oplus \mathfrak{p}$ with $\mathfrak{p} = \text{Herm}(n).$ 
$GL_n(\mathbb C)$   belongs to the so-called Harish-Chandra class (cf. e.g. \cite{gangolliVaradarajan}), but in contrast to $SL_n(\mathbb C)$ it is not semisimple. The space $\a = \{\text{diag}(x_1, \ldots, x_n): x_i \in \mathbb R\},$ which is a maximal abelian subspace of $\mathfrak p$, naturally identifies with $\mathbb R^n$ via $\text{diag}(x_1, \ldots, x_n)\mapsto (x_1, \ldots, x_n).$ Upon this identification, the set 
 $\,\a_{+} = \{x\in \mathbb R^n: x_1 > \ldots > x_n\}\,$ is the positive  Weyl chamber corresponding to the positive subsystem $$\Sigma_+= 
\{e_{i}-e_{j} : 1\leq i < j \leq n\}$$ of the root system $A_{n-1}$ in $\mathbb R^n.$ This explains the above notion for the orbit space $\text{Herm}(n)^{U(n)},$ and we shall often identify elements from $\mathbb R^n$ with diagonal matrices in the above  way.

Recall the Weyl integration formula (\cite{faraut_2008}, Thm. 10.1.4; more generally \cite{gangolliVaradarajan}, eqn.(2.4.22)) which states that
\begin{align*}
    \int_{\text{Herm}(n)} F(X) \,dX \,= \,c\int_{\overline{\a_{+}}}\int_{U(n)}F(uxu^{-1}) du\ \omega(x) dx
\end{align*}
for $F\in C_c(\text{Herm}(n))$. Here, $c>0$ is a constant independent of $F$ and
\begin{align*}
    \omega(x) = \prod_{i<j} (x_{i}-x_{j})^{2}. 
\end{align*}
We thus obtain an isometric isomorphism 
\begin{align*}
    \Phi\colon L^{2}(\text{Herm}(n))^{U(n)}\to L^{2}(\overline{\a_{+}}, c\omega),\quad F\mapsto f \text{ with  } f(\sigma(X)) := F(X).
\end{align*}
 As the action of $U(n)$ on $\text{Herm}(n)$ is via orthogonal transformations, the Euclidean Fourier transform of a radial function $F\in C_c(\text{Herm}(n))^{U(n)}$  is again radial and becomes a Hankel transform of  $f$: for $Y \in \text{Herm}(n)$ with $y = \sigma(Y),$
\begin{align*}
    \widehat F(Y) \coloneqq &\frac{1}{(2\pi)^{n^2/2}}\int_{\text{Herm}(n)} F(X) e^{-i\langle X,Y\rangle} dX \\ 
   = &\frac{c}{(2\pi)^{n^{2}/2}}\int_{\overline{\a_{+}}} f(x)J(x,-iy)\, \omega(x) dx \,\eqqcolon \mathcal Hf(y)
\end{align*}
with the Bessel functions
\begin{equation}\label{HIZ}
  J(y,z) \coloneqq \int_{U(n)}e^{\text{tr}(uyu^{-1}z)} du \quad (y, z\in \mathbb C^n).
  \end{equation}
  This is the Harish-Chandra integral for the spherical functions of 
  the Euclidean-type symmetric space $U(n)\ltimes \text{Herm}(n)/U(n)$, c.f. \cite{harish1957differential}. By the Harish-Chandra-Itzykson-Zuber formula (see \cite{itzyksonZuber} as well as \cite{mcswiggen} for a recent overview), it can also be written as  
 $$ J(y,z)\,= \,\frac{\prod_{k=1}^{n-1}k!}{\pi(y)\pi(z)}\sum_{w\in\mathcal S_{n}} \sign(w) e^{\langle wy,z\rangle}.$$

 Here $\sign(w)$ denotes the sign of $w,$ the scalar product $\langle\,.\,,\,.\,\rangle$ is extended in a bilinear way to $\mathbb C^n$, and
\begin{align*}
    \pi(z) := \prod_{i< j}(z_{i}-z_{j}) 
\end{align*}
denotes the fundamental alternating polynomial, also known as the Vandermonde determinant. Thus we get a Plancherel theorem extending 
the Hankel transform $\mathcal H$ to a unitary operator on $L^2(\overline{\a_{+}}, \omega).$ 
  Note that the Bessel function $J$ satisfies $J(\lambda x,z) = J(x,\lambda z)$ for $\lambda \in \mathbb C$ and  that it is $\mathcal S_{n}$-invariant in both arguments. We may therefore extend $\hkl f$ to an $\mathcal S_{n}$-invariant (i.e. symmetric) function on $\R^{n}$ whenever  convenient. 

Usual translations $F(\cdot - y)$ of radial functions on $\text{Herm}(n)$ need not be radial again. We therefore define generalized translations for $f\in C_c(\overline{\a_{+}})$ by averaging with respect to the $U(n)$-action:
\begin{align*}
     T_{x}f(y) \coloneqq \int_{U(n)} f(\sigma(x+uyu^{-1}))\, du \, \eqqcolon (\epsilon_x \ast \epsilon_y)(f)
      \quad (x,y \in \overline{\a_{+}}).
     \end{align*}
Here $\epsilon_x$ denotes the point measure in $x\in \mathbb R^n, $ which is identified with the corresponding diagonal matrix. Note  that $\epsilon_x \ast \epsilon_y$ defines a compactly supported 
probability measure on $\overline{\a_{+}}$ and that $\epsilon_y \ast \epsilon_x = \epsilon_x \ast \epsilon_y.$ As an immediate consequence of Fubini's theorem and the left-invariance of the Haar measure we obtain the product formula
$$(\epsilon_{x}*\epsilon_{y})J(\cdot, z) = J(x, z)J(y, z). 
$$
This just says that when considered as functions on $\mathbb R^n$, the functions $J(\,.\,,z), \, z \in \mathbb C^n$ are spherical functions of the Gelfand pair $(U(n) \ltimes \text{Herm}(n),U(n)).$ Indeed, all spherical functions are of this form, which follows e.g. from \cite{wolf2006spherical}, Theorem 4.4, and the bounded spherical functions are 
 those with $z \in  i\mathbb R^n.$
Moreover, for radial $F, G$ on $\text{Herm}(n),$ 
\begin{align}\label{adjungiertenrel} \int_{\overline{\a_{+}}} T_yf(x)g(x) \omega(x)dx = &\int_{\text{Herm}(n)} F(X+y)G(X)dX\,  \notag \\
  =\int_{\text{Herm}(n)} &F(X)G(X-y)dX\, = \, \int_{\overline{\a_{+}}} f(x)T_{\overline y}g(x) \omega(x)dx
 \end{align}
with $\overline y = -(y_n, \ldots, y_1),$ provided the integrals exist. 
In a similar way, it is checked that $ \|T_y f\|_{2, \omega} \leq \|f\|_{2, \omega}\,$. Hence the translation operators $T_y$ extend to norm-decreasing linear operators on $L^2(\overline{\a_{+}}, \omega).$ 

Together with the above product formula, relation \eqref{adjungiertenrel}  implies that for $f \in L^2(\overline{\a_{+}}, \omega), $
\begin{equation}\label{Hankelid} \mathcal H(T_yf)(x) = J(x,iy) \mathcal Hf(x).\end{equation}

Indeed, the above convolution $\epsilon_{x}*\epsilon_{y}$ of point measures $\epsilon_{x}$ and $\epsilon_y$ defines a commutative orbit hypergroup structure on $\overline{\a_{+}}$ in the sense of \cite{jewett1975spaces}, Sec. 8 (where hypergroups are called convos). The neutral element is $0\in \overline{\a_{+}}$, the involution is given by $x \mapsto \overline x$ as  defined above and $ \omega(x)dx$ is a Haar measure. For details on hypergroups related to motion groups as in the present setting, see also \cite{roslerVoitAngers}. 

In Section \ref{mra} 
we shall need some more refined information on the measures $\epsilon_{x}*\epsilon_{y}$. Indeed, a combination and adaption of results from Helgason \cite{helgasonGroups}, Graczyk, Sawyer \cite{graczykSawyer_complex}
and Graczyk, Loeb \cite{graczykLoeb} for the semisimple case shows that  for  $x, y\in \a_{+}$, the measure $\epsilon_{x}*\epsilon_{y}$ is  absolutely continuous with respect to the Lebesgue measure on a certain affine plane in $\R^{n},$ and there is an explicit  formula for the density. To make this precise, we orthogonally decompose $\R^{n}=\R^{n}_{0}\oplus \R\,\underline 1$ with
\begin{align*}
    \R^{n}_{0} = \{x\in\R^{n} : x_{1}+\ldots+x_{n}=0\}, \quad \underline 1 = (1, \ldots, 1)
\end{align*}
and denote by $x^{0}$ and $x^{1}$ the orthogonal projections of $x\in\R^{n}$ onto $\R^{n}_{0}$ and $\R\underline 1$, respectively. We further put 
$\mathbb C_0^n \coloneqq\{z\in \mathbb C^n: z_1 + \ldots +z_n=0\}$ and
$$\a_{+0}\coloneqq\R^{n}_{0}\cap \a_{+}.$$

A canonical basis of $\R^{n}_{0}$ is given by the vectors
\begin{align*}
    \alpha_{i}\coloneqq e_{i}-e_{i+1},\qquad 1\leq i\leq n-1,
\end{align*}
which form the (unique) simple system in $\Sigma_+$. Denote  the remaining elements in 
$
\Sigma_+ \setminus \{\alpha_{1},\ldots,\alpha_{n-1}\}$ by $\alpha_{n},\ldots,\alpha_{q}$. Let $q\coloneqq|\Sigma_+|=\frac 1 2 n(n-1)$. 
Following \cite{graczykLoeb, graczykSawyer_complex}, we express $\alpha_{k}=\sum_{j=1}^{n-1}a_{kj}\alpha_{j}$ for $k=n,\ldots, q$ in the basis above and define the subset $\Delta(y_{1},\ldots, y_{n-1})\subseteq\R^{q-n+1}$ via
\begin{align*}
    (y_{n},\ldots, y_{q})\in \Delta(y_{1},\ldots, y_{n-1})\quad\Longleftrightarrow\quad& y_{n},\ldots, y_{q}\geq 0\quad\text{and}\\ 
    &\ \sum_{k=n}^{q} y_{k} a_{kj}\leq y_{j},\quad j=1,\ldots, n-1.
 \end{align*}   
 Note that $a_{kj}\geq 0$ for all $k,j$, since positive roots are non-negative linear combinations of roots from the associated simple subsystem (\cite{Hum90}). Therefore 
 $\Delta(y_{1},\ldots, y_{n-1})\subseteq\R^{q-n+1}$ is compact. 

For $n \geq 3,$ we further define a function $T\colon\R^{n}_{0}\to\mathbb R$ by
\begin{align*}	
    T(y_{1}\alpha_{1}+\ldots+y_{n-1}\alpha_{n-1}) := \int_{\Delta(y_{1},\ldots, y_{n-1})}\ dy_{n}\ldots dy_{q}.
\end{align*}
If $n=2$, then we define $T$ to jump from $1$ inside $\overline{\a_{+0}}$ to $0$ on $\R^{2}_{0}\setminus\overline{\a_{+0}}$. 

In order to describe some further properties of $T$, we introduce in $\mathbb R_0^n$ the dual cone of $ \overline{\a_{+0}}$, which is given by
\begin{align*}
    {}^{+}\a = \{x\in\R^{n}_{0} : x=\sum_{j=1}^{n-1}c_{j}\alpha_{j},\,\, c_{j}> 0\} \subseteq\R^{n}_{0}.
\end{align*}

We first state some general facts which will be useful later on. For $x\in \mathbb R^n$, we denote by
\begin{equation*} C(x):=\operatorname{conv}(\mathcal{S}_{n}.x)\end{equation*}
 the convex hull of the $\mathcal S_n$-orbit of $x$.
\begin{lemma}\label{lem:eigenschaftenOrdnung}
    \begin{enumerate}
        \item If $x\in\a_{+},$ then $\,x-wx\in\overline{{}^{+}\a}$ for all $w\in\mathcal S_{n}$.
        \item  If $x\in\overline{\a_{+}}$ and $h\in C(x)$, then $x-h\in\overline{{}^{+}\a}$.
        \item  $\rho\in\a_{+0}$.
    \end{enumerate}
\end{lemma}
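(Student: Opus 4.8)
The plan is to reduce all three assertions to an explicit, type-$A$ description of the cone $\overline{{}^{+}\a}$ in terms of partial sums, after which everything becomes elementary. First I would record the following characterization: for $v\in\R^{n}_{0}$ one has $v\in\overline{{}^{+}\a}$ if and only if each partial sum $v_{1}+\dots+v_{k}$ is nonnegative for $k=1,\dots,n-1$. To see this, write $v=\sum_{j=1}^{n-1}c_{j}\alpha_{j}$ with $\alpha_{j}=e_{j}-e_{j+1}$; comparing coefficients gives $v_{k}=c_{k}-c_{k-1}$ under the conventions $c_{0}=c_{n}=0$, and telescoping yields $c_{k}=v_{1}+\dots+v_{k}$. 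Thus the defining condition $c_{j}\geq 0$ of $\overline{{}^{+}\a}$ is exactly nonnegativity of the partial sums, the last one $c_{n}=v_{1}+\dots+v_{n}$ vanishing precisely because $v\in\R^{n}_{0}$.

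With this in hand, part (1) is immediate. Since $w$ permutes coordinates, $x-wx$ has vanishing coordinate sum, so $x-wx\in\R^{n}_{0}$, and I only need to check its partial sums. For $1\leq k\leq n-1$,
\begin{equation*}
  \sum_{i=1}^{k}(x-wx)_{i}=\sum_{i=1}^{k}x_{i}-\sum_{i=1}^{k}x_{w^{-1}(i)}.
\end{equation*}
Because $x\in\a_{+}$ has non-increasing entries, $\sum_{i=1}^{k}x_{i}$ is the sum of the $k$ largest coordinates of $x$, while $\sum_{i=1}^{k}x_{w^{-1}(i)}$ is a sum of $x$ over an arbitrary $k$-element index set; hence the difference is $\geq 0$. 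This gives $x-wx\in\overline{{}^{+}\a}$, and since the argument uses only that the entries of $x$ are non-increasing, it applies verbatim to $x\in\overline{\a_{+}}$.

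For part (2) I would use that $C(x)$ is the convex hull of the orbit $\mathcal S_{n}x$ (the permutohedron), so that $h=\sum_{w}\lambda_{w}\,wx$ with $\lambda_{w}\geq 0$ and $\sum_{w}\lambda_{w}=1$. Then $x-h=\sum_{w}\lambda_{w}(x-wx)$, and since each $x-wx\in\overline{{}^{+}\a}$ by part (1) (which, as noted, is valid on the closed chamber) and $\overline{{}^{+}\a}$ is a convex cone, the convex combination again lies in $\overline{{}^{+}\a}$. Finally, part (3) is a direct computation: from $\rho=\tfrac12\sum_{i<j}(e_{i}-e_{j})$ one finds $\rho_{k}=\tfrac{n+1}{2}-k$, so $\rho_{1}>\dots>\rho_{n}$ gives $\rho\in\a_{+}$, while $\sum_{k}\rho_{k}=0$ gives $\rho\in\R^{n}_{0}$; hence $\rho\in\a_{+}\cap\R^{n}_{0}=\a_{+0}$.

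The only points requiring real care are the partial-sum characterization of $\overline{{}^{+}\a}$, which is the single structural ingredient, and the precise meaning of $C(x)$; granting that $C(x)=\operatorname{conv}(\mathcal S_{n}x)$, there is no genuine obstacle, as parts (1) and (3) are direct and (2) is pure convexity. Should one prefer to avoid the explicit permutohedron, the same conclusions follow from the dual-cone identity $\overline{{}^{+}\a}=\{v\in\R^{n}_{0}:\langle v,z\rangle\geq 0\ \forall z\in\overline{\a_{+0}}\}$ together with the standard inequality $\langle x,z\rangle\geq\langle wx,z\rangle$ for dominant $x,z$; I would nonetheless keep the partial-sum route as the primary argument, since it is self-contained and specific to type $A_{n-1}$.
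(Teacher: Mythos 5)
Your proof is correct, but it takes a different route from the paper: the paper disposes of all three statements by citing Helgason, \emph{Groups and Geometric Analysis}, Ch.~IV, Lemma~8.3 (together with the observation that $x-wx\in\R^n_0$), which is a general root-system argument, whereas you give a self-contained proof specific to type $A_{n-1}$. Your key structural ingredient — that $v\in\overline{{}^{+}\a}$ if and only if $v\in\R^n_0$ and the partial sums $v_1+\dots+v_k$ are nonnegative — is verified correctly by the telescoping identity $c_k=v_1+\dots+v_k$, and it reduces (1) to the standard majorization fact that for a non-increasing vector the sum of the first $k$ entries dominates the sum over any $k$-element index set; (2) then follows by convexity of the cone and $C(x)=\operatorname{conv}(\mathcal S_n.x)$ exactly as the paper defines it, and (3) is the same direct computation of $\rho_k=\frac{n+1}{2}-k$. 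What the citation buys the paper is brevity and immediate generality to arbitrary root systems (relevant for the generalizations sketched in the appendix); what your argument buys is transparency and independence from the reference, at the cost of being tied to the type-$A$ combinatorics. Your closing remark that one could instead use the duality $\overline{{}^{+}\a}=\{v\in\R^n_0:\langle v,z\rangle\geq 0\ \forall z\in\overline{\a_{+0}}\}$ together with $\langle x,z\rangle\geq\langle wx,z\rangle$ for dominant $x,z$ is essentially the root-system-independent argument underlying Helgason's lemma, so you have in effect recorded both proofs.
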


\begin{proof} See \cite{helgasonGroups}, Ch. IV, Lemma 8.3. and observe that $x-wx \in \mathbb R_0^n$ for $x\in \mathbb R^n.$
\end{proof}	

We shall need the following facts about $T,$ established by Graczyk and Loeb:

\begin{lemma}[\cite{graczykLoeb}, Prop. 2]\label{lem:eigenschaftenT}
    $T$ is supported in $\overline{{}^{+}\a}$ and continuous and nonnegative on $\overline{{}^{+}\a}$. If $x, y \in\overline{{}^{+}\a}$ with $y-x\in\overline{{}^{+}\a}$, then $T(x)\leq T(y)$.
\end{lemma}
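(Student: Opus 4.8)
The three assertions all follow by reading them off from the explicit polytope description of $T$, so my plan is to avoid any appeal to the group-theoretic origin of $T$ and argue purely from the defining integral. Throughout I write a point $u \in \R^{n}_{0}$ in simple-root coordinates as $u = u_{1}\alpha_{1} + \cdots + u_{n-1}\alpha_{n-1}$; then $u \in \overline{{}^{+}\a}$ exactly when $u_{1}, \ldots, u_{n-1} \geq 0$, and $T(u)$ is the Lebesgue volume of the polytope
\[
  \Delta(u_{1}, \ldots, u_{n-1}) = \Big\{(z_{n}, \ldots, z_{q}) \in \R^{q-n+1} : z_{k} \geq 0 \ \forall k, \ \textstyle\sum_{k=n}^{q} a_{kj}z_{k} \leq u_{j} \ \text{for } j = 1, \ldots, n-1 \Big\}.
\]
The one structural fact I need is that every coefficient $a_{kj}$ is nonnegative: in type $A_{n-1}$ each positive root $\alpha_{k} = e_{a} - e_{b}$ with $a < b$ equals $\alpha_{a} + \cdots + \alpha_{b-1}$, so in fact $a_{kj} \in \{0, 1\}$. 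The case $n = 2$ is degenerate, since then $q - n + 1 = 0$ and $T$ is the prescribed jump function; all three claims are then trivial, so I focus on $n \geq 3$.

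Three of the four statements are then immediate. Nonnegativity holds because $T(u)$ is a volume. For the support, if some coordinate $u_{j_{0}} < 0$, then since the $z_{k}$ and the $a_{kj_{0}}$ are all nonnegative, the left-hand side of the $j_{0}$-th defining inequality is $\geq 0 > u_{j_{0}}$ for every candidate point, so $\Delta(u_{1}, \ldots, u_{n-1})$ is empty and $T(u) = 0$; hence $T$ vanishes off $\{u_{j} \geq 0 \ \forall j\} = \overline{{}^{+}\a}$. For monotonicity, let $x, y \in \overline{{}^{+}\a}$ with $y - x \in \overline{{}^{+}\a}$; in simple-root coordinates this says $0 \leq x_{j} \leq y_{j}$ for all $j$, and enlarging the right-hand sides of the inequalities can only enlarge the feasible set, so $\Delta(x_{1}, \ldots, x_{n-1}) \subseteq \Delta(y_{1}, \ldots, y_{n-1})$ and $T(x) \leq T(y)$ upon taking volumes.

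The only genuine work is continuity, and the crux is the uniform boundedness of these polytopes. Summing the $n-1$ inequalities gives $\sum_{k=n}^{q}\big(\sum_{j=1}^{n-1} a_{kj}\big)z_{k} \leq \sum_{j=1}^{n-1} u_{j}$, where $\sum_{j} a_{kj}$ is the height of $\alpha_{k}$ and hence $\geq 1$; together with $z_{k} \geq 0$ this forces $z_{k} \leq \sum_{j} u_{j}$ for each $k$, so $\Delta(u_{1}, \ldots, u_{n-1})$ lies in a fixed box whenever $u$ ranges over a compact subset of $\overline{{}^{+}\a}$. Writing $T(u) = \int_{\R^{q-n+1}} \mathbf 1_{\{z \geq 0\}} \prod_{j=1}^{n-1} \mathbf 1_{\{\sum_{k} a_{kj}z_{k} \leq u_{j}\}}\, dz$, the integrand is then dominated by the indicator of that fixed box, and as $u \to u'$ it converges pointwise to the integrand at $u'$ off the finitely many hyperplanes $\{\sum_{k} a_{kj}z_{k} = u'_{j}\}$, which form a Lebesgue-null set. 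Dominated convergence gives continuity of $T$ — globally on $\R^{n}_{0}$ when $n \geq 3$, and in particular on $\overline{{}^{+}\a}$. I expect the boundedness estimate to be the main obstacle; everything else follows from the sign of the $a_{kj}$ and the monotone dependence of $\Delta$ on the parameters $u_{j}$.
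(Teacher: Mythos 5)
Your argument is correct, but it takes a different route from the paper: the paper gives no proof at all of this lemma and simply cites Graczyk--Loeb \cite{graczykLoeb}, Prop.~2, whereas you supply a self-contained verification directly from the polytope definition of $T$. Your proof rests on exactly the right structural input, namely that in type $A_{n-1}$ every positive root $e_a-e_b$ ($a<b$) is the $0$--$1$ combination $\alpha_a+\cdots+\alpha_{b-1}$ of simple roots, so all $a_{kj}\geq 0$; nonnegativity, the vanishing of $T$ off $\overline{{}^{+}\a}$, and the monotonicity under the dominance order $y-x\in\overline{{}^{+}\a}\Leftrightarrow x_j\leq y_j$ then follow by inspection of the defining inequalities. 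The continuity argument is also sound: summing the constraints and using that each non-simple positive root has height $\geq 1$ traps $\Delta(u_1,\ldots,u_{n-1})$ in a box of side $\sum_j u_j$, uniformly for $u$ in a compact set, and dominated convergence applies because the boundary hyperplanes $\{\sum_k a_{kj}z_k=u'_j\}$ are Lebesgue-null (the functionals are nonzero since for $n\geq 3$ every simple root occurs in some non-simple positive root). The degenerate case $n=2$ is correctly read off, with continuity understood as continuity of the restriction to $\overline{{}^{+}\a}$. What your approach buys is independence from \cite{graczykLoeb} in this specific type-$A$ setting; what the citation buys the paper is uniformity, since the Graczyk--Loeb result covers general root systems (relevant to the generalizations sketched in the appendix), where the coefficients $a_{kj}$ are still nonnegative integers but no longer $0$--$1$, though your argument would in fact survive that generalization essentially unchanged.
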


For regular arguments $x,y\in \a_{+}$, the convolution product $\epsilon_x\ast \epsilon_y$ is now described as follows in terms of $T$: 

\begin{proposition}\label{prop:dichteTranslation}
For $x, y\in \a_{+}$, the measure $\epsilon_{x}*\epsilon_{y}$ is 
       absolutely continuous with respect to the Lebesgue measure on $x^{1}+y^{1}+\overline{\a_{+0}} \subseteq x^{1}+y^{1}+\R^{n}_{0}$. The density is given by
    \begin{align*}
      k(x, y, h)=\frac{\pi(\rho)\pi(h)}{\pi(x)\pi(y)} \sum_{v, w\in\mathcal S_{n}} \operatorname{sgn}(v)\operatorname{sgn}(w)\,T(vy+wx-h).
    \end{align*}
    Here $\rho \coloneqq\frac 1 2 \sum_{i < j}(e_{i}-e_{j}) = \frac{1}{2}(n-1, n-3, \ldots, -n+1)\,\,$ denotes the Weyl vector.
\end{proposition}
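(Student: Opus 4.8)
The plan is to identify the measure $\delta_x*\delta_y$ with the absolutely continuous measure $k(x,y,h)\,dh$ by comparing spherical transforms. By the product formula, $\delta_x*\delta_y$ has spherical transform $z\mapsto J(x,z)J(y,z)$; since $\delta_x*\delta_y$ is compactly supported and the spherical transform is injective on finite measures, it suffices to check that the candidate $k(x,y,h)\,dh$ (which is again compactly supported, as only finitely many $T(vy+wx-\cdot\,)$ occur and their signed sum has bounded support) has the same transform, i.e.
\[
 \int_{\overline{\a_{+}}}J(h,z)\,k(x,y,h)\,dh \;=\; J(x,z)J(y,z)\qquad(z\in\C^n).
\]
Writing $J(x,z)=e^{\langle x^{1},z\rangle}J(x^{0},z^{0})$ shows that both sides carry the factor $e^{\langle x^{1}+y^{1},z\rangle}$, so one may reduce to the trace-zero situation on $\R^n_{0}$; this also accounts for the affine shift $x^{1}+y^{1}$ in the statement.

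Two computational inputs feed into this. First, the structure function $T$ should be read off from its definition as the $(q-n+1)$-dimensional volume of the fibre $\{c\in\R^{q}_{\ge 0}:\sum_{k}c_{k}\alpha_{k}=\eta\}$; Fubini then gives, for $\zeta$ in the interior of the dual cone,
\[
 \int_{\R^{n}_{0}}e^{-\langle\zeta,\eta\rangle}T(\eta)\,d\eta \;=\;\prod_{\alpha\in\Sigma_{+}}\frac{1}{\langle\zeta,\alpha\rangle}\;=\;\frac{1}{\pi(\zeta)},
\]
provided Lebesgue measure is taken in the simple-root coordinates defining $T$. Second, I would record that $\Xi(h):=\sum_{v,w}\sign(v)\sign(w)\,T(vy+wx-h)$, although built from individually noncompactly supported, non-alternating pieces, is itself compactly supported and \emph{alternating} in $h$: a change of variables turns its Laplace transform into a fixed constant times $\pi(\zeta)^{-1}\sum_{v,w}\sign(v)\sign(w)e^{-\langle\zeta,vy+wx\rangle}$, a product of the alternating factor $\pi(\zeta)^{-1}$ with a symmetric one, hence alternating; by injectivity of the Laplace transform so is $\Xi$.

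The assembly is then short. Insert the Harish-Chandra--Itzykson--Zuber formula \eqref{HIZ} for $J(h,z)$; the factor $\pi(h)$ cancels against the one in $k$, leaving
\[
 \int_{\overline{\a_{+}}}J(h,z)k(x,y,h)\,dh=\frac{\bigl(\prod_{k=1}^{n-1}k!\bigr)\pi(\rho)}{\pi(z)\pi(x)\pi(y)}\int_{\overline{\a_{+}}}\Bigl(\sum_{u\in\mathcal S_n}\sign(u)e^{\langle uh,z\rangle}\Bigr)\Xi(h)\,dh.
\]
The integrand is a product of two alternating functions of $h$, hence $\mathcal S_n$-symmetric, so the chamber integral is $\tfrac1{n!}$ times the integral over $\R^n_{0}$; evaluating the latter through the Laplace transform of $\Xi$ and reindexing the $u$-sum collapses it to $\tfrac{n!}{\pi(z)}\sum_{v,w}\sign(v)\sign(w)e^{\langle vy+wx,z\rangle}$. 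Using the identity $\pi(\rho)=\prod_{k=1}^{n-1}k!$ and \eqref{HIZ} once more, the right-hand side is exactly $J(x,z)J(y,z)$, and injectivity of the transform finishes the proof.

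The main obstacle is the passage from the chamber $\overline{\a_{+}}$ to all of $\R^n_{0}$: it rests \emph{entirely} on the compact support and alternation of the signed sum $\Xi$, whereas the individual volume functions $T(vy+wx-\cdot\,)$ are neither. Making this rigorous --- controlling the supports via the cone estimates of Lemma~\ref{lem:eigenschaftenOrdnung} and the monotonicity/support properties of $T$ in Lemma~\ref{lem:eigenschaftenT}, and verifying that the boundary overlaps cancel once the alternating sums are formed --- is the heart of the matter and precisely where the semisimple input of Helgason and Graczyk--Loeb enters. A secondary, purely bookkeeping difficulty is the normalization: Lebesgue measure must be taken consistently with the root coordinates used to define $T$ (differing from Euclidean measure on $\R^n_0$ by the Gram factor $\sqrt{\det(\langle\alpha_i,\alpha_j\rangle)}=\sqrt n$), which is exactly what renders the Laplace transform of $T$ equal to $1/\pi(\zeta)$ and makes the constant reduce to $\pi(\rho)$; the attendant analytic continuation of the Laplace integrals from the dual cone is harmless since $\Xi$ is compactly supported, so its transform is entire.
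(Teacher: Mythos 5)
Your proposal is essentially correct in outline but takes a genuinely different route from the paper. The paper's proof is a reduction-plus-citation: it splits $U(n)=H\ltimes SU(n)$ to factor $\delta_x\ast\delta_y=(\delta_{x^0}\ast_0\delta_{y^0})\otimes\delta_{x^1+y^1}$, relates the flat spherical functions $\psi_z=J(\cdot,iz)|_{\R^n_0}$ to the spherical functions of $SL(n,\C)/SU(n)$ via $\psi_z(x)=\tfrac{\widetilde\Delta(x)}{\pi(x)}\phi_z(e^x)$, and then quotes the explicit product formula of Graczyk--Sawyer for the complex case, patching their normalization by hand. You instead verify the density directly: the Laplace transform of $T$ in simple-root coordinates is $1/\pi(\zeta)$, the transform of the alternating sum $\Xi$ factors through the HIZ formula, and the constants collapse because $\pi(\rho)=\prod_{k=1}^{n-1}k!$; your bookkeeping of the $n!$ between the chamber integral and the full-space integral is consistent, and the identity $\int J(h,z)k(x,y,h)\,dh=J(x,z)J(y,z)$ does come out with constant exactly $1$. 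What your route buys is a built-in consistency check (setting $z=0$ forces total mass $1$, which pins down the measure normalization and would catch precisely the kind of factor errors in \cite{graczykSawyer_complex} that the paper must correct); what it costs is the analytic justification you yourself flag. Two remarks there. First, you do not actually need to establish compact support of $\Xi$ \emph{before} invoking injectivity: since the Laplace transform of each shifted $T$ converges absolutely on the tube over the open dual cone, multiplying by $e^{-\langle\zeta_0,\cdot\rangle}$ for a fixed interior point $\zeta_0$ reduces the comparison with the compactly supported measure $\delta_x\ast\delta_y$ to uniqueness of Fourier transforms of finite measures; compact support of $\Xi$ then follows a posteriori, so the appeal to Helgason/Graczyk--Loeb at this point can be avoided. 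Second, the Gram-factor issue ($\sqrt{\det(\langle\alpha_i,\alpha_j\rangle)}=\sqrt n$ for $A_{n-1}$) is real and is left ambiguous by the proposition's phrase ``Lebesgue measure on $x^1+y^1+\overline{\a_{+0}}$''; your resolution (take the measure in the root coordinates defining $T$) is the one consistent with total mass $1$. With these two points made precise, your argument is a complete and arguably more self-contained proof than the one in the paper.
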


\begin{proof}

  Throughout the paper, we denote the pushforward of a measure $\mu$ under a measurable mapping $\varphi$ by $\varphi_\ast(\mu).$ As $U(n) = H \ltimes SU(n)$ with $ H= \{a_z = \text{ diag}(z,1, \ldots,1), \,z\in  U(1)\},$ Weyl's integration formula gives
  $$(\epsilon_x \ast \epsilon_y)(f) = \int_{U(1)}\int_{SU(n)} f(\sigma(x+a_z kyk^{-1} a_z^{-1}))dk\, dz \,= \int_{SU(n)} f(x^1+y^1 + \sigma(x^0+ ky^0k^{-1}))dk$$
  for $x,y \in \overline{\a_{+}}$. This shows that 
     \begin{align*}
         \epsilon_{x}*\epsilon_{y}=[\xi\mapsto \xi + x^{1}+y^{1}]_{*}(\epsilon_{x^{0}}*_0\epsilon_{y^0}),
    \end{align*}
    where
    \begin{align*}
        (\epsilon_{x^{0}}*_0\epsilon_{y^{0}})(g) \coloneqq \int_{SU(n)} g(\sigma(x^{0}+ky^{0}k^{-1})) dk
    \end{align*}
    for $g\in C(\overline{\a_{+0}})$.
The measure $\epsilon_{x^{0}}*_0\epsilon_{y^{0}}$ is a compactly supported probability measure on $\overline{\a_{+0}}$ which describes the orbit hypergroup convolution on the space $\text{SHerm}(n)^{SU(n)},$ where $\text{SHerm}(n)$ are the Hermitian matrices of trace $0$, acted upon by $SU(n)$ via conjugation. Observe that for $z\in \mathbb C_0^n,$
$$ J(x^0, z) = \int_{SU(n)} e^{\text{tr}(kx^0k^{-1}z)}dk$$ 
and 
 \begin{align*}
        (\epsilon_{x^{0}}*_0\epsilon_{y^{0}})(J(\cdot, z)) = J(x^{0},z)J(y^{0},z).
    \end{align*}
       The functions  $\psi_{z}=J(\cdot, iz)|_{\R^{n}_{0}},\ z\in\C^{n}_{0}$ are just the spherical functions of  the Cartan motion group $SU(n)\ltimes \text{SHerm}(n).$ They are related to the spherical functions $\phi_{z}$ of the Riemannian symmetric space $SL(n, \C)/SU(n)$ of complex type via
    \begin{align*}
        \psi_{z}(x) = \frac{\widetilde\Delta(x)}{\pi(x)}\phi_{z}(e^{x})
    \end{align*}
    with $$\widetilde\Delta(x)=\prod_{i< j}\sinh(x_{i}-x_{j}),$$ see \cite{helgasonGroups}, Ch. IV, Prop. 4.10. and Ch. II, Thm. 3.15,  or the nice presentation in  \cite{BSO05}, Sect. 9 (where the definition of $\rho$ differs by a factor $2$ from ours).   Now the claim follows from \cite{graczykSawyer_complex}, Thm. 2.1 and Prop. 3.1, which  contain an explicit expression for the density in the product formula of the $\phi_{z}$ and thus for $\epsilon_{x^{0}}*_0\epsilon_{y^{0}}$. 
    \end{proof}
    
    \begin{remark}
   Note that in \cite{graczykSawyer_complex}, the authors  work with  $\mathcal S_{n}$-invariant measures on $\a= \R_0^{n},$
     while we work with  measures supported in $\overline{\a_{+0}}$. This results in a factor $1/|\mathcal S_n|$ in \cite{graczykSawyer_complex}, Prop. 3.1. We further note that in \cite{graczykSawyer_complex} the authors missed a factor $2^{|\Sigma_+|}= 2^{\frac 1 2 n(n-1)}$ in equation (3), which  
     leads to a missing factor $2^{-|\Sigma_+|}$ in Prop. 3.1. 
     %We fix this in our formulation by denoting $\rho$ as the half-sum of roots not weighted with their multiplicities.
\end{remark}

\begin{remark} Note that with respect to the decomposition  $\mathbb R^n = \mathbb R_0^n \oplus \mathbb R\underline 1$ we 
can write $\epsilon_x\ast\epsilon_y$ as a product measure:
 $$ \epsilon_x\ast\epsilon_y  = (\epsilon_{x^0}\ast_0\epsilon_{y^0})\otimes \epsilon_{x^1 + y^1}\,.$$	
\end{remark}

%For $x\in\R^{n}$ we define $C(x)=\text{conv}(\mathcal S_{n}. x)$ as the convex hull of the $\mathcal S_n$-orbit of $x$. 
\begin{lemma}\label{supp}
    Suppose $x, y\in\a_{+}$ such that $y+C(x)\subseteq\a_{+}$. Then 
    $$
        \operatorname{supp}\,(\epsilon_{x}*\epsilon_{y}) \subseteq \,y+C(x).         
         $$
    \end{lemma}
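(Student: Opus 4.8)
The plan is to read the support straight off the definition $\delta_x\ast\delta_y(f)=\int_{U(n)}f(\sigma(x+uyu^{-1}))\,du$, which shows that $\operatorname{supp}(\delta_x\ast\delta_y)$ is the closure of $\{\sigma(X+uYu^{-1}):u\in U(n)\}$, where $X,Y$ are the diagonal matrices with $\sigma(X)=x$ and $\sigma(Y)=y$. The statement then reduces to the purely matrix-theoretic fact that the ordered spectrum of a sum of two Hermitian matrices lies in the spectrum of one summand plus the convex hull of the $\mathcal S_n$-orbit of the spectrum of the other. This is the geometric form of the Lidskii--Wielandt inequalities: for Hermitian $A,B$ one has $\sigma(A+B)\in\sigma(A)+\text{conv}(\mathcal S_n.\sigma(B))$. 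Applying this with $A=uYu^{-1}$ (of spectrum $y$) and $B=X$ (of spectrum $x$) gives $\sigma(X+uYu^{-1})\in y+C(x)$ for every $u\in U(n)$. Since $C(x)=\text{conv}(\mathcal S_n.x)$ is compact, hence closed, the inclusion passes to the closure and yields $\operatorname{supp}(\delta_x\ast\delta_y)\subseteq y+C(x)$. The hypothesis $y+C(x)\subseteq\a_+$ is not strictly needed for the inclusion itself; it guarantees in addition that the support lands in the open regular chamber, which is what makes the lemma useful in Section~\ref{mra}.

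Should one prefer to argue within the machinery already set up, I would instead use the explicit density from Proposition~\ref{prop:dichteTranslation}. First I would reduce to the trace-zero case: by the remark following Proposition~\ref{prop:dichteTranslation} one has $\delta_x\ast\delta_y=(\delta_{x^0}\ast_0\delta_{y^0})\otimes\delta_{x^1+y^1}$, while $wx=wx^0+x^1$ shows $C(x)=x^1+C(x^0)$ and hence $y+C(x)=(x^1+y^1)+(y^0+C(x^0))$; so it suffices to prove $\operatorname{supp}(\delta_{x^0}\ast_0\delta_{y^0})\subseteq y^0+C(x^0)$ inside $\R^{n}_{0}$. There the density is the alternating sum $\sum_{v,w}\sign(v)\sign(w)\,T(vy^0+wx^0-\cdot)$ up to the nonvanishing prefactor $\pi(\rho)\pi(h)/(\pi(x^0)\pi(y^0))$. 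Since $T$ is supported in $\overline{{}^{+}\a}$ by Lemma~\ref{lem:eigenschaftenT}, the density vanishes outside $\bigcup_{v,w}(vy^0+wx^0-\overline{{}^{+}\a})$; pairing against a dominant $\xi$ and using $\langle\xi,vy^0\rangle\le\langle\xi,y^0\rangle$ and $\langle\xi,wx^0\rangle\le\langle\xi,x^0\rangle$ already confines the support to the crude cone $x^0+y^0-\overline{{}^{+}\a}$.

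The main obstacle in this second route is exactly the passage from the crude cone $x^0+y^0-\overline{{}^{+}\a}$ to the permutohedron $y^0+C(x^0)$: one must show that the alternating $T$-sum cancels at every dominant $h$ lying outside $y^0+C(x^0)$. I expect to control this delicate combinatorial-geometric cancellation by grouping the pairs $(v,w)$ related by the reflection fixing the relevant facet of the permutohedron and invoking the monotonicity $T(a)\le T(b)$ for $b-a\in\overline{{}^{+}\a}$ from Lemma~\ref{lem:eigenschaftenT}; here the hypothesis $y+C(x)\subseteq\a_+$ enters, keeping all shifted chambers $y^0+wx^0$ regular so that these cancellations stay clean. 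Since this cancellation is precisely the content encoded by Lidskii's theorem, I would ultimately favour the first, conceptual route and simply cite the Lidskii--Wielandt inequalities, reserving the density computation of Proposition~\ref{prop:dichteTranslation} for the places where the density itself, rather than merely its support, is required.
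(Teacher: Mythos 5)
Your first route is correct, and it is genuinely different from the paper's. The paper disposes of this lemma in two lines: it invokes the tensor decomposition $\delta_x\ast\delta_y=(\delta_{x^0}\ast_0\delta_{y^0})\otimes\delta_{x^1+y^1}$ and then cites the support statement of Graczyk--Sawyer (Cor.~2.2 of \cite{graczykSawyer_complex}) for the semisimple, $\mathcal S_n$-invariant setting; the hypothesis $y+C(x)\subseteq\a_+$ is used there precisely to guarantee that the pushforwards $w_\ast(\delta_x\ast\delta_y)$, $w\in\mathcal S_n$, have disjoint supports, so that the symmetrized statement can be disentangled into the chamber version. Your argument instead works directly at the matrix level: the measure is the pushforward of Haar measure under the continuous map $u\mapsto\sigma(x+uyu^{-1})$ on the compact group $U(n)$, so its support is the (already closed) image, and the Lidskii--Wielandt theorem $\sigma(A+B)\in\sigma(A)+\operatorname{conv}(\mathcal S_n.\sigma(B))$ places that image inside $y+C(x)$. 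This is more elementary, bypasses \cref{prop:dichteTranslation} entirely, and correctly exposes that the regularity hypothesis $y+C(x)\subseteq\a_+$ is not needed for the inclusion itself --- it is an artefact of the route through the $\mathcal S_n$-invariant product formula (and of the lemma's intended use in \cref{thm:mra:schnittSkalenrme}). Your second, density-based route is only a sketch --- the alternating-sum cancellation outside the permutohedron is exactly the hard content and you do not carry it out --- but since you explicitly discard it in favour of the Lidskii argument, the proof you actually commit to is complete.
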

\begin{proof}
    This follows from \cite{graczykSawyer_complex}, Cor. 2.2. combined with the fact that $\epsilon_{x}*\epsilon_{y}=(\epsilon_{x^{0}}\ast_0 \epsilon_{y^{0}})\otimes \epsilon_{x^1+y^1}.$  Note again that the authors there work with an $\mathcal S_{n}$-invariant formulation. As we assumed $y+C(x)\subseteq\a_{+}$, the pushforward measures $w_{*}(\epsilon_{x}*\epsilon_{y})$ have disjoint supports, resulting in the expression above. 
\end{proof}

Besides the norm-decreasing translation operators $T_y$ on $L^2(\overline{\a_{+}},\omega),$ we will need  dilation operators in order to establish a radial multiresolution. These can be defined in the usual way: Note that $\omega$ is homogeneous of degree $2|\Sigma_+|$ and put 
$$ m\coloneqq \operatorname{dim}_{\mathbb R} \text{Herm}(n) + 2|\Sigma_+|\, = 2n^2 -n.$$
Then one easily calculates that for $a>0,$
\begin{align}\label{dilation}
    D_{a} f(x) \coloneqq a^{- \frac{m}{2}} f(\tfrac 1 a x)
\end{align}
defines a unitary operator $D_{a}$ on $L^2(\overline{\a_{+}}, \omega)$ which satisfies
\begin{align}\label{eqn:hklVsDilation}
    \hkl(D_{a}f)(x) = D_{1/a}(\hkl f)(x). 
\end{align}

Finally, we will need an orthonormal basis of $L^2(\overline{\a_{+}},\omega)$ which behaves nicely with respect to the Hankel transform. To this end, we consider the Schur functions
$(s_\lambda)_{\lambda\in  P_{+}} $ in $n$ variables, c.f. \cite{bump} or \cite{faraut_2008}.  These are $\mathcal S_n$-invariant trigonometric polynomials which   are indexed by the set 
$$  P_{+} \coloneqq \{\lambda \in \mathbb \Z^{n}: \lambda_1 
\geq \ldots \geq \lambda_n\}  $$ 
and given by
$$s_\lambda(x) = \frac{1}{\Delta(x)}\sum_{w\in \mathcal S_n} \sign(w)e^{i\langle  \lambda+\rho, wx\rangle} \, =\, \frac{A_{\lambda+\delta}(e^{ix})}{A_\delta(e^{ix})}, \quad x\in \mathbb R^n$$
with $$ \Delta(x) = \prod_{i<j}(e^{i(x_{i}-x_{j})/2}-e^{-i(x_{i}-x_{j})/2})  
=\sum_{w\in\mathcal S_{n}} \sign(w)e^{i\langle \rho, w x\rangle},$$ $$ A_\lambda(e^{ix}) \coloneqq \det\bigl(e^{i\lambda_jx_k}\bigr)_{1\leq j,k\leq n} \quad \text{and}\quad\delta = (n-1, n-2, \ldots ,0).$$ 
Note that $\,P_+ = \Z^{n}\cap \overline{\a_{+}} $ and that \begin{align*}
  \Delta(x) = e^{-i\frac{n-1}{2}\langle x,\underline 1\rangle} A_{\delta}(e^{ix}), 
\end{align*}
so that in particular $|\Delta|$ is $\mathcal S_{n}$-invariant and $I$-periodic for $$I=2\pi\mathbb Z^n.$$
It is well-known that the $s_\lambda\,,\,\lambda\in P_+$ are the Weyl characters of $U(n)$, considered as functions on its maximal torus $T \cong \mathbb T^n$ with $\mathbb T = \mathbb R/2\pi\mathbb Z\,.$ By the Schur orthogonality relations and the Peter-Weyl theorem, they form an orthogonal  basis of the Hilbert space 
$$ X\coloneqq L^2(\mathbb T^n, |\Delta|^2)^{\mathcal S_n} $$
 consisting of those functions $f\in L^2(\mathbb T^n, |\Delta|^2)$ which are $\mathcal S_n$-invariant. More precisely, 
the Schur functions satisfy
$$ \frac{1}{n!}\int_{\mathbb T^n} s_\lambda(x)\overline{s_\mu(x)}\,|\Delta(x)|^2 dx = \delta_{\lambda, \mu}.$$
For details, see \cite{bump}, Thm. 22.2 ff. together with Thm. 36.2, or \cite{faraut_2008}, Section 11. We consider the following renormalization of Schur functions: 
$$ S_\lambda(x)\coloneqq \frac{1}{i^{|\Sigma_+|}\sqrt{n!}}\, s_\lambda(x), \quad \lambda \in P_+\,.$$
The $(S_\lambda)_{\lambda\in P_+}$ form an orthonormal basis of $X$. 
Moreover, in view of formula \eqref{HIZ}, they are connected with the Bessel function $J$ via
\begin{align}\label{S_Bessel} S_\lambda(x) = M_\lambda\frac{\pi(x)}{\Delta(x)} J(x,i(\lambda+\rho)) \>\> \text{ with } \>\> M_\lambda = 
 \frac{\pi(\lambda+\rho)}{\sqrt{n!}\,\prod_{k=1}^{n-1}k!}\,.\end{align}

This connection and the periodicity of the $S_\lambda$ will be of crucial importance for our constructions, together with the observation that $\pi^2 = \omega$ on $\mathbb R^n$, where $\omega$ occurs as the density of the Haar measure of the hypergroup $(\overline{\a_{+}}, \ast)$ discussed before. 
Sometimes it will be convenient to work on the fundamental domain
$$ D = [0, 2\pi[^n $$
of the torus $\mathbb T^n= \mathbb R^n/I$ and  use the identification
\begin{equation}\label{identS} X \cong \{\alpha: \mathbb R^n\to \mathbb C: \alpha\vert_{D} \in L^2(D, |\Delta|^2)^{\mathcal S_n}, \alpha(x+q) = \alpha(x) \> \text{a.e.} \> (\forall q\in I)\}.\end{equation}

Note  that the space $X$ is the analogue of the space $S$ in \cite{roslerRauhut}, where a radial multiresolution analysis in $\mathbb R^3$ was constructed.

For abbreviation we further introduce the notation
\begin{align*}
    T^{(\lambda)} \coloneqq T_{\lambda+\rho} \quad (\lambda\in P_+).
\end{align*}
As a consequence of identity \eqref{Hankelid} for the Hankel transform, we have 
\begin{align}\label{eqn:radial:analogon43}
    \hkl(M_{\lambda}T^{(\lambda)}f)(\xi) = \frac{\Delta(\xi)S_{\lambda}(\xi)}{\pi(\xi)}\hkl f(\xi)
\end{align}
for $f\in L^2(\overline{\a_{+}}, \omega)$.

%----------------------------------------------------------------------------------------
%	Multiresolution
%----------------------------------------------------------------------------------------

\section{Radial Multiresolution Analysis in $\text{Herm}(n)$}\label{mra}

We start with the definition of a radial multiresolution analysis (MRA) on $\text{Herm}(n)$. It is a higher-rank anlogue of the concept of a radial MRA in 
$ \mathbb R^3$ introduced in \cite{roslerRauhut}. 

\begin{definition}\label{def:mra:mra}\ 
  We call a (bilateral) sequence $(V_{j})_{j\in\Z}$ of closed linear subspaces of $L^{2}(\overline{\a_{+}}, \omega)$ a (dyadic) radial MRA in $\text{Herm}(n),$ if it satisfies the following properties:
    \begin{enumerate}
        \item $V_{j}\subseteq V_{j+1}$ for all $j\in\Z$,
        \item  $\bigcap_{j=-\infty}^{\infty} V_{j} = \{0\}$,
        \item  $\bigcup_{j=-\infty}^{\infty}V_{j}$ is dense in $L^{2}(\overline{\a_{+}}, \omega)$,
        \item  $f\in V_{j}$ if and only if $f(2\,\cdot) \in V_{j+1}$,
        \item  there exists a function $\phi\in L^{2}(\overline{\a_{+}}, \omega)$ such that
            \begin{align*}
                B_{\phi} \coloneqq \left\lbrace M_{\lambda}T^{(\lambda)}\phi : \lambda\in P_{+} \right\rbrace  
            \end{align*}
            is a Riesz basis of $V_{0}$, i.e. $\text{span}\, B_{\phi}$ is dense in $V_{0}$ and there exist constants $A, B > 0$ such that
            \begin{align*}
                A\|\alpha\|_{2}^{2} \,\leq \Bigg\|\sum_{\lambda\in P_{+}}\alpha_{\lambda}M_{\lambda}T^{(\lambda)}\phi\,\Bigg\|_{2}^{2}\leq B\|\alpha\|_{2}^{2} 
            \end{align*}
            for all $\alpha = (\alpha_{\lambda})_{\lambda\in P_{+}}\in\ell^{2}( P_{+})$.
    \end{enumerate} 

    The function $\phi$ is called \emph{scaling function} for the MRA $(V_{j})_{j\in\Z}$.
\end{definition}

\begin{definition}
For $\phi \in  L^2(\overline{\a_{+}}, \omega)$ we introduce the function
\begin{align*}
    P_{\phi}(\xi) \coloneqq \frac{1}{n!}\sum_{q\in I} |\hkl{\phi}(\xi+q)|^{2}\qquad (\xi\in\R^{n}),
\end{align*}
which is $\mathcal S_{n}$-invariant and $I$-periodic. 
\end{definition}

\begin{proposition}\label{prop:mra:analogonProp42}
    Let $\phi\in L^2(\overline{\a_{+}}, \omega)$ and $A, B > 0$. Then
    \begin{align*}
        A\|\alpha\|_{2}^{2} \,\leq \Bigg\|\sum_{\lambda\in P_{+}}\alpha_{\lambda}M_{\lambda}T^{(\lambda)}\phi\,\Bigg\|_{2}^{2} \leq B\|\alpha\|_{2}^{2} \quad\text{for all}\ \alpha\in\ell^{2}( P_{+}) 
    \end{align*}
    if and only if
    \begin{align*}
        A\leq P_{\phi}(\xi) \leq B \quad \text{for almost all}\ \xi\in\overline{\a_{+}}. 
    \end{align*}
\end{proposition}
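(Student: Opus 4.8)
The plan is to pass to the Hankel side, where the operators $M_{\lambda}T^{(\lambda)}$ act by multiplication with the renormalized Schur polynomials. By \eqref{eqn:radial:analogon43} and linearity, for a finitely supported sequence $\alpha$ one has
\[
  \hkl\Bigl(\sum_{\lambda\in P_{+}}\alpha_{\lambda}M_{\lambda}T^{(\lambda)}\phi\Bigr)(\xi)
  = \frac{\Delta(\xi)}{\pi(\xi)}\,\hkl\phi(\xi)\,\alpha(\xi),
  \qquad \alpha(\xi):=\sum_{\lambda\in P_{+}}\alpha_{\lambda}S_{\lambda}(\xi).
\]
Since $\hkl$ is unitary on $L^{2}(\overline{\a_{+}},\omega)$ and $\pi^{2}=\omega$ on $\R^{n}$, the factor $|\Delta/\pi|^{2}\omega=|\Delta|^{2}$ emerges, so that
\[
  \Bigl\|\sum_{\lambda}\alpha_{\lambda}M_{\lambda}T^{(\lambda)}\phi\Bigr\|_{2}^{2}
  = \int_{\overline{\a_{+}}} |\Delta(\xi)|^{2}\,|\hkl\phi(\xi)|^{2}\,|\alpha(\xi)|^{2}\,d\xi .
\]

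Next I would periodize. The function $\alpha$ lies in $S$ and hence, like $|\Delta|^{2}$, is $\mathcal S_{n}$-invariant and $I$-periodic; extending $\hkl\phi$ to an $\mathcal S_{n}$-invariant function on $\R^{n}$ renders the whole integrand $\mathcal S_{n}$-invariant, so that integrating over $\overline{\a_{+}}$ equals $\tfrac1{n!}$ times the integral over $\R^{n}=\bigsqcup_{q\in I}(D+q)$. Using the $I$-periodicity of $|\Delta|^{2}$ and $|\alpha|^{2}$ to sum $|\hkl\phi(\cdot+q)|^{2}$ over $q\in I$, the right-hand side collapses to $\int_{D} P_{\phi}(\xi)\,|\alpha(\xi)|^{2}\,|\Delta(\xi)|^{2}\,d\xi$, the factor $n!$ cancelling against the $1/n!$ in the definition of $P_{\phi}$. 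On the other hand, since $(S_{\lambda})_{\lambda\in P_{+}}$ is an orthonormal basis of $S=L^{2}(\mathbb T^{n},|\Delta|^{2})^{\mathcal S_{n}}$, we have $\|\alpha\|_{2}^{2}=\int_{D}|\alpha(\xi)|^{2}|\Delta(\xi)|^{2}\,d\xi$. Thus the asserted frame inequalities are equivalent to
\[
  A\int_{D} |\alpha|^{2}|\Delta|^{2}
  \;\le\; \int_{D} P_{\phi}\,|\alpha|^{2}|\Delta|^{2}
  \;\le\; B\int_{D}|\alpha|^{2}|\Delta|^{2}
\]
holding for all $\alpha$ in a dense subspace of $S$.

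It then remains to prove the elementary fact that these quadratic-form bounds, with $P_{\phi}\ge 0$ measurable, $\mathcal S_{n}$-invariant and $I$-periodic, are equivalent to $A\le P_{\phi}\le B$ a.e. The implication ``$\Leftarrow$'' is immediate by pointwise estimation under the integral. For ``$\Rightarrow$'', I would first extend the inequalities from finite combinations of the $S_{\lambda}$ to arbitrary $\alpha\in S$, in particular to $\alpha=\chi_{E}$ for $\mathcal S_{n}$-invariant measurable $E\subseteq D$, which lies in $S$ because $D$ is bounded and $|\Delta|^{2}$ is bounded. For the upper bound this extension uses Fatou's lemma along an a.e.\ convergent subsequence, so no a priori bound on $P_{\phi}$ is needed; testing against $E=\{P_{\phi}>B\}$, which is automatically $\mathcal S_{n}$-invariant, then forces $\int_{E}|\Delta|^{2}=0$, whence $P_{\phi}\le B$ a.e.\ (as $|\Delta|^{2}>0$ a.e.). Having established that $P_{\phi}$ is bounded, the functional $\alpha\mapsto\int_{D}P_{\phi}|\alpha|^{2}|\Delta|^{2}$ is continuous on $S$, so the lower bound extends to all $\alpha\in S$; testing against $E=\{P_{\phi}<A\}$ yields $P_{\phi}\ge A$ a.e. Since $P_{\phi}$ is $\mathcal S_{n}$-invariant, ``a.e.\ on $D$'' coincides with ``a.e.\ on $\overline{\a_{+}}$''.

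Finally I would tidy up the convergence issue glossed over at the start: the displayed norm identity is first proved for finitely supported $\alpha$, where all sums are finite. In the direction ``$\Leftarrow$'' the upper bound shows that $\alpha\mapsto\sum_{\lambda}\alpha_{\lambda}M_{\lambda}T^{(\lambda)}\phi$ is bounded on the finitely supported sequences and hence extends continuously to $\ell^{2}(P_{+})$, so the series converges for every $\alpha\in\ell^{2}(P_{+})$ and the inequalities persist in the limit; in the direction ``$\Rightarrow$'' only finitely supported $\alpha$ are required. The step I expect to require the most care is precisely this interplay between the two directions, together with the order of the measure-theoretic argument: the boundedness $P_{\phi}\le B$ must be extracted first (via Fatou) before the lower estimate can be handled by a continuity argument, mirroring the classical reasoning in \cite{wojtaszczyk}.
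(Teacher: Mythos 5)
Your proposal is correct and follows essentially the same route as the paper: reduce via the Plancherel theorem and \eqref{eqn:radial:analogon43} to the identity $\bigl\|\sum_\lambda \alpha_\lambda M_\lambda T^{(\lambda)}\phi\bigr\|_2^2=\int_{\mathbb T^n}|\widetilde\alpha|^2P_\phi|\Delta|^2$, compare with $\|\alpha\|_2^2=\|\widetilde\alpha\|_S^2$, and conclude by density of the finite sequences. The only difference is that you spell out the final measure-theoretic step (Fatou, testing against indicator functions of $\{P_\phi>B\}$ and $\{P_\phi<A\}$, and the convergence of the series for general $\alpha\in\ell^2(P_+)$), which the paper dismisses as immediate; your elaboration is accurate.
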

\begin{proof}
    Let $\alpha\in\ell^{2}( P_{+})$ be a finite sequence. Define
    \begin{align*}
      \widetilde{\alpha}\coloneqq \sum_{\lambda\in P_{+}}\alpha_{\lambda}S_{\lambda} \in X.
    \end{align*}
    Using the Plancherel theorem for the Hankel transform and \cref{eqn:radial:analogon43} we obtain
    \begin{align*}
        \Bigg\|\sum_{\lambda\in P_{+}} \alpha_{\lambda}M_{\lambda}T^{(\lambda)}\phi\,\Bigg\|_{2}^{2} &= \Bigg\|\sum_{\lambda\in P_{+}}\alpha_{\lambda}\frac{\Delta S_{\lambda}}{\pi}\,\hkl\phi\,\Bigg\|_{2}^{2}\\ 
        &=\int_{\overline{\a_{+}}} \Bigg\vert \sum_{\lambda\in P_{+}}\alpha_{\lambda} S_{\lambda}(\xi)\Bigg\vert^{2} |\hkl\phi(\xi)|^{2} \frac{\omega(\xi)}{|\pi(\xi)|^{2}} |\Delta(\xi)|^{2} d\xi\\
        &= \frac{1}{n!}\int_{\R^{n}} |\widetilde{\alpha}(\xi)|^{2}\, |\hkl\phi(\xi)|^{2} |\Delta(\xi)|^{2}d\xi\\
     &=\frac{1}{n!}\sum_{q\in I}\int_{\mathbb T^n} |\widetilde{\alpha}(\xi+q)|^{2} |\hkl\phi(\xi+q)|^{2} |\Delta(\xi)|^{2}d\xi\\
        &= \int_{\mathbb T^n} |\widetilde{\alpha}(\xi)|^{2} P_{\phi}(\xi)\ |\Delta(\xi)|^{2}d\xi.
    \end{align*}
    The $\{S_{\lambda} : \lambda\in P_{+}\}$ forming an orthonormal basis of $X,$ we have $$\|\alpha\|_{2}=\|\widetilde{\alpha}\|_X\,.$$ Now the assertion is immediate, since finite sequences are dense in $\ell^{2}( P_{+})$.
\end{proof}

With $A=B=1$ we obtain

\begin{corollary}\label{kor:mra:analogonKorr43}
    For $\phi\in L^2(\overline{\a_{+}}, \omega)$ the following are equivalent:
    \begin{enumerate}
    \item The set $B_{\phi}\coloneqq \{M_{\lambda}T^{(\lambda)}\phi : \lambda\in P_{+}\}$ is orthonormal in $L^2(\overline{\a_{+}}, \omega)$.
        \item  $P_{\phi} = 1$ a.e.
    \end{enumerate}
\end{corollary}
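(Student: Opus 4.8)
The plan is to read this off directly from \cref{prop:mra:analogonProp42} by specializing to $A=B=1$, bridging the two formulations with the standard Hilbert-space characterization of orthonormality. First I would record the elementary fact that a countable family $\{v_{\lambda}\}_{\lambda\in P_{+}}$ in a Hilbert space is orthonormal if and only if the synthesis map $\alpha\mapsto \sum_{\lambda}\alpha_{\lambda}v_{\lambda}$ is isometric from $\ell^{2}(P_{+})$, i.e.
\begin{align*}
    \Big\|\sum_{\lambda\in P_{+}}\alpha_{\lambda}v_{\lambda}\Big\|^{2}=\|\alpha\|_{2}^{2}\qquad\text{for all }\alpha\in\ell^{2}(P_{+}).
\end{align*}
If $\langle v_{\lambda},v_{\mu}\rangle=\delta_{\lambda\mu}$, then expanding the square makes all cross terms vanish and the identity is immediate. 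Conversely, testing the identity on $\alpha=e_{\lambda}$ yields $\|v_{\lambda}\|=1$, and polarizing, i.e. testing on $e_{\lambda}\pm e_{\mu}$ and on $e_{\lambda}\pm i\,e_{\mu}$, recovers $\langle v_{\lambda},v_{\mu}\rangle=0$ for $\lambda\neq\mu$. Since finite sequences are dense in $\ell^{2}(P_{+})$, it suffices to verify the identity on finite $\alpha$.

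Applying this with $v_{\lambda}=M_{\lambda}T^{(\lambda)}\phi$, orthonormality of $B_{\phi}$ is exactly the statement that
\begin{align*}
    \Big\|\sum_{\lambda\in P_{+}}\alpha_{\lambda}M_{\lambda}T^{(\lambda)}\phi\Big\|_{2}^{2}=\|\alpha\|_{2}^{2}\qquad\text{for all }\alpha\in\ell^{2}(P_{+}),
\end{align*}
which is precisely the two-sided estimate of \cref{prop:mra:analogonProp42} with $A=B=1$, since the chain $1\cdot\|\alpha\|_{2}^{2}\leq \|\cdot\|_{2}^{2}\leq 1\cdot\|\alpha\|_{2}^{2}$ forces equality. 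On the other side of that equivalence, the condition $A\leq P_{\phi}\leq B$ a.e. with $A=B=1$ reads $P_{\phi}=1$ a.e. Thus the corollary follows at once from the proposition.

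The one point I would flag is the domain of the almost-everywhere statement: \cref{prop:mra:analogonProp42} is phrased on $\overline{\a_{+}}$, while $P_{\phi}$ is defined and $\mathcal S_{n}$-invariant on all of $\R^{n}$. Since $\overline{\a_{+}}$ is, up to a null set, a fundamental domain for the $\mathcal S_{n}$-action, the statements ``$P_{\phi}=1$ a.e. on $\overline{\a_{+}}$'' and ``$P_{\phi}=1$ a.e. on $\R^{n}$'' coincide, so no ambiguity arises. There is no real obstacle here: the corollary is a direct specialization of the proposition, and the only non-tautological ingredient is the polarization argument for the converse direction of the orthonormality characterization, which is routine.
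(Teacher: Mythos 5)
Your proof is correct and is essentially the paper's own argument: the paper derives the corollary in one line by setting $A=B=1$ in \cref{prop:mra:analogonProp42}, exactly as you do. The only addition on your side is spelling out the routine Hilbert-space fact that orthonormality is equivalent to the synthesis map being isometric (via polarization), which the paper leaves implicit; your remark about $\overline{\a_{+}}$ versus $\R^{n}$ as the domain of the a.e.\ statement is also correct and harmless.
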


For $\phi\in L^2(\overline{\a_{+}}, \omega)$, put 
\begin{align*}
    V_{\phi} \coloneqq \overline{\text{span}\, B_{\phi}}\subseteq  L^2(\overline{\a_{+}}, \omega). 
\end{align*}
The set $B_{\phi}$ is a Riesz basis of $V_{\phi}$ if and only if the two equivalent conditions in the previous proposition are satisfied for some constants $A, B >0$. In this case we say that $\phi$ satisfies  \emph{condition (RB)}.

\begin{lemma}\label{lem:mra:analogonLemma45}
    Let $\phi\in L^2(\overline{\a_{+}}, \omega)$ satisfy (RB). Then for $f\in L^2(\overline{\a_{+}}, \omega)$ we have the equivalence
    \begin{align*}
        f\in V_{\phi}\quad \Longleftrightarrow\quad \hkl f(\xi)=\frac{\Delta(\xi)\beta(\xi)}{\pi(\xi)}\,\hkl \phi(\xi)\quad\text{with }\ \beta\in S. 
    \end{align*}
    The function $f\in V_{\phi}$ corresponding to $\beta=\sum_{\lambda\in P_{+}}\alpha_{\lambda}S_{\lambda}\in X$ with $\alpha\in\ell^{2}( P_{+})$ is given by $f=\sum_{\lambda\in P_{+}}\alpha_{\lambda}M_{\lambda}T^{(\lambda)}\phi$.
\end{lemma}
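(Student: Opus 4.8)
The plan is to realize $V_\phi$ as the image of a single bounded operator built from the Hankel transform, and then to transport the orthonormal basis $\{S_\lambda\}_{\lambda\in P_+}$ of $S$ onto the generating family $B_\phi$. Concretely, I would introduce the linear map $\Psi\colon S\to L^2(\overline{\a_{+}},\omega)$ determined by $\mathcal{H}(\Psi\beta)=\tfrac{\Delta\beta}{\pi}\,\mathcal{H}\phi$, that is, $\Psi\beta=\mathcal{H}^{-1}\bigl(\tfrac{\Delta\beta}{\pi}\,\mathcal{H}\phi\bigr)$, which is meaningful because $\mathcal{H}$ is a unitary operator on $L^2(\overline{\a_{+}},\omega)$.

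The first point to settle is that $\Psi$ is well defined and bounded. Since $\mathcal{H}$ is unitary, $\|\Psi\beta\|_2^2=\bigl\|\tfrac{\Delta\beta}{\pi}\,\mathcal{H}\phi\bigr\|_2^2$, and here the apparent pole of $1/\pi$ on the walls of the chamber disappears thanks to the identity $\omega=|\pi|^2$ on $\R^n$, which cancels the weight against the denominator. Extending $\beta$ and $\mathcal{H}\phi$ symmetrically to $\R^n$ and folding the integral onto $\mathbb T^n$, exactly as in the proof of \cref{prop:mra:analogonProp42}, I expect to arrive at $\|\Psi\beta\|_2^2=\int_{\mathbb T^n}|\beta|^2P_\phi\,|\Delta|^2\,d\xi$. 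Condition (RB), i.e. $A\le P_\phi\le B$ almost everywhere, then yields $A\|\beta\|_S^2\le\|\Psi\beta\|_2^2\le B\|\beta\|_S^2$, so that $\Psi$ is bounded and bounded below; in particular $\Psi$ is a topological isomorphism of $S$ onto its closed image $\Psi(S)$.

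Next I would match the bases. Applying \eqref{eqn:radial:analogon43} with $f=\phi$ gives $\mathcal{H}(M_\lambda T^{(\lambda)}\phi)=\tfrac{\Delta S_\lambda}{\pi}\,\mathcal{H}\phi=\mathcal{H}(\Psi S_\lambda)$, whence $\Psi S_\lambda=M_\lambda T^{(\lambda)}\phi$ by injectivity of $\mathcal{H}$. As $\{S_\lambda\}_{\lambda\in P_+}$ is an orthonormal basis of $S$ and $\Psi$ is a topological isomorphism onto the closed subspace $\Psi(S)$, it follows that $\Psi(S)=\overline{\Psi(\operatorname{span}\{S_\lambda\})}=\overline{\operatorname{span}\,B_\phi}=V_\phi$. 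The claimed equivalence is then immediate: $f\in V_\phi=\Psi(S)$ holds if and only if $f=\Psi\beta$ for some $\beta\in S$, i.e. if and only if $\mathcal{H}f=\tfrac{\Delta\beta}{\pi}\,\mathcal{H}\phi$ with $\beta\in S$. Finally, for $\beta=\sum_{\lambda\in P_+}\alpha_\lambda S_\lambda$ with $\alpha\in\ell^2(P_+)$, the continuity and linearity of $\Psi$ give $f=\Psi\beta=\sum_{\lambda\in P_+}\alpha_\lambda\,\Psi S_\lambda=\sum_{\lambda\in P_+}\alpha_\lambda\,M_\lambda T^{(\lambda)}\phi$, which is the asserted expansion.

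The only genuinely non-formal step I anticipate is the well-definedness of $\Psi$: one must verify that $\tfrac{\Delta\beta}{\pi}\,\mathcal{H}\phi$ indeed belongs to $L^2(\overline{\a_{+}},\omega)$ in spite of the zeros of $\pi$ along the chamber walls. This is precisely resolved by the cancellation $\omega=|\pi|^2$ together with the upper Riesz bound $P_\phi\le B$; once the two-sided bound on $\|\Psi\beta\|_2$ is established, the rest is a routine transport of the orthonormal basis through the isomorphism $\Psi$.
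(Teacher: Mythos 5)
Your proposal is correct and follows essentially the same route as the paper: both transport the problem through the unitary Hankel transform via \eqref{eqn:radial:analogon43} and use the computation underlying \cref{prop:mra:analogonProp42} to identify $\mathcal H(V_\phi)$ with $\{\tfrac{\Delta\beta}{\pi}\mathcal H\phi : \beta\in S\}$. You merely package this as an explicit bounded-below operator $\Psi$, which makes the convergence/well-definedness points more visible than in the paper's terser argument but adds no new idea.
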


\begin{proof}
    As the $M_{\lambda}T^{(\lambda)}\phi$ form a Riesz basis for $V_{\phi}$, the functions
    \begin{align*}
        \frac{\Delta S_{\lambda}}{\pi}\ \hkl \phi = \hkl(M_{\lambda}T^{(\lambda)}\phi) 
    \end{align*}
    form a Riesz basis of $\hkl(V_{\phi})$, where we again used \cref{eqn:radial:analogon43}. This yields the assertion, since 
    for $\alpha \in \ell^2(P_+),$ 
    $$
       \mathcal H\Bigl(\sum_{\lambda\in P_{+}}\alpha_{\lambda} M_{\lambda}T^{(\lambda)}\phi\Bigr) = \frac{\Delta}{\pi}\Bigl(\sum_{\lambda\in P_{+}}\alpha_{\lambda}S_{\lambda}\Bigr)\hkl\phi.
    $$
\end{proof}

When $\phi$ is the scaling function of an MRA $\{V_j\}$, then $V_\phi= V_0$. The following corollary shows that in  contrast to the classical notion of an MRA, $V_0$ is not shift-invariant (and similarly the other scale spaces):

\begin{corollary}
    Let $(V_{j})_{j\in\Z}$ be a radial MRA. Then $f\in V_{0}$ implies $\,T^{(\lambda)} \!f\notin V_{0}$ for all $\lambda\in P_{+}$. 
\end{corollary}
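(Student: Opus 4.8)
The plan is to argue by contradiction on the Hankel transform side, exploiting the multiplier description of membership in $V_0=V_\phi$ provided by \cref{lem:mra:analogonLemma45}. We may clearly assume $f\neq 0$. Suppose, for contradiction, that $f\in V_0$ and also $T^{(\lambda)}f\in V_0$. By \cref{lem:mra:analogonLemma45} there are $\beta,\gamma\in S$ with
$$\hkl f=\frac{\Delta\beta}{\pi}\,\hkl\phi,\qquad \hkl{(T^{(\lambda)}f)}=\frac{\Delta\gamma}{\pi}\,\hkl\phi .$$
On the other hand, \eqref{eqn:radial:analogon43} gives $\hkl{(T^{(\lambda)}f)}=\tfrac{1}{M_\lambda}\tfrac{\Delta S_\lambda}{\pi}\,\hkl f$. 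First I would substitute the formula for $\hkl f$ into this identity and compare with the one for $\hkl{(T^{(\lambda)}f)}$; dividing out the common factor $\tfrac{\Delta}{\pi}\hkl\phi$, which is nonzero almost everywhere on $E:=\{\hkl\phi\neq 0\}$, yields the pointwise relation
$$\gamma=\frac{\Delta}{\pi}\,\frac{S_\lambda\,\beta}{M_\lambda}\qquad\text{a.e.\ on }E. $$

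The next step is to isolate the ``non-periodic'' factor $\Delta/\pi$. Since $f\neq 0$ we have $\beta\neq 0$ on a set of positive measure (otherwise $\hkl f=0$); moreover $S_\lambda$, being a renormalised Schur polynomial, and $\Delta$ each vanish only on a null set. Hence there is a set $E_0\subseteq E$ of positive measure on which the relation above may be rearranged to
$$\frac{\Delta}{\pi}=\frac{M_\lambda\,\gamma}{S_\lambda\,\beta}\qquad\text{a.e.\ on }E_0. $$
The right-hand side is a ratio of elements of $S$ (times the constant $M_\lambda$) and is therefore $I$-periodic. The crux is that the left-hand side is \emph{not}: since $|\Delta|$ is $I$-periodic while $\pi$ is a non-constant homogeneous polynomial, along each fibre $\xi+I$ the quantity $\bigl|\tfrac{\Delta}{\pi}(\xi+q)\bigr|=|\Delta(\xi)|/|\pi(\xi+q)|$ genuinely varies with $q\in I$ (indeed it tends to $0$ as $|q|\to\infty$). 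Thus $\Delta/\pi$ cannot coincide almost everywhere with an $I$-periodic function on a set that overlaps itself under a non-central lattice translation.

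To convert this into a contradiction I would use condition (RB): $P_\phi\geq A>0$ a.e., so the support $E$ meets almost every fibre $\xi+I$, and I would then produce a lattice vector $q\in I$ with $q\notin\R\underline 1$ (so that $\pi(\,\cdot+q)^2\not\equiv\pi^2$ as polynomials) for which $E_0$ and its translate $E_0-q$ overlap in positive measure. On such an overlap the periodicity of the right-hand side forces $\tfrac{\Delta}{\pi}(\xi)=\tfrac{\Delta}{\pi}(\xi+q)$; taking moduli and using $I$-periodicity of $|\Delta|$ gives $\pi(\xi)^2=\pi(\xi+q)^2$ on a set of positive measure, hence identically, which is impossible for $q\notin\R\underline 1$. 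I expect this last step --- guaranteeing that $\operatorname{supp}\hkl\phi$ is not (up to $I$-translation) concentrated in a single fundamental cell of $\R^n/I$, so that the required self-overlap exists --- to be the main obstacle: it is the point where the full force of the MRA axioms (the density of $\bigcup_j V_j$ and the triviality of $\bigcap_j V_j$), rather than merely the Riesz-basis property, should enter. Once the self-overlap is secured, the non-periodicity of $\Delta/\pi$ closes the argument.
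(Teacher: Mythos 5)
Your reduction to the multiplier identity $\gamma=\frac{\Delta}{\pi}\cdot\frac{S_\lambda\beta}{M_\lambda}$ a.e.\ on $E=\{\hkl\phi\neq 0\}$ is correct and is in substance the same mechanism as the paper's own (much terser) proof, which simply notes $\hkl(T^{(\lambda)}f)=J(\cdot,i(\lambda+\rho))\,\hkl f$ and asserts that $J(\cdot,i(\lambda+\rho))\beta\notin S$ because $J(\cdot,i(\lambda+\rho))$ is not periodic. The difference is that you correctly observe that \cref{lem:mra:analogonLemma45} determines the multiplier only on $E$, so that the non-periodicity of $\Delta/\pi$ must be played off against a self-overlap of $E$ under some lattice translation $q\in I$ with $q\notin\R\,\underline 1$ -- and you leave exactly this overlap step open.

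That step is a genuine gap, and it cannot be closed from the hypotheses you invoke: condition (RB) only guarantees that almost every fibre $\xi+I$ meets $E$ at least once, not twice, and neither $\bigcap_jV_j=\{0\}$ nor the density of $\bigcup_jV_j$ improves this. Indeed, the paper's own Shannon-type example in \cref{scaling} has $\hkl\phi=\alpha\,\chi_{[-\pi,\pi]^n}$, so $E$ is a single fundamental cell and meets each fibre exactly once; there the function $\frac{\Delta S_\lambda\beta}{M_\lambda\pi}$ restricted to $[-\pi,\pi)^n$ lies in $L^2$ against $|\Delta|^2$ (since $\Delta/\pi$ is bounded on compacts) and extends $I$-periodically and $\mathcal S_n$-invariantly to an element of $S$, so the criterion of \cref{lem:mra:analogonLemma45} is in fact met by $T^{(\lambda)}f$. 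In other words, the obstacle you flag is not merely the hard part of the argument: it is the point where the argument -- including the paper's one-line version, which tacitly treats the multiplier as globally determined rather than determined only on $\operatorname{supp}\hkl\phi$ -- requires the extra hypothesis that $\hkl\phi\neq 0$ a.e.\ (or at least a positive-measure self-overlap of its support under a non-central lattice shift). Under that hypothesis your periodicity argument closes cleanly and is a more careful rendering of the paper's proof; without it, the proposal does not prove the statement as written. The caveat $f\neq 0$, which you handle and the statement omits, is also needed.
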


\begin{proof} Recall that $\mathcal H(T^{(\lambda)} \!f) = J(\,.\,,i(\lambda+\rho)) \mathcal Hf.$ 
    But if $\beta\in X$, then $J(\cdot, i(\lambda+\rho))\,\beta\notin X$ as the functions $J(\cdot, i(\lambda+\rho))$ are not periodic. Now the previous lemma implies the assertion. 
\end{proof}

\begin{theorem}[Orthonormalization]\label{thm:mra:orthogonalization}
    Suppose that $\phi\in L^2(\overline{\a_{+}}, \omega)$ satisfies condition (RB), i.e.  there are constants $A, B>0$ such that $A\leq P_\phi(\xi) \leq B$ a.e. Define $\phi^{*}\in L^2(\overline{\a_{+}}, \omega)$ by its Hankel transform
    \begin{align}\label{renorm}
        \hkl \phi^{*}\coloneqq\frac{\hkl \phi}{\sqrt{P_{\phi}}}. 
    \end{align}
    Then $B_{\phi^{*}}=\{M_{\lambda}T^{(\lambda)}\phi^{*} : \lambda\in P_{+}\}$ forms an orthonormal basis of $V_{\phi}=V_{\phi^{*}}$.     
\end{theorem}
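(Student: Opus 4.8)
The plan is to verify the two defining properties of an orthonormal basis separately: first that $B_{\phi^*}$ is orthonormal, which by \cref{kor:mra:analogonKorr43} reduces to checking $P_{\phi^*} = 1$ a.e.; and second that $V_{\phi^*} = \overline{\operatorname{span}}\,B_{\phi^*}$ coincides with $V_\phi$, for which I would use the description of membership in $V_\phi$ from \cref{lem:mra:analogonLemma45}. Since the closed span of an orthonormal system is automatically the Hilbert space for which it is an orthonormal basis, these two facts together yield the assertion. Before anything else I would check that $\phi^*$ is well defined in $L^2(\overline{\a_{+}}, \omega)$: condition (RB) gives $\sqrt{A}\le \sqrt{P_\phi}\le\sqrt{B}$ a.e., so $1/\sqrt{P_\phi}$ is bounded and $|\mathcal H\phi^*| = |\mathcal H\phi|/\sqrt{P_\phi} \le |\mathcal H\phi|/\sqrt{A}$ lies in $L^2(\overline{\a_{+}},\omega)$; applying the inverse of the unitary Hankel transform then produces $\phi^*\in L^2(\overline{\a_{+}},\omega)$. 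Here it is essential that $P_\phi$ be bounded \emph{away from zero}, which is precisely the lower Riesz bound.

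The orthonormality is the short central computation. Extending $\mathcal H\phi$ and $P_\phi$ to $\mathcal S_n$-invariant functions on $\R^n$, relation \eqref{renorm} reads $\mathcal H\phi^*(\xi) = \mathcal H\phi(\xi)/\sqrt{P_\phi(\xi)}$ pointwise a.e. I would insert this into the definition of $P_{\phi^*}$ and exploit that $P_\phi$ is $I$-periodic, so $P_\phi(\xi+q) = P_\phi(\xi)$ for every $q\in I$:
\begin{align*}
  P_{\phi^*}(\xi) = \frac{1}{n!}\sum_{q\in I}\frac{|\mathcal H\phi(\xi+q)|^2}{P_\phi(\xi+q)}
  = \frac{1}{P_\phi(\xi)}\cdot\frac{1}{n!}\sum_{q\in I}|\mathcal H\phi(\xi+q)|^2
  = \frac{P_\phi(\xi)}{P_\phi(\xi)} = 1
\end{align*}
for almost all $\xi$. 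By \cref{kor:mra:analogonKorr43} this shows that $B_{\phi^*}$ is orthonormal; in particular $\phi^*$ satisfies (RB) with $A=B=1$, so that \cref{lem:mra:analogonLemma45} also applies to $\phi^*$.

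Finally I would identify the two spaces. As both $\phi$ and $\phi^*$ satisfy (RB), \cref{lem:mra:analogonLemma45} gives $f\in V_\phi \iff \mathcal Hf = \tfrac{\Delta\beta}{\pi}\,\mathcal H\phi$ and $f\in V_{\phi^*} \iff \mathcal Hf = \tfrac{\Delta\gamma}{\pi}\,\mathcal H\phi^*$, with $\beta,\gamma\in S$. Using $\mathcal H\phi = \sqrt{P_\phi}\,\mathcal H\phi^*$ one passes between the two descriptions via $\gamma = \sqrt{P_\phi}\,\beta$ and $\beta = \gamma/\sqrt{P_\phi}$. The point that must be verified — and which I regard as the only genuine obstacle — is that multiplication by $\sqrt{P_\phi}$ and by $1/\sqrt{P_\phi}$ maps $S$ into itself. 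This holds because $\sqrt{P_\phi}$ inherits $\mathcal S_n$-invariance and $I$-periodicity from $P_\phi$ and, together with its reciprocal, is bounded (again by both Riesz bounds of (RB)); multiplying an element of $S = L^2(\mathbb T^n, |\Delta|^2)^{\mathcal S_n}$ by such a bounded, $\mathcal S_n$-invariant, $I$-periodic factor keeps it $\mathcal S_n$-invariant, $I$-periodic, and square-integrable against $|\Delta|^2$, hence in $S$.

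Consequently $V_\phi = V_{\phi^*}$, and since $B_{\phi^*}$ is orthonormal and spans $V_{\phi^*}$ by definition, it is an orthonormal basis of $V_{\phi^*} = V_\phi$, completing the proof.
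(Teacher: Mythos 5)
Your proposal is correct and follows essentially the same route as the paper: well-definedness of $\phi^*$ from the lower Riesz bound, $P_{\phi^*}=1$ via the $I$-periodicity of $P_\phi$ combined with \cref{kor:mra:analogonKorr43}, and the identification $V_\phi=V_{\phi^*}$ through \cref{lem:mra:analogonLemma45}, where the key point — that multiplication by $\sqrt{P_\phi}^{\pm1}$ preserves $S$ because $P_\phi$ is bounded above and away from zero — is exactly the one the paper isolates (there phrased as $S_\lambda/\sqrt{P_\phi}\in S$ and $S_\lambda\sqrt{P_\phi}\in S$ for the generators). No gaps.
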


\begin{proof} Note first that as a consequence of condition (RB), the right side in \eqref{renorm} belongs to $L^2(\overline{\a_{+}}, \omega).$ 
    By definition $P_{\phi^{*}}=1$ a.e., so by \cref{kor:mra:analogonKorr43} it only remains to prove $V_{\phi}=V_{\phi^{*}}$. For this, it suffices to verify that $M_{\lambda}T^{(\lambda)}\phi^{*}\in V_{\phi}$ and $M_{\lambda}T^{(\lambda)}\phi\in V_{\phi^{*}}$ for all $\lambda\in P_{+}$. We employ \cref{lem:mra:analogonLemma45} and see
    \begin{align*}
        M_{\lambda}T^{(\lambda)}\phi^{*}\in V_{\phi}\quad&\Longleftrightarrow\quad \hkl(M_{\lambda}T^{(\lambda)}\phi^{*})=\frac{\Delta \beta}{\pi}\hkl\phi\quad\text{for some}\ \beta\in X.
    \end{align*}
    But
    \begin{align*}
        \hkl(M_{\lambda}T^{(\lambda)}\phi^{*})=\frac{\Delta S_{\lambda}}{\pi}\frac{\hkl\phi}{\sqrt{P_{\phi}}}. 
    \end{align*}
    Analogously we get
    \begin{align*}
        M_{\lambda}T^{(\lambda)}\phi\in V_{\phi^{*}}\quad\Longleftrightarrow\quad \hkl(M_{\lambda}T^{(\lambda)}\phi)=\frac{\Delta \widetilde\beta}{\pi}\hkl\phi^{*} = \frac{\Delta \widetilde\beta}{\pi}\frac{\hkl\phi}{\sqrt{P_{\phi}}}\quad\text{for some}\ \widetilde\beta\in X
    \end{align*}
    and
    \begin{align*}
        \hkl(M_{\lambda}T^{(\lambda)}\phi)=\frac{\Delta S_{\lambda}}{\pi}\,\hkl\phi. 
    \end{align*}
    This gives, for all $\lambda \in P_+$,  the conditions
    \begin{align*}
      \frac{S_{\lambda}}{\sqrt{P_{\phi}}}\in X\quad\text{and}\quad S_{\lambda}\sqrt{P_{\phi}}\in X.
    \end{align*}
     But these conditions are guaranteed by our assumption on $P_{\phi}$.
\end{proof}

Given a function $\phi\in L^2(\overline{\a_{+}}, \omega)$ satisfying (RB) plus some additional conditions (see \cref{prop:mra:prop48analogon} and \cref{thm:mra:schnittSkalenrme} below), we will now construct a radial MRA having $\phi$ as scaling function. Recall the unitary dilations \ref{dilation} and
define scale spaces $\{V_{j}\}_{j\in\Z}$ by
\begin{align*}
    V_{0}\coloneqq V_{\phi},\quad V_{j}\coloneqq D_{2^{-j}} V_{0}.
\end{align*}
Then property (4) of \cref{def:mra:mra} is satisfied by construction. Putting
\begin{align}\label{eqn:mra:phiJLambda}
    \phi_{j,\lambda}(\xi)\coloneqq D_{2^{-j}}(M_{\lambda}T^{(\lambda)}\phi)(\xi)=2^{\frac{jm}{2}}M_{\lambda}(T^{(\lambda)}\phi)(2^{j}\xi)\quad (j\in\Z,\,\lambda\in P_{+}), 
\end{align}
we have
\begin{align*}
    \langle \phi_{j,\lambda}, \phi_{j,\mu}\rangle = \langle \phi_{0,\lambda},\phi_{0,\mu}\rangle 
\end{align*}
since dilations are unitary. Thus $\{\phi_{j,\lambda} : \lambda\in P_{+}\}$ is a Riesz basis of $V_{j}$ with the same Riesz constants $A, B>0$ as for $\phi$. In particular,
\begin{align*}
    V_{j}=\overline{\text{span}\,\{\phi_{j,\lambda} : \lambda\in P_{+}\}}. 
\end{align*}
Moreover, if $B_{\phi}=\{\phi_{0,\lambda} : \lambda\in P_{+}\}$ is an orthonormal basis of $V_{0}$, then $\{\phi_{j,\lambda} : \lambda\in P_{+}\}$ constitutes an orthonormal basis of $V_{j}$. We shall now analyze the further required properties of \cref{def:mra:mra} in this case. We start with the condition 
that the scale spaces should be nested. 

\begin{proposition}\label{prop:mra:prop48analogon}
    For $\phi\in L^2(\overline{\a_{+}}, \omega)$ satisfying (RB) and the spaces $V_{j}$ defined above, the following statements are equivalent:
    \begin{enumerate}
        \item $V_{j}\subseteq V_{j+1}$ for all $j\in\Z$.
        \item  $V_{-1}\subseteq V_{0}$.
        \item  There exists a function $\gamma\in X$ such that $\phi$ satisfies the two-scale relation
            \begin{align*}
                \Delta(2\xi)\hkl\phi(2\xi) = \gamma(\xi)\Delta(\xi)\hkl\phi(\xi). 
            \end{align*}
    \end{enumerate}
    In this case, if we expand $\phi_{-1,0}$ in $V_0$ as $\phi_{-1,0}=\sum_{\lambda\in P_{+}}\alpha_{\lambda}\phi_{0,\lambda}$ with $\alpha=(\alpha_\lambda)_{\lambda\in P_{+}}\in\ell^{2}( P_{+})$, then $\,\gamma = c\cdot\sum_{\lambda\in P_{+}}\alpha_{\lambda}S_{\lambda}\,$ 
    with $\,c= 2^{-\frac{n^{2}}{2}}i^{|\Sigma_{+}|}\sqrt{n!}\,$. 
\end{proposition}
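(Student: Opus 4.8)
The plan is to translate both the nesting conditions and the two-scale relation onto the Hankel transform side, where by \cref{lem:mra:analogonLemma45} membership in $V_0=V_\phi$ amounts to a factorization $\hkl f=\tfrac{\Delta\beta}{\pi}\,\hkl\phi$ with $\beta\in S$, and to use throughout that the unitary dilations act by $\hkl(D_af)(\xi)=a^{m/2}\hkl f(a\xi)$ (from \eqref{eqn:hklVsDilation}) together with the homogeneity $\pi(a\xi)=a^{|\Sigma_+|}\pi(\xi)$ and the elementary identity $m/2-|\Sigma_+|=n^2/2$, which follows from $m=2n^2-n$ and $|\Sigma_+|=\tfrac12 n(n-1)$. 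For the equivalence $(1)\Leftrightarrow(2)$ I would only invoke that the $D_a$ are unitary with $D_aD_b=D_{ab}$: since $V_j=D_{2^{-j}}V_0$, applying $D_{2^{j+1}}$ to the inclusion $V_j\subseteq V_{j+1}$ shows that, for every $j$, it is equivalent to the single inclusion $D_2V_0\subseteq V_0$; as $V_{-1}=D_2V_0$, this is exactly $(2)$. It then remains to connect $(2)$ with the two-scale relation $(3)$.

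For $(2)\Rightarrow(3)$ I would use that $V_{-1}\subseteq V_0$ forces $\phi_{-1,0}\in V_0$, so \cref{lem:mra:analogonLemma45} provides $\alpha\in\ell^2(P_+)$ with $\phi_{-1,0}=\sum_{\lambda}\alpha_\lambda\phi_{0,\lambda}$ and, with $\widetilde\alpha=\sum_\lambda\alpha_\lambda S_\lambda\in S$,
$$\hkl(\phi_{-1,0})(\xi)=\frac{\Delta(\xi)\widetilde\alpha(\xi)}{\pi(\xi)}\,\hkl\phi(\xi).$$
Computing the same transform directly from \eqref{eqn:hklVsDilation} and \eqref{eqn:radial:analogon43}, and using that the Schur polynomial of the empty partition is $1$, so that $S_0=1/(i^{|\Sigma_+|}\sqrt{n!})$ is constant, yields
$$\hkl(\phi_{-1,0})(\xi)=\frac{2^{m/2}S_0}{2^{|\Sigma_+|}}\,\frac{\Delta(2\xi)\,\hkl\phi(2\xi)}{\pi(\xi)}.$$
Equating these, cancelling $\pi(\xi)$ and solving for $\Delta(2\xi)\hkl\phi(2\xi)$ gives the two-scale relation with $\gamma=c\,\widetilde\alpha$, where $c=2^{|\Sigma_+|}/(2^{m/2}S_0)=2^{-n^2/2}i^{|\Sigma_+|}\sqrt{n!}$; this produces $(3)$ and the closing formula for $\gamma$ at once, and since the argument is a rearrangement of an almost-everywhere identity, no division by $\hkl\phi$ is needed, so the zero set of $\hkl\phi$ is harmless.

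For $(3)\Rightarrow(2)$ I would show that every generator $\phi_{-1,\lambda}$ lies in $V_0$; as $V_0$ is closed and $V_{-1}=\overline{\text{span}\,\{\phi_{-1,\lambda}:\lambda\in P_+\}}$, this yields $V_{-1}\subseteq V_0$. Computing $\hkl(\phi_{-1,\lambda})$ as above and substituting the two-scale relation for $\Delta(2\xi)\hkl\phi(2\xi)$ gives
$$\hkl(\phi_{-1,\lambda})(\xi)=\frac{\Delta(\xi)\,\beta_\lambda(\xi)}{\pi(\xi)}\,\hkl\phi(\xi),\qquad \beta_\lambda(\xi)=2^{n^2/2}\,S_\lambda(2\xi)\,\gamma(\xi),$$
so by \cref{lem:mra:analogonLemma45} it suffices to verify $\beta_\lambda\in S$. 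I expect this membership to be the main (though mild) obstacle: one must check that $\xi\mapsto S_\lambda(2\xi)$ is admissible as a multiplier on $S$. I would argue that it is a trigonometric polynomial, hence bounded, is $\mathcal S_n$-invariant, and is periodic with respect to $\pi\Z^n\supseteq I$, hence $I$-periodic; via the identification \eqref{identS}, multiplication by such a bounded, symmetric, $I$-periodic function preserves all three defining properties of $S=L^2(\mathbb T^n,|\Delta|^2)^{\mathcal S_n}$, and since $\gamma\in S$ we conclude $\beta_\lambda\in S$. This closes the chain $(3)\Rightarrow(2)\Rightarrow(1)$ and completes the proof.
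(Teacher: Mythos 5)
Your proposal is correct and follows essentially the same route as the paper: reduce $(1)\Leftrightarrow(2)$ by rescaling, compute $\hkl\phi_{-1,0}$ and $\hkl\phi_{-1,\lambda}$ via \eqref{eqn:hklVsDilation} and \eqref{eqn:radial:analogon43}, and apply \cref{lem:mra:analogonLemma45}, arriving at the same constant $c=2^{-n^2/2}i^{|\Sigma_+|}\sqrt{n!}$ and the same choice $\gamma_\lambda=\mathrm{const.}\,S_\lambda(2\,\cdot)\gamma$. Your explicit verification that $S_\lambda(2\,\cdot)$ is a bounded, $\mathcal S_n$-invariant, $L$-periodic (hence $I$-periodic) multiplier on $S$ is a detail the paper asserts without comment, and it is correct.
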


\begin{proof}
    The equivalence of (1) and (2) is immediate by rescaling. Suppose now $V_{-1}\subseteq V_{0}$. Then by \cref{lem:mra:analogonLemma45} shows that
    \begin{align*}
        \hkl\phi_{-1, 0} = \frac{\Delta\beta}{\pi}\hkl\phi 
    \end{align*}
    with some $\beta\in S$. On the other hand, using equations \eqref{eqn:radial:analogon43} and \eqref{eqn:hklVsDilation} we 
    calculate
    \begin{align*}
        \hkl\phi_{-1,0}(\xi) &= \hkl(D_{2}M_{0}T^{(0)}\phi)(\xi)\,=\,
          D_{1/2}\bigg(\frac{\Delta S_{0}}{\pi}\hkl\phi\bigg)(\xi)\\
   &= 2^{m/2}\, \frac{\Delta(2\xi)S_{0}(2\xi)}{\pi(2\xi)} \hkl\phi(2\xi)\,
   = \frac{2^{n^2/2}}{i^{|\Sigma_+|}\sqrt{n!}}\frac{\Delta(2\xi)}{\pi(\xi)}\hkl\phi(2\xi).
    \end{align*}
    We conclude that 
    \begin{align}\label{eqn:mra:proofStep:twoScaleRelation0}
    \Delta(2\xi) \hkl\phi(2\xi) = c\cdot\beta(\xi) \Delta(\xi)\hkl\phi(\xi)
    \end{align} 
    with $c$ as stated and obtain the desired result with $\gamma = c\beta\in X$. If $\phi_{-1, 0}=\sum_{\lambda\in P_{+}} \alpha_{\lambda}\phi_{0, \lambda}$, we employ \cref{lem:mra:analogonLemma45} and see that $\beta = c^{-1}\gamma = \sum_{\lambda\in P_{+}}\alpha_{\lambda}S_{\lambda}$.

    Conversely, assume that (3) is satisfied. 
    Performing the same calculation as above for $\phi_{-1, \lambda}$ leads to
    \begin{align*}
      \hkl\phi_{-1,\lambda}=\text{const.}\, \frac{S_{\lambda}(2\xi)}{\pi(\xi)}\Delta(2\xi)\hkl\phi(2\xi) 
    \end{align*}
    and an application of  \cref{lem:mra:analogonLemma45} shows that $\phi_{-1,\lambda}\in V_{0}$ if and only if
    \begin{align}\label{eqn:mra:proofStep:twoScaleRelationLambda}
      S_\lambda(2\xi)\Delta(2\xi)\hkl\phi(2\xi) = \gamma_{\lambda}(\xi)\Delta(\xi)\hkl\phi(\xi) 
    \end{align}
    for some $\gamma_{\lambda}\in X$. 
    Recall that $\phi$ satisfies the two-scale relation (3). Choosing
    \begin{align*}
      \gamma_{\lambda}(\xi)\coloneqq \text{const.}\, \gamma(\xi) S_{\lambda}(2\xi)\in X
    \end{align*}
    we thus conclude $\phi_{-1, \lambda}\in V_0$ and therefore $V_{-1}\subseteq V_{0}$.
\end{proof}

\begin{theorem}\label{thm:mra:schnittSkalenrme}
  Let $\phi\in L^2(\overline{\a_{+}}, \omega)$ satisfy condition (RB) and assume that the scale spaces are nested, i.e. $V_{j}\subseteq V_{j+1}$ for all $j\in \Z$. Suppose further that $|\hkl\phi|$ is continuous at $0$. Then $(V_{j})_{j\in\Z}$ is a radial MRA if and only if $\hkl\phi(0)\neq 0$. Moreover, if $\phi$ is an orthonormal scaling function then $|\hkl\phi(0)|=1$.
\end{theorem}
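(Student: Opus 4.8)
The plan is to reduce the statement to the two MRA axioms of \cref{def:mra:mra} that are not yet available, namely $\bigcap_{j}V_j=\{0\}$ and the density of $\bigcup_jV_j$: axioms (1), (4), (5) already hold by the hypothesis of nestedness, by construction of the $V_j$ as dilates of $V_0$, and by condition (RB). First I would orthonormalize. Passing from $\phi$ to $\phi^*$ of \cref{thm:mra:orthogonalization} leaves every scale space unchanged ($V_j=V_{\phi^*}$) and turns $\{\phi_{j,\lambda}\}_{\lambda\in P_+}$ into an orthonormal basis of $V_j$; moreover, since $A\le P_\phi\le B$ and $\hkl\phi^*=\hkl\phi/\sqrt{P_\phi}$, we have $\hkl\phi(0)\neq0\Leftrightarrow\hkl\phi^*(0)\neq0$, while $|\hkl\phi^*|$ stays squeezed between multiples of $|\hkl\phi|$ near $0$. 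Hence I may assume throughout that $P_j=\sum_{\lambda\in P_+}\langle\,\cdot\,,\phi_{j,\lambda}\rangle\phi_{j,\lambda}$ is the orthogonal projection onto $V_j$.

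The heart of the proof is to compute $\lim_{j\to\pm\infty}\|P_jf\|_2^2$ for $f$ in the dense class with $\hkl f\in C_c$. Using that $\hkl$ is unitary on $L^2(\overline{\a_+},\omega)$ together with \eqref{eqn:radial:analogon43} and \eqref{eqn:hklVsDilation}, I would write $\langle f,\phi_{j,\lambda}\rangle$ as an integral of $\overline{S_\lambda}$ against a fixed function, substitute $\xi=2^j\eta$, and use the homogeneity of $\omega=\pi^2$ to arrive at a clean expression $\langle f,\phi_{j,\lambda}\rangle=\tfrac{1}{n!}\langle\hat u_j,S_\lambda\rangle_S$, where $\hat u_j$ is the $I$-periodization of a suitable dilate of $\hkl f(2^j\,\cdot)$ times $\pi\,\overline{\hkl\phi}/\Delta$. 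Because $\{S_\lambda\}_{\lambda\in P_+}$ is an orthonormal basis of $S=L^2(\mathbb T^n,|\Delta|^2)^{\mathcal S_n}$, Parseval then gives $\|P_jf\|_2^2=\tfrac{1}{(n!)^2}\|\hat u_j\|_S^2$. As $j\to+\infty$ the factor $\hkl f(2^j\eta)$ forces $\eta$ into a shrinking neighbourhood of the origin, so only the $q=0$ term of the periodization survives; invoking $\Delta(\eta)/\pi(\eta)\to i^{|\Sigma_+|}$ and $|\Delta(\eta)|^2/\omega(\eta)\to1$ as $\eta\to0$, together with the continuity of $|\hkl\phi|$ at $0$, I would obtain $\lim_{j\to+\infty}\|P_jf\|_2^2=c\,|\hkl\phi(0)|^2\,\|f\|_2^2$ with a positive normalization constant $c$. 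The same computation with $\hkl f$ supported away from $0$, so that $\eta$ is instead driven to infinity, yields $\|P_jf\|_2\to0$ as $j\to-\infty$.

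From these two limits the conclusions follow formally. The vanishing $\|P_jf\|_2\to0$ as $j\to-\infty$ holds on a dense class and hence, by $\|P_j\|\le1$, for every $f$; applied to $f\in\bigcap_jV_j$, where $\|P_jf\|_2=\|f\|_2$, it forces $\bigcap_jV_j=\{0\}$ unconditionally. For density, the increasing family $P_j$ converges strongly to the orthogonal projection $P_\infty$ onto $\overline{\bigcup_jV_j}$, so $\|P_\infty f\|_2^2=c\,|\hkl\phi(0)|^2\|f\|_2^2$ for all $f$ by continuity. Since $P_\infty$ is an orthogonal projection, the scalar $c\,|\hkl\phi(0)|^2$ can only be $0$ or $1$ (testing on $\operatorname{ran}P_\infty$); thus $\overline{\bigcup_jV_j}=L^2(\overline{\a_+},\omega)$ precisely when $c\,|\hkl\phi(0)|^2=1$, i.e. precisely when $\hkl\phi(0)\neq0$. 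This shows that $(V_j)_{j\in\Z}$ is a radial MRA if and only if $\hkl\phi(0)\neq0$.

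For the final assertion, if $\phi$ is itself an orthonormal scaling function of the MRA, then density holds and the limit above equals $\|f\|_2^2$, so $c\,|\hkl\phi(0)|^2=1$; evaluating the normalization constant $c$ in the orthonormal case, where $P_\phi=1$, then pins down $|\hkl\phi(0)|=1$. I expect the main obstacle to be exactly the limit computation of the second paragraph: justifying that the periodized sum $\hat u_j$ concentrates near the origin, interchanging the limit $j\to\pm\infty$ with the integral, and matching the polynomial weight $\omega=\pi^2$ against the periodic weight $|\Delta|^2$ near $0$ carefully enough to evaluate $c$.
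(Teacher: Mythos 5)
Your treatment of the density half ($j\to+\infty$) is essentially the paper's argument: after substituting $\xi=2^j\eta$, the periodization has a single surviving term once $2^jD\supseteq K$, and one arrives at $\sum_{\lambda}|\langle f,\phi_{j,\lambda}\rangle|^2=\int_K|\hkl f(\xi)|^2|\hkl\phi(2^{-j}\xi)|^2\omega(\xi)\,d\xi$. (In fact the weights cancel exactly, $|\pi/\Delta|^2|\Delta|^2=\omega$, so no limits $\Delta/\pi\to i^{|\Sigma_+|}$ or $|\Delta|^2/\omega\to 1$ are needed and your constant is $c=1$.) Two caveats here. First, you orthonormalize at the outset and then use continuity of $|\hkl{\phi^{*}}|$ at $0$; but $\hkl{\phi^{*}}=\hkl\phi/\sqrt{P_\phi}$ need not inherit this, since $P_\phi$ is only measurable and pinched between $A$ and $B$. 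The paper therefore runs the density argument with the original $\phi$ and the two-sided Riesz estimate $A\sum_\lambda|\langle f,\phi_{j,\lambda}\rangle|^2\le\|P_jf\|_2^2\le B\sum_\lambda|\langle f,\phi_{j,\lambda}\rangle|^2$, which suffices for both implications. Second, your closing deduction ``$\|P_\infty f\|_2^2=\kappa\|f\|_2^2$ for all $f$, and testing on $\operatorname{ran}P_\infty$ shows $\kappa\in\{0,1\}$'' actually forces $\kappa=1$ whenever $V_0\neq\{0\}$, which would make the claimed equivalence degenerate; this is a symptom of relying on an exact limit identity that is only available in the orthonormal case you do not control.

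The genuine gap is the assertion that ``the same computation'' yields $\|P_jf\|_2\to0$ as $j\to-\infty$. It is not the same computation: for $j<0$ the support $2^{-j}K$ of $u_j$ meets on the order of $2^{|j|n}$ fundamental domains, so the periodized sum $\hat u_j$ has massively overlapping terms whose cross terms you never control, and ``$\eta$ driven to infinity'' gives you nothing pointwise since $\hkl\phi$ is only known to lie in $L^2(\overline{\a_{+}},\omega)$. This step is precisely where the paper invests almost all of its effort, and it does so in physical space: it bounds $\sum_{\lambda}M_\lambda^2|T^{(\lambda)}\phi^{*}(\xi)|^2$ over the shrinking set $2^jK$ using the explicit density of $\delta_{\lambda+\rho}\ast\delta_\xi$ from \cref{prop:dichteTranslation}, the support statement of \cref{supp}, the monotonicity of $T$ from \cref{lem:eigenschaftenT}, and finally the pairwise disjointness of the sets $\lambda+\rho+2^j(A+B)$ for $j$ sufficiently negative. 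Your Fourier-side route can in fact be completed, but by a different mechanism than the one you name: estimate $|\hat u_j|^2\le N_j\sum_{q\in I}|u_j(\cdot+q)|^2$ with $N_j\le C_K2^{-jn}$ overlapping lattice translates, integrate against $|\Delta|^2$ over the torus to recover $\int_{\mathbb R^n}|u_j|^2|\Delta|^2$, and use the homogeneity of $\omega$ to extract a net factor $2^{j(m-n)}=2^{2j|\Sigma_+|}\to0$; this exploits $|\Sigma_+|>0$ and is an additional argument you would have to supply, not a repetition of the $j\to+\infty$ calculation.
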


\begin{proof} The idea is the same as in the proof of \cite{roslerRauhut}, Thm. 4.9, but now the proof is much more involved, as sufficient knowledge of the generalized translation will be needed, which was simple and explicit in \cite{roslerRauhut}.  To start with,  we use \cref{thm:mra:orthogonalization} and obtain an orthonormal scaling function $\phi^{*}\in L^2(\overline{\a_{+}}, \omega)$ with the same scale spaces. In order to prove that  $\,\bigcap_{j=-\infty}^{\infty}V_{j}=\{0\}\,$ we have to show $\,\lim_{j\to-\infty}\|P_{j}f\|_{2} = 0$ for all $f\in L^2(\overline{\a_{+}}, \omega)$, where
    \begin{align*}
        P_{j}\colon L^2(\overline{\a_{+}}, \omega)\to V_{j},\quad f\mapsto \sum_{\lambda\in P_{+}}\langle f,\phi^{*}_{j,\lambda}\rangle\, \phi^{*}_{j,\lambda} 
    \end{align*}
    denotes the orthogonal projection and $\phi^{*}_{j,\lambda}$ is defined as in \eqref{eqn:mra:phiJLambda}. As continuous functions with compact support are dense in $L^2(\overline{\a_{+}}, \omega)$ we may assume that $f\in C_{c}(\overline{\a_{+}}).$ Denoting $K:= \text{supp}f$ and using Parseval's identity we obtain
    \begin{align*}
        \|P_{j}f\|_{2}^{2} &\ =\ \sum_{\lambda\in P_{+}} |\langle f, \phi^{*}_{j,\lambda}\rangle|^{2} \\
                           &\ \leq\ \|f\|_{2}^{2}\cdot \int_{K}\sum_{\lambda\in P_{+}}|\phi^{*}_{j,\lambda}(\xi)|^{2}\ \omega(\xi)d\xi\\ 
                           &\ =\,\text{const.} \int_{2^{j}\!K} \sum_{\lambda\in P_{+}} M_{\lambda}^{2}\, |T^{(\lambda)}\phi^{*}(\xi)|^{2}\ \omega(\xi)d\xi\\
                           &\ =\, \text{const.}\int_{2^{j}\!K\cap \a_{+}}\sum_{\lambda\in P_{+}} M_{\lambda}^{2}\ |(\delta_{\lambda+\rho}*\delta_{\xi})(\phi^{*})|^{2}\ \omega(\xi)d\xi,                       
    \end{align*}
   where above and in the sequel, const. denotes a varying positive constant depending on $f$ and $n$ only.  Observe that $\lambda+\rho\in \a_{+}$ for all $\lambda \in  P_{+}$ and therefore  \cref{prop:dichteTranslation} applies to $\delta_{\lambda+\rho} \ast \delta_\xi\,$ in the integral above.  We may further assume that $j\in \mathbb Z$ is negative and already so small that $\lambda+\rho + C(\xi) \subseteq \a_{+}$ for all $\lambda\in P_+.$ Hence,  by Lemma \ref{supp}, $\text{supp}(\delta_{\lambda+\rho}\ast\delta_\xi) \in \lambda+\rho + C(\xi)   
   $ for all $\lambda.$  
        In view of \cref{prop:dichteTranslation} we  obtain
   \begin{equation}\label{firstid} (\delta_{\lambda+\rho}*\delta_{\xi})(\phi^{*}) = \int_{\lambda+\rho+C(\xi)} \phi^{*}(x) \frac{\pi(\rho)\pi(x)}{\pi(\lambda+\rho)\pi(\xi)} \sum_{v, w\in \mathcal S_{n}}\sign(v)\sign(w)T(v\xi+w(\lambda+\rho)-x)\ dx.\end{equation}
     Here $dx$ denotes the Lebesgue volume in the affine hyperplane
    $\lambda^{1}+\xi^{1}+\R^{n}_{0}\ \subseteq\ \R^{n}\,$ which contains $\lambda+\rho+C(\xi),$ where still $\xi \in 2^jK\cap \a_{+}$ with  $j<0.$ Recall at this point  the decomposition $\mathbb R^n = \mathbb R_0^n \oplus \mathbb R\underline 1,$ with orthogonal projections $x^0$ of $x\in \mathbb R^n$ onto $\mathbb R_0^n$ and $x^1$ onto $\mathbb R\underline 1.$    
   Using Lemmata \ref{lem:eigenschaftenT} and \ref{lem:eigenschaftenOrdnung} we further see that for all 
   $x\in \lambda+\rho+C(\xi)$ and $v, w\in \mathcal S_{n}$, 
       \begin{align*}
        T(v\xi+w(\lambda+\rho)-x) \leq T(\lambda+\rho+\xi-x). 
    \end{align*}
    Moreover, as $\,x-(\lambda+\rho) \in C(\xi),$ we may estimate the argument of $T$ as  $\, \|x-(\lambda+\rho) -\xi\|_2 \leq 2 \|\xi\|_2\leq \sup_{x\in K}\!\|x\|_2\,.$ By its properties noted in Lemma \ref{lem:eigenschaftenT}, $T$ is therefore uniformly bounded on the domain of integration in \eqref{firstid}, i.e. with a bound independent of $\lambda$ and $\xi$. 
        By the Cauchy-Schwarz inequality and recalling that $\pi^2 = \omega,$  we  may therefore estimate
    \begin{align*}
        |(\delta_{\lambda+\rho}*\delta_{\xi})(\phi^{*})|^{2} \,
             &\leq\,\frac{\text{const.}}{\pi(\lambda+\rho)^{2}\omega(\xi)} \int_{\lambda+\rho+C(\xi)} |\phi^{*}(x)|^{2}\, \omega(x)\, dx, 
    \end{align*}
        where the constant is again independent of $\lambda$ and $\xi.$
    Now fix $R>0$ such that $ K\subseteq A+B\, $ with 
    $$ A= \{x=x^0\in \mathbb R_0^n \cap \overline{\a_{+}}: \|x\|_2\leq R\}, \quad B= \{x=x^1\in \mathbb R\underline 1 \cap  \overline{\a_{+}}: \|x\|_2\leq R\}$$ and recall that $M_{\lambda}=\frac{\pi(\lambda+\rho)}{\sqrt{n!}\prod_{k=1}^{n-1}k!}$.    
     Then we may continue our previous estimate as follows:
    \begin{align*}
        \|P_{j}f\|_{2}^{2} \,\leq \,\text{const.} \int_{2^{j}\!(A+B)}\sum_{\lambda\in P_{+}}\Bigl(\int_{\lambda+\rho+C(\xi)}|\phi^{*}(x)|^{2}\omega(x)dx\Bigr) d\xi.
    \end{align*}
        Noting that $\,C(\xi) = \xi^1 + C(\xi^0)$ with $\,C(\xi^0)\subseteq 2^j\!A, \,\xi^1 \in 2^j\!B,$ we conclude that
           \begin{align*}
        \|P_{j}f\|_{2}^{2} \leq\,& \text{const.}\int_{2^j\!A} \sum_{\lambda\in P_+}\Bigl(\int_{2^jB} \int_{\lambda+\rho+\xi^1+C(\xi^0)} |\phi^{*}(x)|^{2}\omega(x) dx\,d\xi^1\Bigr)d\xi^0\\
        \leq \,& \text{const.} \int_{2^j\!A}\sum_{\lambda\in P_+}\Bigl(\int_{\lambda+\rho+2^j(A+B)}|\phi^{*}(t)|^{2}\omega(t) dt\Bigr)d\xi^0.       \end{align*}
      Choosing $j\in \mathbb Z$ sufficiently small, we can achieve that the sets $\lambda+\rho+2^j(A+B)$ with $\lambda\in P_{+}$ are pairwise disjoint. This finally leads to the estimate
     $$  \|P_{j}f\|_{2}^{2} \leq\, \text{const.}\int_{2^j\!A}\Bigl(\int_{\overline{\a_{+}}}|\phi^{*}(t)|^{2}\omega(t) dt\Bigr)d\xi^0.     $$       
        The inner integral being finite, it follows that $\|P_jf\|_2 \to 0$ for $j\to -\infty$ as claimed. 
        This finishes the proof of condition (2) in the definition of a radial MRA, which is satisfied without requirements on $\mathcal H\phi.$ 

    It remains to analyze condition (3) concerning the density of $\,\bigcup_{j=-\infty}^{\infty}V_{j}\,$ in $L^2(\overline{\a_{+}}, \omega)$. Here we closely follow \cite{roslerRauhut}. Suppose first that $\hkl\phi(0)\neq 0$ and let $h\in \big(\bigcup_{j=-\infty}^{\infty}V_{j}\big)^{\perp}$, i.e. $P_{j}h = 0$ for all $j\in\Z$. Let $\varepsilon >0.$ By the Plancherel theorem for the Hankel transform, we can find  $f\in L^2(\overline{\a_{+}}, \omega)$ such that $\mathcal H f$  has compact support and $\|f-h\|_{2}\leq \varepsilon$. This implies
    \begin{align*}
        \|P_{j}f\|_{2} = \|P_{j}(f-h)\|_{2} \leq \varepsilon 
    \end{align*}
    for all $j\in\Z$. By the Riesz basis assumption on $\phi$, we further have
    \begin{align*}
        A\sum_{\lambda\in P_{+}} |\langle f, \phi_{j,\lambda}\rangle|^{2}\leq \|P_{j}f\|_{2}^{2}\leq B\sum_{\lambda\in P_{+}} |\langle f, \phi_{j,\lambda}\rangle|^{2}. 
    \end{align*}
    Suppose now that $\text{supp}\,\hkl f \subseteq K$ with some compact $K\subseteq\overline{\a_{+}}$. Then  in view of \eqref{eqn:radial:analogon43},     \begin{align*}
        \langle f, \phi_{j,\lambda}\rangle &= \langle \hkl f,\ \hkl \phi_{j,\lambda}\rangle \\
              &= \int_{K} \hkl f(\xi)\, \overline{\rho_{\lambda}^{j}\, \hkl\phi(2^{-j}\xi)}\ \omega(\xi)\ d\xi
    \end{align*}
    with the dilates $\,\rho^{j}_{\lambda}\coloneqq D_{2^{j}}(\frac{\Delta S_{\lambda}}{\pi}),\, \lambda\in P_{+}$. The weight $\omega$ being homogeneous,  they form an orthonormal basis of  the space $\,X_{j}\coloneqq L^{2}(2^{j}D, \omega)^{\mathcal S_{n}}$, where $$D= [0,2\pi[^n.$$   Assume $j$ is sufficiently large, so that  $2^{j}D\supseteq K$. Then 
    \begin{align*}
        \langle f, \phi_{j,\lambda}\rangle = \langle \hkl f\,\overline{\hkl\phi(2^{-j}\cdot)},\rho^{j}_{\lambda}\,\rangle_{X_{j}}.
    \end{align*}
    Thus using Parseval's equation for $X_{j}\cong L^{2}(2^{j}D\cap\overline{\a_{+}},\, \omega)$ we obtain 
    \begin{align}\label{eqn:mra:komischeGleichungImBeweisVereinigung}
        \sum_{\lambda\in P_{+}} |\langle f,\phi_{j,\lambda}\rangle|^{2} = \|\hkl f\, \hkl\phi(2^{-j}\cdot)\|_{X_{j}}^{2} = \int_{K} |\hkl f(\xi)|^{2} |\hkl\phi(2^{-j}\xi)|^{2}\ \omega(\xi)\, d\xi.
    \end{align}

    As we assumed that $|\hkl\phi|$ is continuous at $0$, the functions $|\hkl\phi(2^{-j}\cdot)|$ converge to the constant $|\hkl\phi(0)|>0$ uniformly on $K$ as $j\to+\infty$. Hence
    \begin{align*}
        \varepsilon \geq \limsup_{j\to\infty} \|P_{j}f\|_{2}\geq \sqrt{A}\,|\hkl\phi(0)|\,\|\hkl f\|_{2} \,\geq \,\sqrt{A}\,|\hkl\phi(0)|(\|h\|_{2}-\varepsilon). 
    \end{align*}
    But $\varepsilon > 0$ was arbitrary, so that $h=0$. This proves that  $\,\bigcup_{j=-\infty}^{\infty}V_{j}$ is dense in $L^2(\overline{\a_{+}}, \omega)$.

    Conversely, suppose that $\,\bigcup_{j=-\infty}^{\infty} V_{j}$ is dense in $L^2(\overline{\a_{+}}, \omega)$. Then
    \begin{align*}
        \lim_{j\to\infty} P_{j}f = f 
    \end{align*}
    for all $f\in L^2(\overline{\a_{+}}, \omega)$. If $\hkl f$ is compactly supported, the same calculation as before gives
    \begin{align*}
        \lim_{j\to\infty} \|P_{j}f\|_{2} \leq \sqrt{B}\,|\hkl\phi(0)|\,\|\hkl f\|_{2},
    \end{align*}
    which enforces $\hkl\phi(0)\neq 0$.  

    Finally, if the $\phi_{j, \lambda}, \lambda\in P_{+}$ are orthonormal,  we may choose $A=B=1$ and calculate
    \begin{align*}
        \|f\|_{2}=\lim_{j\to\infty} \|P_{j}f\|_{2}=|\hkl\phi(0)|\,\|\hkl f\|_{2} = |\hkl\phi(0)|\,\|f\|_{2},
    \end{align*}
    so that $|\hkl\phi(0)|=1$.
\end{proof}

Suppose that $\phi\in L^2(\overline{\a_{+}}, \omega)$ is an orthonormal scaling function of a radial MRA. Then according to  \cref{prop:mra:prop48analogon}, it satisfies the  two-scale relation
\begin{align*}
    \Delta(2\xi)\hkl\phi(2\xi) = \gamma(\xi)\Delta(\xi)\hkl\phi(\xi)
\end{align*}
with some $\gamma\in S$. 

\begin{definition} We introduce  the \emph{filter function}
\begin{align}\label{filter}
  G(\xi) \coloneqq \frac{\gamma(\xi)\Delta(\xi)}{\Delta(2\xi)}.
\end{align}
\end{definition}

Involving  $G,$ the above two-scale relation becomes  
$$\hkl\phi(2\xi)=G(\xi)\hkl\phi(\xi).$$ 
Note that $G$ is  $\mathcal S_{n}$-invariant and that $|G|$ is $I$-periodic.  Let us now also consider the finer lattice 

\begin{equation}\label{lattice}L\coloneqq \frac{1}{2}I = \pi\Z^{n} \supseteq I = 2\pi \mathbb Z^n.\end{equation}

\begin{lemma}\label{lem:mra:quadraturFilter}
    Suppose that $\phi\in L^2(\overline{\a_{+}}, \omega)$ is an orthonormal scaling function of a radial MRA. Then the associated filter function $G$ satisfies
    \begin{align*}
      \sum_{p\in L/I}|G(\xi+p)|^{2}\,= \,1 
    \end{align*}
    almost everywhere. As a consequence, $G$ is essentially bounded and contained in $L^{2}(D)$. 
\end{lemma}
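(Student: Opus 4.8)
The plan is to exploit the orthonormality hypothesis through \cref{kor:mra:analogonKorr43}, which gives $P_{\phi}=1$ almost everywhere, together with the two-scale relation $\hkl\phi(2\xi)=G(\xi)\hkl\phi(\xi)$. I would start from the definition $P_{\phi}(\eta)=\frac{1}{n!}\sum_{q\in I}|\hkl\phi(\eta+q)|^{2}$ and compute $P_{\phi}(2\xi)$, feeding in the two-scale relation. Here $\hkl\phi$ is evaluated at $2\xi+q$ with $q$ ranging over $I=2\pi\Z^{n}$; substituting $q=2\ell$ makes $\ell=q/2$ range bijectively over the finer lattice $L=\pi\Z^{n}$, with $2\xi+q=2(\xi+\ell)$. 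This is precisely the step that brings in the finer lattice $L$, and applying the two-scale relation termwise gives $P_{\phi}(2\xi)=\frac{1}{n!}\sum_{\ell\in L}|G(\xi+\ell)|^{2}\,|\hkl\phi(\xi+\ell)|^{2}$. Since all terms are nonnegative, Tonelli's theorem justifies every rearrangement.

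Next I would split the sum over $L$ along the cosets of $I$, writing each $\ell\in L$ uniquely as $\ell=p+q'$ with $p$ a representative of $L/I$ (a group isomorphic to $(\Z/2\Z)^{n}$, so of order $2^{n}$) and $q'\in I$. Because $|G|$ is $I$-periodic, we have $|G(\xi+p+q')|=|G(\xi+p)|$, so the factor $|G(\xi+p)|^{2}$ pulls out of the inner sum over $q'$. What remains is $\sum_{q'\in I}|\hkl\phi(\xi+p+q')|^{2}=n!\,P_{\phi}(\xi+p)$, which absorbs the prefactor $\frac{1}{n!}$ and leaves the identity
\[
  P_{\phi}(2\xi)=\sum_{p\in L/I}|G(\xi+p)|^{2}\,P_{\phi}(\xi+p).
\]
Inserting $P_{\phi}=1$ a.e.\ now concludes the first claim: Lebesgue measure is invariant under $\xi\mapsto2\xi$ and under each of the finitely many translations $\xi\mapsto\xi+p$, so $P_{\phi}(2\xi)=1$ and $P_{\phi}(\xi+p)=1$ for almost every $\xi$, whence $\sum_{p\in L/I}|G(\xi+p)|^{2}=1$ a.e.

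For the consequence, the identity forces each summand to satisfy $|G(\xi+p)|^{2}\le1$; choosing the representative $p=0$ yields $|G(\xi)|\le1$ for almost all $\xi$, so $G$ is essentially bounded. As the fundamental domain $D=[0,2\pi[^{n}$ has finite measure, this gives $\int_{D}|G|^{2}\le|D|<\infty$, i.e.\ $G\in L^{2}(D)$. I expect the only real care to lie in the lattice bookkeeping --- the bijection $q\leftrightarrow q/2$ between $I$ and $L$, the coset splitting of $L/I$, and invoking the $I$-periodicity of $|G|$ at the right moment --- rather than in any analytic subtlety, since nonnegativity of all terms removes any convergence concerns.
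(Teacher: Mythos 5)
Your proposal is correct and follows essentially the same route as the paper: apply \cref{kor:mra:analogonKorr43} at $2\xi$, insert the two-scale relation $\hkl\phi(2\xi)=G(\xi)\hkl\phi(\xi)$ to pass from the lattice $I$ to the finer lattice $L$, split over the cosets $L/I$ using the $I$-periodicity of $|G|$, and recognize the inner sums as $n!\,P_\phi(\xi+p)=n!$. Your extra remarks on measure invariance under $\xi\mapsto 2\xi$ and the translations, and the deduction of essential boundedness, only make explicit what the paper leaves implicit.
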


\begin{proof}
    Using \cref{kor:mra:analogonKorr43} with $2\xi$ instead of $\xi$ and the two-scale relation, we get
    \begin{align*}
        1 &= \frac{1}{n!}\sum_{q\in I}|\hkl\phi(2\xi+q)|^{2}\,=\,
          \frac{1}{n!}\sum_{p\in L}|\hkl\phi(\xi+p)|^{2}|G(\xi+p)|^{2}\\
          &=\frac{1}{n!}\sum_{p\in L/I}|G(\xi+p)|^{2}\sum_{q\in I}|\hkl\phi((\xi+p)+q)|^{2}\,=\,\sum_{p\in L/I}|G(\xi+p)|^{2}.
    \end{align*}
\end{proof}

%----------------------------------------------------------------------------------------
%	Wavelets
%----------------------------------------------------------------------------------------

\section{Orthonormal Wavelets}\label{orthoWave}

Suppose we are given a radial MRA $(V_{j})_{j\in\Z}$ in $\text{Herm}(n)$ with orthonormal scaling function $\phi$. We define the wavelet space $W_{j}$ as the orthogonal complement of $V_{j}$ in $V_{j+1}$, i.e.
\begin{align*}
    V_{j+1} = V_{j}\oplus W_{j}. 
\end{align*}
As in classical multiresolution analysis (see e.g. \cite{Mal09}), the orthogonal projection of a radial function $f\in L^2(\text{Herm}(n))^{U(n)}$ onto $W_j$ gives the details of $f$ that appear at the resolution $2^{j+1}$ but disappear at the resolution $2^{j}.$
In this section, we will characterize orthonormal wavelets for the given radial MRA. That is,  we will give necessary and sufficient conditions for translations and dilations of functions $\psi^{1},\ldots, \psi^{r}\in L^2(\overline{\a_{+}}, \omega)$ in order to constitute an orthonormal basis of the wavelet space $W_{0}$. As 
$W_{j}=D_{2^{-j}}W_{0}\,,$ we obtain orthonormal bases of all spaces $W_j$ by dilation, and thus finally an orthonormal wavelet basis of
\begin{align*}
     L^2(\overline{\a_{+}}, \omega) = \mathop{\widehat\bigoplus}_{j\in\Z} W_{j}. 
\end{align*}

\begin{proposition}\label{prop:mra:onsCondition}
    For functions $\psi^{1},\ldots, \psi^{r-1}\in L^2(\overline{\a_{+}}, \omega)$ the following are equivalent:
    \begin{enumerate}
        \item $\{M_{\lambda}T^{(\lambda)}\psi^{i}\ :\ 1\leq i\leq r-1,\ \lambda\in P_{+}\}$ is an orthonormal system.
        \item The sum $$ P_{i, j}(\xi)\coloneqq \frac{1}{n!}\sum_{q\in I}\hkl\psi^{i}(\xi+q)\overline{\hkl\psi^{j}(\xi+q)}\,$$ is absolutely convergent with $P_{i,j}(\xi) = \delta_{i,j}$ for almost all $\xi\in\R^{n}.$
    \end{enumerate}
\end{proposition}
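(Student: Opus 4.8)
The plan is to reduce both directions to the single integral identity
$$\langle M_{\lambda}T^{(\lambda)}\psi^i, M_{\mu}T^{(\mu)}\psi^j\rangle = \int_{\mathbb T^n} S_{\lambda}\overline{S_{\mu}}\,P_{i,j}\,|\Delta|^{2}\,d\xi \qquad (\lambda,\mu\in P_{+}),$$
which is the exact polarized analogue of the computation carried out in the proof of \cref{prop:mra:analogonProp42}. First I would apply the Plancherel theorem together with \eqref{eqn:radial:analogon43} to rewrite the inner product as $\int_{\overline{\a_{+}}} \frac{\Delta S_{\lambda}}{\pi}\hkl\psi^i\,\overline{\frac{\Delta S_{\mu}}{\pi}\hkl\psi^j}\,\omega\,d\xi$; using $\omega=\pi^{2}$ (so that $\omega/|\pi|^{2}=1$) and the $\mathcal S_{n}$-invariance of the integrand, this extends to an integral over $\R^{n}$ with a factor $1/n!$. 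Since $S_{\lambda}$, $S_{\mu}$ and $|\Delta|^{2}$ are $I$-periodic, I would then fold the $\R^{n}$-integral onto the torus $\mathbb T^n$, which draws the lattice sum defining $P_{i,j}$ inside and yields the identity above.

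The convergence and interchange is where care is needed. I would first record the elementary pointwise bound $|\Delta(\xi)|^{2}=\prod_{i<j}4\sin^{2}\!\tfrac{\xi_i-\xi_j}{2}\le \prod_{i<j}(\xi_i-\xi_j)^{2}=\omega(\xi)$, so that $\hkl\psi^i\in L^{2}(\R^{n},\omega)$ forces $\hkl\psi^i\in L^{2}(\R^{n},|\Delta|^{2})$. By Cauchy--Schwarz in $\ell^{2}$ over the lattice $I$ this gives absolute convergence of the sum defining $P_{i,j}(\xi)$ for almost every $\xi$, shows $P_{i,j}\in L^{1}(\mathbb T^n,|\Delta|^{2})$, and---since $S_{\lambda},S_{\mu}$ are bounded on $\mathbb T^n$---legitimizes the folding via Fubini. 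I note that this settles the absolute-convergence clause of (2) unconditionally, independently of which of (1), (2) holds.

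The direction (2)$\Rightarrow$(1) is then immediate: substituting $P_{i,j}=\delta_{i,j}$ and invoking the orthonormality $\int_{\mathbb T^n}S_{\lambda}\overline{S_{\mu}}|\Delta|^{2} d\xi=\delta_{\lambda,\mu}$ of the $S_{\lambda}$ in $S$ gives $\langle M_{\lambda}T^{(\lambda)}\psi^i,M_{\mu}T^{(\mu)}\psi^j\rangle=\delta_{i,j}\delta_{\lambda,\mu}$. For (1)$\Rightarrow$(2) I set $Q\coloneqq P_{i,j}-\delta_{i,j}\in L^{1}(\mathbb T^n,|\Delta|^{2})$; the identity together with the hypothesis gives $\int_{\mathbb T^n}S_{\lambda}\overline{S_{\mu}}\,Q\,|\Delta|^{2} d\xi=0$ for all $\lambda,\mu\in P_{+}$. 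The main obstacle is upgrading these countably many relations to the pointwise statement $Q=0$ a.e.; here the mere completeness of $\{S_{\lambda}\}$ in $S$ does not suffice, since $S_\lambda Q$ need only lie in $L^{1}$, not $L^{2}$. I would instead argue by density: the span of $\{s_{\lambda}\overline{s_{\mu}}\}$ is a conjugation-closed subalgebra of $C(\mathbb T^n)^{\mathcal S_{n}}$ which contains the constants and separates $\mathcal S_{n}$-orbits (already the elementary symmetric functions of $e^{i\xi_{1}},\dots,e^{i\xi_{n}}$ do so), hence is uniformly dense in $C(\mathbb T^n)^{\mathcal S_{n}}$ by Stone--Weierstrass. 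Testing $\int gQ|\Delta|^{2}=0$ against a uniform approximant $g_k$ and bounding the error by $\|g-g_{k}\|_{\infty}\,\|Q\|_{L^{1}(|\Delta|^{2})}$ extends the relation to all $\mathcal S_{n}$-invariant $g\in C(\mathbb T^n)$, which forces $Q|\Delta|^{2}=0$ and thus $Q=0$ a.e., as $|\Delta|^{2}>0$ off the walls.
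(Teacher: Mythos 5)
Your proof is correct, and the central computation --- Plancherel plus \eqref{eqn:radial:analogon43}, the identity $\omega/|\pi|^2=1$, unfolding to $\R^n$ and refolding onto $\mathbb T^n$ to produce $\int_{\mathbb T^n}S_\lambda\overline{S_\mu}P_{i,j}|\Delta|^2d\xi$, and the immediate deduction of $(2)\Rightarrow(1)$ from orthonormality of the $S_\lambda$ --- is exactly the paper's. You diverge in two places, both legitimately. First, you establish the absolute convergence of the series defining $P_{i,j}$ \emph{unconditionally} from the pointwise bound $|\Delta|^2\leq\omega$ (so $\hkl\psi^i\in L^2(\R^n,|\Delta|^2)$) plus Cauchy--Schwarz in $\ell^2(I)$; the paper instead derives it only under hypothesis (1), using \cref{kor:mra:analogonKorr43} to get $P_{i,i}=1$ a.e.\ and hence $|P_{i,j}|\leq 1$ a.e. Second, for $(1)\Rightarrow(2)$ the paper exploits precisely that boundedness: since $|P_{j,i}|\leq 1$, the product $S_\lambda P_{j,i}$ lies in the Hilbert space $S$, is orthogonal to every $S_\mu$, hence vanishes, and then $\langle S_\lambda,P_{i,j}\rangle_S=0$ for all $\lambda$ forces $P_{i,j}=0$. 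Your route avoids the $L^\infty$ bound altogether by working in $L^1(\mathbb T^n,|\Delta|^2)$ and invoking Stone--Weierstrass for the conjugation-closed unital algebra spanned by the $s_\lambda\overline{s_\mu}$ (which is indeed an algebra by Littlewood--Richardson and separates $\mathcal S_n$-orbits via the elementary symmetric functions). This is heavier machinery than the paper needs, and you could have shortcut it by first observing, as the paper does, that (1) already gives $P_{i,i}=1$ a.e.\ via \cref{kor:mra:analogonKorr43}; but your concern that completeness of $\{S_\lambda\}$ in $S$ alone does not handle an $L^1$ multiplier is well taken, and your argument closes that gap correctly.
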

\begin{proof}
  Since the case $i=j$ is covered by \cref{kor:mra:analogonKorr43}, we may assume that $i\neq j$. Then the Plancherel theorem for the Hankel transform implies that 
    \begin{align*}
    \big\langle M_{\lambda}T^{(\lambda)}\psi^{i},\ M_{\mu}T^{(\mu)}\psi^{j}\big\rangle &= \Big\langle \frac{\Delta S_{\lambda}}{\pi}\hkl\psi^{i},\ \frac{\Delta S_{\mu}}{\pi}\hkl\psi^{j}\Big\rangle \\
  &= \int_{\overline{\a_{+}}} S_{\lambda}(\xi)\overline{S_{\mu}(\xi)}\hkl\psi^{i}(\xi)\overline{\hkl\psi^{j}(\xi)}\ |\Delta(\xi)|^{2} d\xi\\
    &= \frac{1}{n!}\int_{\R^{n}}S_{\lambda}(\xi)\overline{S_{\mu}(\xi)}\hkl\psi^{i}(\xi)\overline{\hkl\psi^{j}(\xi)}\ |\Delta(\xi)|^{2} d\xi\\
                 &=\int_{\mathbb T^n} S_{\lambda}(\xi)\overline{S_{\mu}(\xi)} P_{i,j}(\xi)\ |\Delta(\xi)|^{2} d\xi.
    \end{align*}
    This immediately gives $(2)\Rightarrow (1)$, since $(S_\lambda)_{\lambda \in P_+}$ is an orthonormal basis of $X=L^2(\mathbb T^n, |\Delta|^2)^{\mathcal S_n}.$  For the converse implication, note first that the series defining $P_{i,j}$ is absolutely convergent with $|P_{i,j}|\leq 1$ a.e., as a consequence of \cref{kor:mra:analogonKorr43}    and    the Cauchy-Schwarz inequality in $\ell^2(I).$ Moreover, 
    $P_{i,j}$ is $I$-periodic and $\mathcal S_n$-invariant (recall that the Hankel transform $\hkl\psi$ is $\mathcal S_{n}$-invariant). Note that $P_{i,j} = \overline{P_{j,i}}.$ By our assumption and the above calculation,
    \begin{align*}
      \big\langle S_{\lambda}P_{j,i}\,, S_{\mu}\rangle_{X}  = 0
    \end{align*}
   for all $\lambda, \mu\in P_+.$ Thus $S_{\lambda}P_{j,i}=0$ a.e. for every $\lambda$ and hence $\,\langle S_\lambda, P_{i,j}\rangle_X = 0\,$ for every $\lambda. $ 
   This implies that $P_{i,j} = 0$ a.e.
\end{proof}

Maintaining the above setting, we shall now characterize the space $W_{-1}\subseteq V_{0}$. 

\begin{proposition}\label{prop:mra:characterizeW1}
    For $f\in L^2(\overline{\a_{+}}, \omega)$ the following statements are equivalent:
    \begin{enumerate}
        \item $f\in W_{-1}$.
        \item  There is an element $\beta\in X$ such that $\, \displaystyle \hkl f(\xi)=\frac{\Delta(\xi)\beta(\xi)}{\pi(\xi)}\hkl\phi(\xi)\,$ and
            \begin{align*}
                (\beta(\xi+p) \delta(\xi+p))_{p\in L/I} \perp (\gamma(\xi+p)\delta(\xi+p))_{p\in L/I} 
            \end{align*}
            almost everywhere as vectors in $\C^{r},$ where  $L$ is the lattice \eqref{lattice},
            $$r=|L/I| = 2^{n}, \quad \delta(x)\coloneqq\frac{\Delta(x)}{\Delta(2x)}$$ 
            and $\gamma\in X$ is the function from the two-scale relation for $\phi$ in \cref{prop:mra:prop48analogon}.
    \end{enumerate}
\end{proposition}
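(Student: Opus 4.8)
The plan is to exploit the orthogonal splitting $V_0 = V_{-1}\oplus W_{-1}$, so that $f\in W_{-1}$ precisely when $f\in V_0$ and $f\perp V_{-1}$. The first condition is, by \cref{lem:mra:analogonLemma45}, exactly that $\hkl f = \frac{\Delta\beta}{\pi}\hkl\phi$ for some $\beta\in S$, which is the first requirement in (2); it remains to translate $f\perp V_{-1}$. As $\phi$ is an orthonormal scaling function, $\{\phi_{-1,\lambda}:\lambda\in P_+\}$ is an orthonormal basis of $V_{-1}$, so $f\perp V_{-1}$ is equivalent to $\langle f,\phi_{-1,\lambda}\rangle = 0$ for all $\lambda\in P_+$, and the whole task reduces to rewriting these numbers.

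First I would record, exactly as in the proof of \cref{prop:mra:prop48analogon} (using \eqref{eqn:radial:analogon43}, \eqref{eqn:hklVsDilation} and the homogeneity of $\pi$), that $\hkl\phi_{-1,\lambda}(\xi) = 2^{n^2/2}\frac{\Delta(2\xi)S_\lambda(2\xi)}{\pi(\xi)}\hkl\phi(2\xi)$. Inserting $\hkl f = \frac{\Delta\beta}{\pi}\hkl\phi$, the two-scale relation $\Delta(2\xi)\hkl\phi(2\xi) = \gamma(\xi)\Delta(\xi)\hkl\phi(\xi)$, the identity $\omega = \pi^2$, and the $\mathcal S_n$-invariance of the resulting integrand (to pass from $\overline{\a_+}$ to $\frac{1}{n!}\R^n$), Plancherel for $\hkl{}$ yields
\begin{align*}
 \langle f,\phi_{-1,\lambda}\rangle \,=\, \frac{2^{n^2/2}}{n!}\int_{\R^n}|\Delta(\xi)|^2\,\beta(\xi)\,\overline{\gamma(\xi)}\,\overline{S_\lambda(2\xi)}\,|\hkl\phi(\xi)|^2\,d\xi .
\end{align*}

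The core is a folding over the nested lattices $I\subseteq L$. Every factor above except $|\hkl\phi|^2$ is $I$-periodic, and $S_\lambda(2\cdot)$ is even $L$-periodic; summing over $I$-translates therefore produces the factor $\sum_{q\in I}|\hkl\phi(\xi+q)|^2 = n!\,P_\phi(\xi) = n!$ (\cref{kor:mra:analogonKorr43}), collapsing the integral to $2^{n^2/2}\int_D|\Delta|^2\beta\,\overline{\gamma}\,\overline{S_\lambda(2\cdot)}$ over $D=[0,2\pi[^n$. Splitting $D=\bigsqcup_{p\in L/I}([0,\pi[^n+p)$ and using $L$-periodicity of $S_\lambda(2\cdot)$ and $|\Delta(2\cdot)|$ together with $|\Delta(\xi+p)|^2 = |\delta(\xi+p)|^2|\Delta(2\xi)|^2$, the $p$-sum gathers into $\sum_{p\in L/I}|\delta(\xi+p)|^2\beta(\xi+p)\overline{\gamma(\xi+p)} = \langle b(\xi),g(\xi)\rangle_{\C^r}$, the Hermitian product of the two vectors $b(\xi)=(\beta(\xi+p)\delta(\xi+p))_{p\in L/I}$ and $g(\xi)=(\gamma(\xi+p)\delta(\xi+p))_{p\in L/I}$ from (2). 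After the substitution $\eta=2\xi$ on $[0,\pi[^n$ this reads
\begin{align*}
 \langle f,\phi_{-1,\lambda}\rangle \,=\, 2^{n^2/2-n}\int_{D}\overline{S_\lambda(\eta)}\,\Theta(\eta)\,|\Delta(\eta)|^2\,d\eta \,=\, 2^{n^2/2-n}\,\langle \Theta, S_\lambda\rangle_S ,
\end{align*}
where $\Theta(\eta)\coloneqq \langle b(\eta/2),g(\eta/2)\rangle_{\C^r}$ is $\mathcal S_n$-invariant and $I$-periodic.

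Finally I would show $\Theta\in S$ and conclude by completeness. The clean point is that $\gamma(\xi)\delta(\xi) = G(\xi)$, the filter function \eqref{filter}, so $g(\xi) = (G(\xi+p))_{p\in L/I}$ and hence $\|g(\xi)\|^2 = \sum_{p\in L/I}|G(\xi+p)|^2 = 1$ a.e. by \cref{lem:mra:quadraturFilter}; combined with Cauchy-Schwarz and the same refolding applied to $\|b\|^2$, this gives $\int_D|\Theta|^2|\Delta|^2 \leq 2^n\|\beta\|_S^2 < \infty$, i.e. $\Theta\in S$. Since $(S_\lambda)_{\lambda\in P_+}$ is an orthonormal basis of $S$, the condition $\langle f,\phi_{-1,\lambda}\rangle = 0$ for all $\lambda$ is equivalent to $\Theta = 0$ a.e., i.e. to $\langle b(\xi),g(\xi)\rangle_{\C^r} = 0$ a.e., which is precisely the orthogonality in (2); together with the membership criterion for $V_0$ this establishes the equivalence. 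I expect the main obstacle to be purely organizational: keeping track of which factors are $I$-periodic versus $L$-periodic across the two successive foldings (over $I$ and over $L/I$) and correctly accounting for the dilation $\eta = 2\xi$. The observation $\gamma\delta = G$ together with the quadrature identity $\sum_{p\in L/I}|G(\xi+p)|^2 = 1$ is what makes both the collapse to $\C^r$ and the integrability of $\Theta$ transparent.
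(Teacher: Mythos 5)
Your proof is correct and follows essentially the same route as the paper: both reduce $f\in W_{-1}$ to $f\in V_0$ plus orthogonality to $V_{-1}$, fold the resulting Plancherel integral first over $I$ (using $P_\phi=1$) and then over $L/I$ (using $|\Delta(\xi+p)|^2=|\delta(\xi+p)|^2|\Delta(2\xi)|^2$), and identify the inner sum as the Hermitian product of the two vectors in $\C^r$. The only difference is that you test against the orthonormal basis $\{\phi_{-1,\lambda}\}$ and invoke completeness of $(S_\lambda)$ in $S$ (with an explicit check that $\Theta\in S$ via $\gamma\delta=G$ and \cref{lem:mra:quadraturFilter}), whereas the paper tests against arbitrary $h\in V_{-1}$ and uses that $\beta(2\,\cdot)$ exhausts $L^2(D',|\Delta(2\,\cdot)|^2)^{\mathcal S_n}$ --- a cosmetic variation.
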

\begin{proof}
    By definition $V_{0}=W_{-1}\oplus V_{-1}$ and $V_{-1}=D_{2}V_{0}$, thus $h\in V_{-1}$ if and only if $D_{1/2}h\in V_{0}$. In view of \cref{lem:mra:analogonLemma45} this is equivalent to
    \begin{align*}
        \hkl(D_{1/2}h)(\xi) = \frac{\Delta(\xi)\eta(\xi)}{\pi(\xi)}\hkl\phi(\xi) 
    \end{align*}
    with some $\eta\in X$. From \cref{eqn:hklVsDilation} we obtain
    \begin{align*}
        2^{-\frac{m}{2}}\hkl h(\tfrac{\xi}{2}) = (D_{2}\hkl h)(\xi) = \hkl(D_{1/2}h)(\xi)
    \end{align*}
    and thus
    \begin{align*}
      h\in V_{-1} \quad\Longleftrightarrow\quad \hkl h(\xi) = \frac{2^{\frac{m}{2}}\Delta(2\xi)\eta(2\xi)}{\pi(2\xi)}\hkl\phi(2\xi) = \frac{\beta(2\xi)}{\pi(\xi)}\gamma(\xi)\Delta(\xi)\hkl\phi(\xi)
    \end{align*}
    for some $\beta\in X.$ Furthermore, $f\in V_{0}$ if and only if
    \begin{align*}
        \hkl f(\xi)=\frac{\Delta(\xi)\widetilde\beta(\xi)}{\pi(\xi)}\hkl\phi(\xi) 
    \end{align*}
    for some $\widetilde\beta\in X$. We denote $$ D' \coloneqq \frac{1}{2}D = [0,\pi[^{n},$$ which is a fundamental domain of the smaller torus $\R^{n}/L$. As $W_{-1}\perp V_{-1}$ and $\hkl$ is unitary, we have $f\in W_{-1}$ if and only if $\langle \hkl f, \hkl h\rangle = 0$ for all $h\in V_{-1}$, i.e. if and only if for all $\beta\in S$ we have
    \begin{align*}
      0 &= \int_{\overline{\a_{+}}} \frac{\widetilde\beta(\xi)}{\pi(\xi)}\,\frac{\overline{\beta(2\xi)}}{\overline{\pi(\xi)}}\overline{\gamma(\xi)} |\hkl\phi(\xi)|^{2}\, |\Delta(\xi)|^{2} \omega(\xi)d\xi\\
        &= \frac{1}{n!} \int_{\R^{n}}\widetilde\beta(\xi)\overline{\beta(2\xi)}\overline{\gamma(\xi)}|\hkl\phi(\xi)|^{2}\, |\Delta(\xi)|^{2} d\xi\\
        &= \frac{1}{n!} \sum_{p\in L/I}\sum_{q\in I} \int_{D'} \overline{\beta(2\xi)}\,\widetilde\beta(\xi+p)\overline{\gamma(\xi+p)}\, |\hkl\phi((\xi+p)+q)|^{2}\, |\Delta(\xi+p)|^{2} d\xi\\
        &= \int_{D'} \overline{\beta(2\xi)}\sum_{p\in L/I}\widetilde\beta(\xi+p)\Delta(\xi+p)\overline{\gamma(\xi+p)\Delta(\xi+p)}\, P_\phi(\xi + p)  d\xi\\
        &= \int_{D'} \overline{\beta(2\xi)}\sum_{p\in L/I}\widetilde\beta(\xi+p)\delta(\xi+p)\overline{\gamma(\xi+p)\delta(\xi+p)}\ |\Delta(2\xi)|^{2} d\xi.
    \end{align*}
    Here it was used that $P_\phi = 1$ a.e. and that $\beta(2\,\cdot)$ is $L$-periodic while $\widetilde \beta, \gamma$ and $|\Delta|$ are $I$-periodic.  Note that also $|\delta|^2$ is $I$-periodic and $\mathcal S_{n}$-invariant, and therefore the finite sum
    \begin{align*}
        \sum_{p\in L/I} \widetilde\beta(\xi+p)\delta(\xi+p)\overline{\gamma(\xi+p)\delta(\xi+p)} 
    \end{align*}
    is $\mathcal S_{n}$-invariant and $L$-periodic. Since the  $\beta(2\,\cdot),\ \beta\in S$ exhaust the space $L^{2}(D', |\Delta(2\,\cdot)|^{2})^{\mathcal S_{n}}$, we conclude that
$$
      \sum_{p\in L/I} \widetilde\beta(\xi+p)\delta(\xi+p)\overline{\gamma(\xi+p)\delta(\xi+p)} = 0 \quad \text{ a.e.}
 $$
\end{proof}

\begin{theorem}\label{thm:mra:orthogonaleWavelets}
    For elements $\psi^{1},\ldots,\psi^{r-1}\in L^2(\overline{\a_{+}}, \omega)$ the following are equivalent:
    \begin{enumerate}
        \item The set
            \begin{align*}
                \{M_{\lambda}T^{(\lambda)} \psi^{i}\ :\ 1\leq i\leq r-1,\ \lambda\in P_{+}\} 
            \end{align*}
            is an orthonormal basis  of $W_{0}$.
        \item The number $r$ is given by  $\, r= |L/I| = 2^{n},$ and
            \begin{align*}
              \hkl\psi^{i}(2\xi) = \beta^{i}(\xi) \delta(\xi)\hkl\phi(\xi) 
            \end{align*}
            for certain $\beta^{i}\in X$ such that the matrix
            \begin{align*}
                (\beta^{i}(\xi+p)\delta(\xi+p))_{0\leq i\leq r-1,\, p\in L/I}\in \C^{r\times r} 
            \end{align*}
            is unitary almost everywhere. Here $\beta^{0}\coloneqq \gamma \in X$ denotes the function from the two-scale relation of $\phi$ in \cref{prop:mra:prop48analogon}.
    \end{enumerate}
\end{theorem}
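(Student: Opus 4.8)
The plan is to transfer, via the Hankel transform and the periodisation identities already at our disposal, the three requirements on the system $\{M_\lambda T^{(\lambda)}\psi^i:1\le i\le r-1,\ \lambda\in P_+\}$ --- orthonormality, containment in $W_0$, and completeness in $W_0$ --- into statements about the rows of the matrix
\begin{align*}
 U(\xi)=\bigl(\beta^i(\xi+p)\,\delta(\xi+p)\bigr)_{0\le i\le r-1,\ p\in L/I}.
\end{align*}
It is convenient to adjoin the scaling function as a zeroth generator: setting $\psi^0:=\phi$ and $\beta^0:=\gamma$, the two-scale relation of \cref{prop:mra:prop48analogon} reads $\hkl\phi(2\xi)=\beta^0(\xi)\delta(\xi)\hkl\phi(\xi)$, so the relation in (2) holds for $i=0$ as well and row $0$ of $U$ is the filter vector $(G(\xi+p))_{p}$ with $G=\gamma\delta$. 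First I would record that, for $\beta^i\in S$, the relation $\hkl\psi^i(2\xi)=\beta^i(\xi)\delta(\xi)\hkl\phi(\xi)$ is equivalent to $M_\lambda T^{(\lambda)}\psi^i\in V_1$ for all $\lambda$: by \eqref{eqn:radial:analogon43}, \eqref{eqn:hklVsDilation} and $S_0=\mathrm{const}$, membership of $M_0T^{(0)}\psi^i$ in $V_1=D_{1/2}V_0$ is, through \cref{lem:mra:analogonLemma45}, exactly the stated form of $\hkl\psi^i(2\xi)$, and the form then propagates to all $\lambda$ since $S_\lambda(2\,\cdot)\,\beta^i\in S$.

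The computational core is a polyphase identity for the quantities $P_{i,j}$ of \cref{prop:mra:onsCondition}. Evaluating at $2\xi$ and writing $2\xi+q=2(\xi+q/2)$ turns the sum over $q\in I$ into a sum over $\ell=q/2\in L$; splitting $L=\bigsqcup_{p\in L/I}(p+I)$ and using that $\beta^i,\beta^j$ and $|\delta|^2$ are $I$-periodic while $\sum_{q\in I}|\hkl\phi((\xi+p)+q)|^2=n!\,P_\phi(\xi+p)=n!$ (here $P_\phi=1$ because $\phi$ is orthonormal) yields
\begin{align*}
 P_{i,j}(2\xi)=\sum_{p\in L/I}(\beta^i\delta)(\xi+p)\,\overline{(\beta^j\delta)(\xi+p)}=\bigl(U(\xi)\,U(\xi)^{*}\bigr)_{ij}.
\end{align*}
By \cref{prop:mra:onsCondition}, orthonormality of the wavelet system is then equivalent to orthonormality of the rows $1,\dots,r-1$ of $U$. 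Dilating \cref{prop:mra:characterizeW1} by $D_{1/2}$ (so $W_{-1}$ becomes $W_0=V_1\ominus V_0$) characterises $W_0$ inside $V_1$ as the functions whose fibre vector is orthogonal to row $0$; applied to the generators this says that $M_\lambda T^{(\lambda)}\psi^i\in W_0$ for all $\lambda$ precisely when row $i$ is orthogonal to row $0$. Finally \cref{lem:mra:quadraturFilter} is exactly the statement that row $0$ has unit length, $\sum_{p\in L/I}|G(\xi+p)|^2=1$.

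For completeness I would set up the fibre picture as an isometry. Writing $V_1\cong S$ through $D_2$ and \cref{lem:mra:analogonLemma45}, and using $|\Delta(\xi+p)|^2=|\delta(\xi+p)|^2\,|\Delta(2\xi)|^2$ together with the coset splitting above, the map $g\mapsto v_g$, $v_g(\xi)=(\beta_g(\xi+p)\delta(\xi+p))_{p}$ (where $\beta_g\in S$ is determined by $\hkl g(2\xi)=\mathrm{const}\cdot\tfrac{\Delta(\xi)}{\pi(\xi)}\beta_g(\xi)\hkl\phi(\xi)$) is an isometry of $V_1$ onto the $\mathcal S_n$-adapted vector fields in $L^2([0,\pi[^n,|\Delta(2\,\cdot)|^2;\C^{L/I})$, under which $V_0$ maps to the line field spanned by row $0$ and $M_\lambda T^{(\lambda)}\psi^i$ maps to $S_\lambda(2\,\cdot)$ times row $i$. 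Since $\{S_\lambda(2\,\cdot):\lambda\in P_+\}$ is (after $\eta=2\xi$) an orthonormal basis of the scalar field space $L^2([0,\pi[^n,|\Delta(2\,\cdot)|^2)^{\mathcal S_n}$, the closed span of the wavelet system corresponds to the fields taking values in $\operatorname{span}\{\text{row }1,\dots,\text{row }(r-1)\}$, while $W_0$ corresponds to the fields taking values in the orthogonal complement of row $0$. Hence the system is an orthonormal basis of $W_0$ if and only if rows $1,\dots,r-1$ form an orthonormal basis of $(\text{row }0)^{\perp}$ in $\C^{L/I}$; together with the unit length of row $0$ this means rows $0,\dots,r-1$ are an orthonormal basis of $\C^{L/I}$, which forces $r=|L/I|=2^n$ and is precisely the a.e.\ unitarity of $U(\xi)$. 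Reading this equivalence in both directions proves the theorem.

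I expect the main obstacle to be this completeness half --- converting ``$\{M_\lambda T^{(\lambda)}\psi^i\}$ spans $W_0$'' into ``the rows of $U$ span $\C^{L/I}$'' and thereby forcing the count $r-1=2^n-1$ --- since it rests on the fibre isometry $V_1\cong L^2([0,\pi[^n,|\Delta(2\,\cdot)|^2;\C^{L/I})$ and on the tensor factorisation of each generator into the scalar field $S_\lambda(2\,\cdot)$ and the fixed fibre vector given by a row of $U$; the orthonormality and $W_0$-membership, by contrast, drop out directly from \cref{prop:mra:onsCondition}, the dilated \cref{prop:mra:characterizeW1} and \cref{lem:mra:quadraturFilter} via the polyphase identity. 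A technical point to treat with care throughout is the possible vanishing of $\hkl\phi$: the quotient implicitly defining $\beta^i$ and $\beta_g$ must be read on the full-measure set where the relevant fibre of $\hkl\phi$ is nonzero, which is exactly what $P_\phi=1$ guarantees.
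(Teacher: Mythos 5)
Your proposal is correct and follows essentially the same route as the paper: reduce to $W_{-1}$ (equivalently, work in $V_1\ominus V_0$) via dilation, use \cref{lem:mra:analogonLemma45} and \cref{prop:mra:characterizeW1} to encode membership as orthogonality of fibre vectors to the filter row, the polyphase identity from \cref{prop:mra:onsCondition} for orthonormality of the remaining rows, and \cref{lem:mra:quadraturFilter} for the normalization of row $0$. The only difference is one of exposition — you make the fibre isometry $V_1\cong L^2(D',|\Delta(2\,\cdot)|^2;\C^{L/I})$ and the resulting completeness/dimension count $r=|L/I|$ fully explicit, where the paper leaves this step terse — so the argument matches.
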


\begin{proof}
    Statement $(1)$ is equivalent to the statement that the $M_{\lambda}D_{2}T^{(\lambda)}\psi^{i}$ are an orthonormal basis of $D_{2}W_{0} = W_{-1}$. Using \cref{prop:mra:characterizeW1} we see $M_{0}D_{2}T^{(0)}\psi^{i}\in W_{-1}$ if and only if
    \begin{align*}
        \hkl(M_{0}D_{2}T^{(0)}\psi^{i})(\xi) = \frac{\Delta(\xi)\widetilde\beta^{i}(\xi)}{\pi(\xi)}\hkl\phi(\xi) 
    \end{align*}
    with certain $\widetilde\beta^{i}\in X$ such that $(\widetilde\beta^{i}(\xi+p)\delta(\xi+p))_{p\in L/I}\perp (\gamma(\xi+p)\delta(\xi+p))_{p\in L/I}$ almost everywhere. Recalling formula \eqref{eqn:radial:analogon43},    we calculate the left hand side of the previous equation as
    \begin{align*}
      \hkl(M_{0}D_{2}T^{(0)}\psi^{i})(\xi) &= D_{2^{-1}}\bigg(\frac{\Delta S_{0}}{\pi}\,\hkl\psi^{i}\bigg)(\xi) \\
        &= \text{const.}\, \frac{\Delta(2\xi)}{\pi(\xi)}\,\hkl\psi^{i}(2\xi).
    \end{align*}
    We conclude that $\,M_{0}D_{2}T^{(0)}\psi^{i}\in W_{-1}$ if and only if
    \begin{align}\label{eqn:mra:twoscaleWaveletRelation}
        \hkl\psi^{i}(2\xi) = \beta^{i}(\xi)\delta(\xi)\hkl\phi(\xi)
    \end{align}
    for some $\beta^{i}\in X$ such that $(\beta^{i}(\xi+p)\delta(\xi+p))_{p\in L/I} \perp (\gamma(\xi+p)\delta(\xi+p))_{p\in L/I}$ almost everywhere. By \cref{prop:mra:onsCondition} we see that $\{M_{\lambda}T^{(\lambda)}\psi^{i} : 1\leq i\leq r-1,\ \lambda\in P_{+}\}$ is an orthonormal system if and only if
    \begin{align*}
        \delta_{ij} &= \frac{1}{n!}\sum_{q\in I} \hkl\psi^{i}(2\xi+q)\overline{\hkl\psi^{j}(2\xi+q)}\\ 
                    &=\frac{1}{n!}\sum_{p\in L}\hkl\psi^{i}(2(\xi+p))\overline{\hkl\psi^{j}(2(\xi+p))}\\
                    &=\frac{1}{n!}\sum_{p\in L/I}\sum_{q\in I}\beta^{i}(\xi+p+q)\overline{\beta^{j}(\xi+p+q)}|\delta(\xi+q+p)|^{2}\,|\hkl\phi(\xi+p+q)|^{2}\\
                    &=\sum_{p\in L/I} \beta^{i}(\xi+p)\overline{\beta^{j}(\xi+p)}|\delta(\xi+p)|^{2}\,\frac{1}{n!}\sum_{q\in I}|\hkl\phi((\xi+p)+q)|^{2}\\
                    &=\sum_{p\in L/I}\beta^{i}(\xi+p)\overline{\beta^{j}(\xi+p)}|\delta(\xi+p)|^{2}
    \end{align*}
    almost everywhere. This is equivalent to the condition that the set 
    $$M\coloneqq\{(\beta^{i}(\xi+p)\delta(\xi+p))_{p\in L/I} : 1\leq i\leq r-1\}$$ 
    is an orthonormal system in $\C^{|L/I|}$ a.e. Furthermore, the vector $(\beta^{0}(\xi+p)\delta(\xi+p))_{p\in L/I}$ is almost everywhere  normalized  in $\C^{|L/I|}$ according to \cref{lem:mra:quadraturFilter}, and is orthogonal to $M$ according to \cref{prop:mra:characterizeW1}. We conclude that $r\leq |L/I|$ is a necessary condition by dimensionality. Moreover, if the $\psi^{i}$ satisfy  \eqref{eqn:mra:twoscaleWaveletRelation}, then a short calculation gives
    \begin{align*}
      \hkl(M_{\lambda}D_{2}T^{(\lambda)}\psi^{i})(\xi) = \text{const.}\cdot \frac{\Delta(\xi)S_{\lambda}(2\xi)\beta^{i}(\xi)}{\pi(\xi)}\hkl\phi(\xi).
    \end{align*}
    So indeed, $M_{\lambda}D_{2}T^{(\lambda)}\psi^{i}\in V_{0}\, $ for all $\lambda\in P_{+}$ by \cref{lem:mra:analogonLemma45}. As $S_{\lambda}(2\,\cdot)$ is $L$-periodic, we have 
    \begin{align*}
      \bigl(S_{\lambda}(2(\xi+p))\beta^{i}(\xi+p)\delta(\xi+p)\bigr)_{p\in L/I} \in \C\, \bigl(\beta^{i}(\xi+p)\delta(\xi+p)\bigr)_{p\in L/I}  \end{align*} 
    and thus $M_{\lambda}D_{2}T^{(\lambda)}\psi^{i}\in W_{-1}\subseteq V_{0}$ by \cref{prop:mra:characterizeW1}. We conclude that $r=|L/I|$ is also  a necessary condition.
\end{proof}

\begin{corollary}
    Every radial MRA $(V_j)_{j\in \mathbb Z}$ with orthonormal scaling function $\phi$ admits an orthonormal wavelet basis consisting of $r-1$ wavelets, where $r=|L/I| = 2^{n}$, i.e. there are functions $\psi^{1},\ldots,\psi^{r-1}\in L^2(\overline{\a_{+}}, \omega)$ such that
    \begin{align*}
        \{M_{\lambda}D_{2^{-j}}T^{(\lambda)}\psi^{i}\ :\ 1\leq i\leq r-1,\ \lambda\in P_{+}\} 
    \end{align*}
    is an orthonormal basis of the complementary space $W_{j}$.
\end{corollary}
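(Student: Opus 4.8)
The plan is to derive the statement from \cref{thm:mra:orthogonaleWavelets}, which reduces the problem to exhibiting functions $\beta^{1},\ldots,\beta^{r-1}\in S$ so that, with $\beta^{0}\coloneqq\gamma$, the matrix $\bigl(\beta^{i}(\xi+p)\delta(\xi+p)\bigr)_{0\le i\le r-1,\,p\in L/I}$ is unitary for almost all $\xi$; the wavelets are then defined by $\mathcal H\psi^{i}(2\xi)=\beta^{i}(\xi)\delta(\xi)\mathcal H\phi(\xi)$. I would first pass to the collated functions $\Gamma^{i}\coloneqq\beta^{i}\delta$, turning the matrix into $\mathcal U(\xi)\coloneqq\bigl(\Gamma^{i}(\xi+p)\bigr)_{i,p}$ with prescribed first row $\Gamma^{0}=\gamma\delta=G$, the filter function. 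Since $\delta$ is $\mathcal S_{n}$-invariant ($\Delta$ being alternating) and $|\Delta|^{2}/|\delta|^{2}=|\Delta(2\,\cdot)|^{2}$ is $I$-periodic, the assignment $\Gamma^{i}\mapsto\beta^{i}\coloneqq\Gamma^{i}/\delta$ sends any $\mathcal S_{n}$-invariant, $I$-periodic and essentially bounded $\Gamma^{i}$ to an element of $S$, because $\int_{D}|\beta^{i}|^{2}|\Delta|^{2}=\int_{D}|\Gamma^{i}|^{2}|\Delta(2\,\cdot)|^{2}<\infty$. By \cref{lem:mra:quadraturFilter} the first row $\bigl(G(\xi+p)\bigr)_{p\in L/I}$ has unit length for almost every $\xi$, so the task becomes to complete this unit row to a unitary matrix $\mathcal U(\xi)$ whose remaining rows again arise from $\mathcal S_{n}$-invariant, $I$-periodic and bounded functions.

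The completion must be carried out measurably in $\xi$ and compatibly with the permutation symmetry. I would record that $\mathcal S_{n}$ preserves $L$ and $I$, hence acts on $\R^{n}/L$ and permutes the cosets $L/I$; writing $P_{w}$ for the permutation matrix of $w\in\mathcal S_{n}$ on $L/I$, the $\mathcal S_{n}$-invariance of the $\Gamma^{i}$ translates into the equivariance $\mathcal U(w\xi)=\mathcal U(\xi)P_{w}$, and the datum $\bigl(G(w\xi+p)\bigr)_{p}=\bigl(G(\xi+p)\bigr)_{p}P_{w}$ transforms in the same way. As right multiplication by $P_{w}$ is a column permutation and hence preserves unitarity, both the constraint and the target are $\mathcal S_{n}$-equivariant. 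On the full-measure set where $\mathcal S_{n}$ acts freely on $\R^{n}/L$, I would fix a measurable fundamental domain and there apply the classical completion lemma (in the spirit of \cite{wojtaszczyk}): the multifunction sending $\xi$ to the nonempty closed set of unitary matrices with first row $\bigl(G(\xi+p)\bigr)_{p}$ admits a measurable selector by the Kuratowski--Ryll-Nardzewski theorem. Extending this selector to almost all of $\R^{n}/L$ through $\mathcal U(w\xi)\coloneqq\mathcal U(\xi)P_{w}$ is unambiguous by freeness and yields $\mathcal S_{n}$-invariant functions $\Gamma^{1},\ldots,\Gamma^{r-1}$, read off from the rows of $\mathcal U$.

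It then remains to assemble the pieces. By construction the $\Gamma^{i}$ are $I$-periodic, $\mathcal S_{n}$-invariant and bounded by $1$ (being entries of unitary matrices), so $\beta^{i}\coloneqq\Gamma^{i}/\delta$ lie in $S$ by the integrability observation above, and $\bigl(\beta^{i}(\xi+p)\delta(\xi+p)\bigr)_{i,p}=\mathcal U(\xi)$ is unitary a.e.\ with first row induced by $\beta^{0}=\gamma$. Defining $\psi^{i}$ through $\mathcal H\psi^{i}(2\xi)=\Gamma^{i}(\xi)\mathcal H\phi(\xi)$, the boundedness of $\Gamma^{i}$ together with the unitarity of the dilation $D_{a}$ shows $\psi^{i}\in L^{2}(\overline{\a_{+}},\omega)$. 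Now \cref{thm:mra:orthogonaleWavelets} applies and identifies $\{M_{\lambda}T^{(\lambda)}\psi^{i}:1\le i\le r-1,\ \lambda\in P_{+}\}$ as an orthonormal basis of $W_{0}$; applying the unitary dilations and using $W_{j}=D_{2^{-j}}W_{0}$ gives the corresponding bases of all $W_{j}$.

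I expect the main obstacle to be precisely this \emph{equivariant} measurable unitary completion. One cannot invoke a canonical completion (such as a Householder reflection relative to a fixed basis vector), since the $\mathcal S_{n}$-action permutes the coordinates of $\C^{r}$ and destroys any reference to a distinguished coordinate; the symmetry has to be absorbed by working on a fundamental domain of the a.e.\ free action and extending equivariantly. A second point requiring care, and needed for the completion to be meaningful, is to verify that for almost every $\xi$ the points $\xi+p$, $p\in L/I$, lie in pairwise distinct $\mathcal S_{n}$-orbits modulo $I$ --- otherwise two columns of $\mathcal U(\xi)$ would be forced to coincide by invariance. This, like the freeness of the action, follows because $(w-\mathrm{id})\xi\in L$ can hold only on a Lebesgue-null set for each $w\neq\mathrm{id}$.
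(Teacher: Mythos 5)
Your proposal follows essentially the same route as the paper: reduce via \cref{thm:mra:orthogonaleWavelets} to completing the filter row $\bigl(G(\xi+p)\bigr)_{p\in L/I}$ (unit length a.e.\ by \cref{lem:mra:quadraturFilter}) to a measurably varying unitary matrix, doing the completion on a fundamental domain of the a.e.\ free $\mathcal S_{n}$-action and extending $\mathcal S_{n}$-invariantly and $I$-periodically. Your two points of care --- the measurable selection and the check that the points $\xi+p$, $p\in L/I$, lie a.e.\ in pairwise distinct $\mathcal S_{n}\ltimes I$-orbits, so that invariance does not force columns to coincide --- are exactly what the paper leaves implicit, and your justification of the latter is correct. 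One bookkeeping slip: $\delta=\Delta/\Delta(2\,\cdot)$ satisfies $\delta(\xi+2\pi e_{k})=(-1)^{n-1}\delta(\xi)$, so for even $n$ neither $\delta$ nor $G=\gamma\delta$ is $I$-periodic (only $|\delta|$ and $|G|$ are), and your implication ``$\Gamma^{i}$ $I$-periodic and bounded $\Rightarrow\beta^{i}=\Gamma^{i}/\delta\in S$'' cannot hold simultaneously with $\Gamma^{0}=G$ being $I$-periodic. The paper absorbs this by inserting the phase $\alpha(\xi)=e^{-i\frac{n-1}{2}\langle\xi,\underline 1\rangle}$: it completes the genuinely $I$-periodic row $\eta^{0}=\alpha(-\cdot)G$ and sets $\beta^{i}=\alpha\delta^{-1}\eta^{i}$, the point being that the resulting column phases $\alpha(\xi+p)/\alpha(\xi)$ are unimodular constants and do not affect unitarity. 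With that phase correction your argument closes.
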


\begin{proof} Recall that $\,\Delta(x)=\alpha(x)A_{\delta}(e^{ix})$
with the phase factor $\,\alpha(\xi)=e^{-i\frac{n-1}{2}\langle \xi, \underline 1\rangle }.$ Hence $\alpha\Delta$ is $I$-periodic. 
  We now put $\,\eta^{0}(\xi)=\alpha(-\xi)G(\xi)$ with the filter function $G$ associated to $\phi$ according to formula \eqref{filter}.  We next choose functions $\eta^{i}\in L^{2}(D\cap\overline{\a_{+}}), 1\leq i\leq r-1$, in such a way that the matrix
    \begin{align*}
        (\eta^{i}(\xi+p))_{0\leq i\leq r-1,\, p\in L/I}\in\C^{r\times r} 
    \end{align*}
    is unitary for almost all $\xi\in D\cap\overline{\a_{+}}\,$; here again $D = [0, 2\pi[^n.$ 
    This amounts to constructing a unitary matrix with a given first row in a measurable way.  
    We then extend the $\eta^{i}$ to all of $D$ via $\eta^{i}(wx)\coloneqq \eta^{i}(x)$ for $w\in\mathcal S_{n}$ and then to $\R^{n}$ in an $I$-periodic fashion. 
    We thus obtain functions $\beta^{i}\coloneqq \alpha\delta^{-1}\eta^{i}\in X$ such that the matrix $A=(A_{ip})_{0\leq i\leq r-1,\, p\in L/I}$ with entries
    \begin{align*}
      A_{ip} = \beta^{i}(\xi+p)\delta(\xi+p) = \alpha(\xi)\eta^i(\xi+p)
    \end{align*}
    is almost everywhere unitary. In particular, $\beta^{0}\delta = G$.
    We then define $\psi^{i}  \in  L^2(\overline{\a_{+}}, \omega)$ via its Hankel transform by 
    \begin{align*}
      \hkl\psi^{i}(2\xi) \coloneqq \beta^{i}(\xi)\delta(\xi)\hkl\phi(\xi). 
    \end{align*}
\end{proof}

%----------------------------------------------------------------------------------------
%	Scaling Function
%----------------------------------------------------------------------------------------

\section{Construction of radial scaling functions}\label{scaling}

It remains to discuss how a radial MRA can be actually obtained,  i.e. how  candidates for radial scaling functions can be found. 
As our lattice is  $\,I=2\pi\Z^{n}$, we can tile $\R^{n}$ with copies of $D=[0, 2\pi[^{n}$ along periods of $e^{i\langle \xi, \cdot\rangle}$, which allows us to interlock \cref{prop:mra:analogonProp42} with its classical analogue for the Euclidean Fourier transform $\widehat  f(\xi) = (2\pi)^{-n/2}\int_{\mathbb R^n} e^{-i\langle \xi,x\rangle}dx $ on $L^2(\mathbb R^n).$ 

\begin{theorem}\label{thm:mra:konstruktionSkalenfktHerm}
  Suppose $\phi_{\a}\in L^{2}(\R^{n})$ is a classical scaling function for a dyadic MRA in $L^2(\mathbb R^n)$  which is $\mathcal S_{n}$-invariant and such that its classical Fourier transform $\widehat\phi_{\a}$ is continuous at $0$ and satisfies $\widehat\phi_{\a}\in L^2(\overline{\a_{+}}, \omega)$. Then
    \begin{align}\label{eqn:mra:konstruktionSkalenfunktion}
      \hkl\phi(\xi)\coloneqq (2\pi)^{\frac{n}{2}} e^{-i\frac{n-1}{2}\langle \xi, \underline 1\rangle} \widehat\phi_{\a}(\xi) 
    \end{align}
    defines a radial scaling function $\phi\in L^2(\overline{\a_{+}}, \omega)$, i.e. a scaling function for a radial MRA in $\text{Herm(n)}$.  \newline\hspace*{6mm}Conversely, if $\phi\in L^2(\overline{\a_{+}}, \omega)$ is a radial scaling function such that $\hkl\phi\in L^{2}(\R^{n})$ and $\hkl\phi$ is continuous at $0$, then the function $\phi_{\a}$ defined by \cref{eqn:mra:konstruktionSkalenfunktion} is a classical scaling function on $\R^{n}$ which is $\mathcal S_{n}$-invariant.
    \newline\hspace*{6mm}Moreover, $\phi$ is an orthonormal scaling function if and only if $\phi_{\a}$ is an orthonormal classical scaling function.
\end{theorem}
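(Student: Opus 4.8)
The plan is to transfer each defining property of a radial MRA through the identity \eqref{eqn:mra:konstruktionSkalenfunktion} into the corresponding property of a classical dyadic MRA, exploiting that both theories use the same frequency period lattice $I=2\pi\Z^n$. Throughout write $\alpha(\xi)=e^{-i\frac{n-1}{2}\langle\xi,\underline 1\rangle}$, so that $\hkl\phi=(2\pi)^{n/2}\alpha\,\widehat\phi_\a$; recall $|\alpha|\equiv 1$, that $\alpha$ is $\mathcal S_n$-invariant, and that $\Delta(\xi)=\alpha(\xi)A_\delta(e^{i\xi})$. Two elementary identities drive the whole argument. First, $|\hkl\phi(\xi)|^2=(2\pi)^n|\widehat\phi_\a(\xi)|^2$, so that $P_\phi$ is a fixed positive multiple of the classical periodization $\sum_{q\in I}|\widehat\phi_\a(\cdot+q)|^2$. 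Second, because $\alpha(2\xi)=\alpha(\xi)^2$, the refinement quotient collapses to $\hkl\phi(2\xi)/\hkl\phi(\xi)=\alpha(\xi)\,\widehat\phi_\a(2\xi)/\widehat\phi_\a(\xi)$.

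For the forward implication I would check the hypotheses of \cref{thm:mra:schnittSkalenrme} in turn. Condition (RB) follows at once from \cref{prop:mra:analogonProp42}: a classical scaling function has $\{\phi_\a(\cdot-k)\}_{k\in\Z^n}$ a Riesz basis, which by the classical analogue of \cref{prop:mra:analogonProp42} means its periodization is bounded above and below a.e., and the first identity transports these bounds to $P_\phi$. For nesting I use classical refinability $\widehat\phi_\a(2\xi)=m_0(\xi)\widehat\phi_\a(\xi)$ with an $I$-periodic symbol $m_0$, which may be taken $\mathcal S_n$-invariant since $\widehat\phi_\a$ is; then the second identity gives $\hkl\phi(2\xi)=\alpha(\xi)m_0(\xi)\hkl\phi(\xi)$, and setting $\gamma(\xi):=\frac{\Delta(2\xi)}{\Delta(\xi)}\alpha(\xi)m_0(\xi)$ verifies condition (3) of \cref{prop:mra:prop48analogon}. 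Finally $\widehat\phi_\a(0)\neq 0$ for a classical scaling function, hence $\hkl\phi(0)\neq 0$, while $|\hkl\phi|$ inherits continuity at $0$; \cref{thm:mra:schnittSkalenrme} then shows $\phi$ is a radial scaling function.

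The converse runs the same chain in reverse. Given a radial scaling function with $\hkl\phi\in L^2(\R^n)$ continuous at $0$, I define $\widehat\phi_\a:=(2\pi)^{-n/2}\overline\alpha\,\hkl\phi\in L^2(\R^n)$, which is $\mathcal S_n$-invariant, recover the classical Riesz bound from (RB), extract the classical symbol $m_0:=\overline\alpha\,\frac{\Delta(\xi)}{\Delta(2\xi)}\gamma$ from the radial two-scale relation, and obtain $\widehat\phi_\a(0)\neq 0$ from \cref{thm:mra:schnittSkalenrme}; the Euclidean Mallat--Meyer theorem (the classical analogue of \cref{thm:mra:schnittSkalenrme}) then certifies that $\phi_\a$ is a classical, $\mathcal S_n$-invariant scaling function. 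The orthonormality equivalence is \cref{kor:mra:analogonKorr43} read against its classical counterpart: $P_\phi\equiv 1$ a.e. is, through the first identity, equivalent to the classical periodization being the constant that characterises orthonormality of the integer translates, the factor $(2\pi)^{n/2}$ in \eqref{eqn:mra:konstruktionSkalenfunktion} being calibrated precisely so that these two normalisations coincide.

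The main obstacle is the nesting step, specifically showing $\gamma\in S$. Its $\mathcal S_n$-invariance and membership in $L^2(\mathbb T^n,|\Delta|^2)$ are routine, since $\Delta(2\xi)/\Delta(\xi)=\prod_{i<j}2\cos\frac{\xi_i-\xi_j}{2}$ is bounded and symmetric and $m_0$ is square-integrable; the delicate point is $I$-periodicity, for neither $\Delta(2\cdot)/\Delta$ nor $\alpha$ is $I$-periodic. Under $\xi\mapsto\xi+2\pi k$ the first is multiplied by $(-1)^{\sum_{i<j}(k_i-k_j)}$ and the second by $(-1)^{(n-1)\sum_m k_m}$, and the key computation is that both exponents have the parity of $(n+1)\sum_m k_m$, so the signs cancel and $\gamma$ is genuinely $I$-periodic. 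I would isolate this cancellation as a one-line lemma and mirror it in the converse; the remaining work --- the exact normalising constant and the invocation of the classical analogues of \cref{prop:mra:analogonProp42} and \cref{thm:mra:schnittSkalenrme} --- is standard.
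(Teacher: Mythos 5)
Your proposal is correct and follows essentially the same route as the paper's proof: transfer the Riesz bounds via Proposition 5.7 of \cite{wojtaszczyk} and \cref{prop:mra:analogonProp42}, turn the classical symbol $m_0$ into $\gamma=\alpha\,\delta^{-1}m_0\in S$ to get the two-scale relation of \cref{prop:mra:prop48analogon}, and invoke \cref{thm:mra:schnittSkalenrme} (resp.\ its classical counterpart) together with $A=B=1$ for the orthonormality statement. The $I$-periodicity of $\gamma$, which you establish by the parity computation, is in the paper immediate from the identity $\Delta(\xi)=\alpha(\xi)A_{\delta}(e^{i\xi})$, since both $\alpha/\Delta=A_{\delta}(e^{i\cdot})^{-1}$ and $\Delta(2\,\cdot)$ are manifestly $I$-periodic.
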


\begin{proof}
  Suppose $\phi_{\a}$ is a classical scaling function which is $\mathcal S_{n}$-invariant and such that $\widehat\phi_{\a}$ is continuous at $0$. Note that the $\mathcal S_{n}$-invariance implies that $\widehat\phi_{\a}$ is  $\mathcal S_{n}$-invariant as well, and thus definition \eqref{eqn:mra:konstruktionSkalenfunktion} is meaningful. 
    According to Propos. 5.7. in \cite{wojtaszczyk}  there are constants $0<A\leq B<\infty$ such that
    \begin{align}\label{schachtel}
        \frac{A}{(2\pi)^{n}}\leq \sum_{l\in\Z^{n}} |\widehat \phi_{a}(\xi+2\pi l)|^{2} \leq \frac{B}{(2\pi)^{n}} \quad \text{a.e.}.
    \end{align}
    Moreover,  $\phi_{\a}$ is an orthonormal (classical) scaling function if and only if $A=B=1$. Since $I=2\pi \Z^{n}$, formula \eqref{schachtel} can be written as 
    \begin{align*}
        A \leq \sum_{q\in I} |\hkl \phi(\xi+q)|^{2} \leq B \quad \text{a.e.}.
    \end{align*}
    So we can invoke \cref{prop:mra:analogonProp42} and \cref{kor:mra:analogonKorr43} to see that $B_{\phi}\coloneqq\{M_{\lambda}T^{(\lambda)}\phi :\lambda\in P_{+}\}$ forms a Riesz basis of $V_{0}\coloneqq \overline{\text{span}\, B_{\phi}}$, and that this basis is orthonormal if and only if $\phi_{\a}$ is orthonormal. Further, by Lemma 5.8 in \cite{wojtaszczyk} there exists  a $2\pi\Z^{n}$-periodic function $m$ on $\R^{n}$ with $m|_{D}\in L^{2}(D)$ and such that $\widehat\phi_{\a}(2\xi) = m(\xi)\widehat\phi_{\a}(\xi)$. Since $\widehat\phi_{\a}$ was assumed to be $\mathcal S_{n}$-invariant, $m$ has to be $\mathcal S_{n}$-invariant as well.
    We again introduce the phase factor $\alpha(\xi)=e^{-i\frac{n-1}{2}\langle \xi,\underline{1}\rangle}$ and define $\,\gamma\coloneqq\alpha\delta^{-1}m\in S.\,$ We obtain
    \begin{align*}
      \Delta(2\xi) \hkl\phi(2\xi) &= \Delta(2\xi) \alpha(2\xi)(2\pi)^{\frac n 2} m(\xi) \widehat\phi_{\a}(\xi) \\
                                  &= \frac{\Delta(2\xi)\alpha(2\xi)\alpha(\xi)^{-1}m(\xi)}{\Delta(\xi)}\Delta(\xi)\hkl\phi(\xi)\\ 
                                  &= \gamma(\xi) \Delta(\xi)\hkl\phi(\xi),
    \end{align*}
    which is just the two-scale relation from \cref{prop:mra:prop48analogon}. As $\widehat\phi_{\a}$ is continuous at $0$, we get $\widehat\phi_{a}(0)\neq 0$ and thus $\hkl \phi(0) \neq 0$, see \cite{daubechies1992ten}, Remark 3 on p. 144. Indeed, we have seen this reasoning already in the proof of \cref{thm:mra:schnittSkalenrme}. Hence, the conditions of \cref{thm:mra:schnittSkalenrme} are satisfied and we obtain that $\phi$ is indeed a radial scaling function.

    Conversely, suppose that $\hkl\phi\in L^{2}(\R^{n})$.  We proceed as before and note that
    \begin{align*}
      m(\xi) = \gamma(\xi)\delta(\xi)\alpha(\xi)^{-1} = \alpha(-\xi)G(\xi).
    \end{align*}
    By \cref{lem:mra:quadraturFilter}, this shows that  
$m\in L^{2}(D).$
\end{proof}

\begin{example} Es an example, we consider the radial analogue of the Shannon wavelets. Again, we work with  the phase factor $\alpha(\xi)\coloneqq e^{-i\frac{n-1}{2}\langle \xi, \underline 1\rangle}$.
Consider $\phi_{\a}$ defined via its classical Fourier transform $\widehat\phi_{\a}=(2n)^{-n/2}\chi_{[-\pi,\pi]^{n}}$. Then by \cite{wojtaszczyk}, Thm. 2.13, Prop. 5.7 and Lem. 5.8, 
$\phi_{\a}$ is a classical orthonormal scaling function  which satisfies the conditions of our \cref{thm:mra:konstruktionSkalenfktHerm}.  Therefore  $\,\hkl\phi(\xi) \coloneqq \alpha(\xi)\chi_{[-\pi,\pi]^{n}}(\xi)\,$ defines an orthonormal radial scaling function $\phi$. Note that
\begin{align}\label{eqn:scaling:scalingTwoScale}
    \hkl\phi(2\xi) = \alpha(\xi)\chi_{Q}(\xi)\hkl\phi(\xi) = \alpha(2\xi)\chi_{Q\,\cap\,[-\pi,\pi]^{n}}(\xi)
\end{align}
with the union of cubes
\begin{align*}
    Q \coloneqq Q^{0} \coloneqq \bigcup_{l\in\Z^{n}}(2\pi l + [-\pi/2, \pi/2]^{n}). 
\end{align*}
Now define functions $\,\beta^{i} = (\alpha\delta)^{-1}\chi_{Q^{i}}\in X$ with
\begin{align*}
    Q^{i} = q^{i}+Q, 
\end{align*}
where $q^{i}$ runs through the non-trivial representatives of $L/I$, i.e. through the set $\{0, \pi\}^{n}\setminus \{(0,\ldots, 0)\}$. Then in view of 
\cref{thm:mra:orthogonaleWavelets}, one obtains an orthonormal radial wavelet basis $(\psi^{i})$ for $\text{Herm}(n)$ by defining
\begin{align}\label{eqn:scaling:waveletTwoScale}
    \hkl\psi^{i}(2\xi) \coloneqq \beta^{i}(\xi)\delta(\xi)\hkl\phi(\xi)=\chi_{Q^{i}\cap [-\pi, \pi]^{n}}(\xi).
\end{align}

%%%%%%%%%%% Drawing TIKZ picture

\begin{figure}[H]
    \centering
    %%%%%%%%%%%%% Q0
    \begin{minipage}{0.18\textwidth}
        \scalebox{0.6}{
        \begin{tikzpicture}
            \draw[step=0.5, lightgray] (0.5,0.5) grid +(2.0, 2.0);
            \draw (0.5,0.5) rectangle +(2.0,2.0);
            \fill (1, 1) rectangle +(1, 1);
            \node[align=center] at (1.5, 0) {$Q^{0}\cap [-\pi,\pi]^{2}=[-\pi/2, \pi/2]^{2}$};
        \end{tikzpicture}
        }
    \end{minipage}
    \hfill
    %%%%%%%%%%%%%%%%% Q1
    \begin{minipage}{0.18\textwidth}
        \scalebox{0.6}{
        \begin{tikzpicture}
            \draw[step=0.5, lightgray] (0.5,0.5) grid +(2.0, 2.0);
            \draw (0.5,0.5) rectangle +(2.0,2.0);
            \fill (1, 2) rectangle +(1, 0.5);
            \fill (1, 0.5) rectangle +(1, 0.5);
            \node[align=center] at (1.5, -0) {$Q^{1}\cap [-\pi,\pi]^{2}$ with $q^{1}=(0, \pi)$};
        \end{tikzpicture}
        }
    \end{minipage}
    \hfill
    %%%%%%%%%%%%% Q2
    \begin{minipage}{0.18\textwidth}
        \scalebox{0.6}{
        \begin{tikzpicture}
            \draw[step=0.5, lightgray] (0.5,0.5) grid +(2.0, 2.0);
            \draw (0.5,0.5) rectangle +(2.0,2.0);
            \fill (0.5, 1) rectangle +(0.5, 1);
            \fill (2, 1) rectangle +(0.5, 1);
            \node[align=center] at (1.5, -0) {$Q^{2}\cap [-\pi,\pi]^{2}$ with $q^{2}=(\pi, 0)$};
        \end{tikzpicture}
        }
    \end{minipage}
    \hfill
    %%%%%%%%%%%% Q3
    \begin{minipage}{0.18\textwidth}
        \scalebox{0.6}{
        \begin{tikzpicture}
            \draw[step=0.5, lightgray] (0.5,0.5) grid +(2.0, 2.0);
            \draw (0.5,0.5) rectangle +(2.0,2.0);
            \fill (0.5, 0.5) rectangle +(0.5, 0.5);
            \fill (2, 0.5) rectangle +(0.5, 0.5);
            \fill (0.5, 2) rectangle +(0.5, 0.5);
            \fill (2, 2) rectangle +(0.5, 0.5);
            \node[align=center] at (1.5, -0) {$Q^{3}\cap [-\pi,\pi]^{2}$ with $q^{3}=(\pi, \pi)$};
        \end{tikzpicture}
        }
    \end{minipage}

    \caption{The radial Shannon wavelets in the rank $n=2$ case. The sets $Q^{i}\cap[-\pi, \pi]^{2}$ correspond to the scaling function $\phi$ ($i=0$) and the three wavelets $\psi^{i} (i=1,\ldots, 3)$ via \cref{eqn:scaling:scalingTwoScale} and \cref{eqn:scaling:waveletTwoScale}.}
\end{figure}
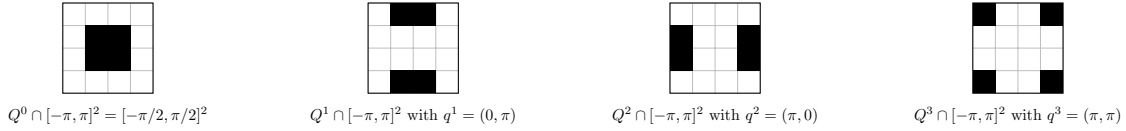
\end{example}

%----------------------------------------------------------------------------------------
%	Appendix
%----------------------------------------------------------------------------------------

\section{Appendix: Generalizations}\label{appendix}

Many of the previous results can be generalized in terms of Lie theory. Instead of the action of $U(n)$ on $\text{Herm}(n)$ by conjugation we may consider an arbitrary compact connected Lie group $K$ which acts on $\mathfrak{p}=i\text{Lie}(K)\subseteq\text{Lie}(K)_{\C}$ via the adjoint representation as a group of orthogonal transformations with respect to some $K$-invariant inner product on $\mathfrak{p}$.   First, we replace $\R^{n}$ by a maximal abelian subspace $\a = i\text{Lie}(T)\subseteq\mathfrak{p}$ coming from a maximal torus $T\subseteq K$ and $\R^{n}_{0}$ by $\a_{0}$, the span of the roots of $K$.
The Weyl integration formula holds in this generality (cf. \cite{gangolliVaradarajan}, Eq. (2.4.22)). Further, $\overline{\a_{+}}$ is now the positive Weyl chamber associated with a set of simple positive roots $\alpha_{1},\ldots, \alpha_{\dim\a_{0}}$. As usual, we identify $\a$ with $\a^{*}$ using a $K$-invariant inner product on $\mathfrak{p}$.
The associated Weyl group $W$ replaces the symmetric group $\mathcal S_{n}$ and $\pi$ becomes the usual alternating polynomial $\pi=\prod_{i=1}^{q}\alpha_{i}$  with respect to the positive roots $\alpha_{1},\ldots,\alpha_{q}$. Again, we define a generalized translation by averaging the translation along adjoint orbits of $K$ in $\mathfrak{p}.$ 

The Weyl character formula yields a natural choice for the trigonometric polynomials $S_{\lambda}$, which are indexed by the set of dominant weights ${P}_{+}$ of $K$ with respect to $T$. On the other hand the Harish-Chandra-Integral (\cite{harish1957differential}) allows us to express the spherical functions $J(\,.\,,iy)$ of $K\ltimes \mathfrak{p}/K$ in a similar fashion. Thus interlocking these two allows for nice properties of the $S_{\lambda}$ with respect to the generalized translation.
More precisely, we obtain
\begin{align}\label{S_Bessel_Reduktiv}
  S_{\lambda}(x) = \frac{1}{i^{q}\sqrt{|W|}\Delta(x)} \sum_{w\in W}\sign(w)e^{i(\lambda+\rho)(wx)} = M_{\lambda} \frac{\pi(x)}{\Delta(x)} J(x, i(\lambda+\rho))
\end{align}
as a generalization of \cref{S_Bessel}.
Here, $s_{\lambda}\coloneqq i^q\sqrt{|W|}S_{\lambda}$ is the trigonometric polynomial associated to the highest weight $\lambda$ in the Weyl character formula (\cite{knapp}, Thm. 5.113), $\Delta$ denotes the Weyl denominator and $M_{\lambda}=\text{const.}\cdot \pi(\lambda+\rho)$ is a suitable normalization constant determined by the Harish-Chandra-Integral (see for example \cite{mcswiggen}, Eq. 3.3).
Again, the Peter-Weyl theorem and the Schur orthogonality relations ensure that we obtain an orthonormal basis. 

Using a structure theorem for compact connected Lie groups (\cite{procesi}, Ch. 10, \S 7.2, Thm. 4), we can decompose $K=(L\times H)/C$ with $L$ compact, connected and semisimple, a torus $H$ and a finite subgroup $C$ of the center of $L\times H$ intersecting $H$ trivially. This allows us to reduce to the semisimple case as in the proof of \cref{prop:dichteTranslation} and apply the results of \cite{graczykSawyer_complex} to find the density of the translation. Note that $\a_{0}$ is the orthogonal complement of $i\text{Lie}(H)$ in $\a$ as $L, L\times H$ and $K$ all have the same roots.
We now take $I=i\ker(\exp\colon\text{Lie}(T)\to T)$ as the integral lattice of $T$ and replace $D$ by a fundamental domain of the torus $\a/I$. After adjusting these notations, the arguments up until \cref{thm:mra:orthogonaleWavelets} will still work. A small caveat is to see that the Weyl group acts on the integral lattice in order to see for example the $\mathcal S_n$-invariance of $P_{\phi}$ in \cref{prop:mra:analogonProp42}. However, employing the analytic Weyl group $W\cong N_{K}(T)/T$, we immediatly verify 
\begin{align*}
    \exp(\text{Ad}(k)x)=k\exp(x)k^{-1}=e
\end{align*}
for $w=\text{Ad}(k)\in W$ and $x\in\ker(\exp\text{Lie}(T)\to T)$.
After \cref{orthoWave}, however, the geometry will not be as compatible as in the case $K=U(n)$. For example, it is not clear how to obtain radial scaling functions as the lattice $I$ will generally not behave well with the periodicy of $e^{i\langle \xi,\cdot\rangle }$, which was crucial in the proof of \cref{thm:mra:konstruktionSkalenfktHerm}. 

In the case where $K=SU(2)$ acts on $ \text{SHerm}(2)$ by conjugation, this generalization reduces to the situation of \cite{roslerRauhut}, which can be seen as follows: We identify $\a = \text{SHerm}(2)\cong\R^{2}_{0}\cong\R$, so that $W=\{\pm 1\}, \pi(t)=2t, \Delta(t)=e^{it}-e^{-it}$ and $\overline{\a_{+}} = [0,\infty[$. Using these identifications and the fact that $K$ is of rank 1, the function $T$ is simply the indicator function $\chi_{[0,\infty[}$. Thus \cref{prop:dichteTranslation} reduces to
\begin{align*}
    (\delta_{r}*\delta_{s})(f)&=\frac{1}{2rs}\int_{0}^{\infty}f(t)t \sum_{i, j\in\{0, 1\}} (-1)^{i}(-1)^{j}T((-1)^{i}r+(-1)^{j}s - t)\ dt\\
    &=\frac{1}{2rs}\int_{|r-s|}^{r+s}f(t)t\ dt,
\end{align*}
which agrees with formula $(4.1)$ of \cite{roslerRauhut}. A direct calculation shows that ${P}_{+} = \frac 1 2\N_{0}$, so that $S_{\lambda}(t)=\sqrt{2}\Delta(t)^{-1}\sin((k+1)t)$ for $\lambda=\tfrac 1 2 k\in P_{+}$. 
This agrees, up to the factor $\Delta(t)^{-1}$, with the definition in \cite{roslerRauhut}, \S 4. In fact all our statements in this paper, such as for example the definition of the set $X$ (called $S$ in \cite{roslerRauhut}), are modified by this factor. This is a technical modification to accomodate for the fact that for non-semisimple Lie groups the Weyl vector $\rho$ is not necessarily contained in the weight lattice $P$. 
By re-introducing the factor $\Delta^{-1}$ we compensate a possible loss of $I$-periodicity of the functions $\Delta S_{\lambda}$.

It is a specialty of this rank $1$ situation that the weight lattice and the integral lattice align nicely, which made the explicit construction of orthonormal wavelets in \cite{roslerRauhut} work.

\end{document}